\def\de{\delta}
\def\R{\mathbb{R}}
\def\p{\phi}
\def\vp{\varphi}
\def\eto{\eta^\circ}
\def\po{\phi^\circ}
\def\pso{\psi^\circ}
\def\Div{\textup{div}}
\def\dist{\textup{dist}}
\def\spt{\textup{Supp}\,}
\def\e{\varepsilon}
\renewcommand{\H}{\mathcal{H}}
\newcommand{\beq}{\begin{equation}}
\newcommand{\eeq}{\end{equation}}
\newcommand{\pa}{\partial}
\newcommand{\noop}[1]{} %% does nope
\newcommand{\Pozar}{Po{\v{z}}{\'a}r}
\theoremstyle{plain}
\newtheorem{theorem}{Theorem}[section]
\newtheorem{proposition}[theorem]{Proposition}
\newtheorem{corollary}[theorem]{Corollary}
\newtheorem{lemma}[theorem]{Lemma}
\newtheorem{definition}[theorem]{Definition}
\theoremstyle{remark}
\newtheorem{remark}[theorem]{Remark}
\newcommand{\dd}{d}
\newcommand{\N}{\mathbb N}
\newcommand{\Z}{\mathbb Z}
\def\ovc{\beta}
\def\pso{\psi^\circ}
\numberwithin{equation}{section}
\title[Crystalline mean curvature flows]{Existence and uniqueness for
anisotropic
and crystalline mean curvature flows} % with general mobilities}
\author[A. Chambolle \and M. Morini \and M. Novaga\and M. Ponsiglione]{Antonin Chambolle \and Massimiliano Morini \and Matteo Novaga\and Marcello Ponsiglione }
\begin{document}

\maketitle

\begin{abstract} 
\small{An existence  and  uniqueness  result, up  to
fattening, for crystalline mean curvature  flows 
{with  forcing and arbitrary  (convex) mobilities, is proven.}
This is achieved by introducing a new notion of solution to
the corresponding level set formulation. Such a solution satisfies the
comparison  principle and  a stability  property with  respect to  the
approximation by suitably regularized problems.  The results are valid
in any dimension and for arbitrary, possibly unbounded, initial closed
sets.
The approach  accounts   for  the possible  presence of  a
time-dependent   bounded   forcing   term,  with   spatial   Lipschitz
continuity.   As a  byproduct  of  the analysis,  the  problem of  the
convergence of the Almgren-Taylor-Wang  minimizing movements scheme to
a  unique (up  to  fattening) ``flat  flow'' in  the  case of  {general,  possibly crystalline,
anisotropies} is settled.
 
\vskip .3truecm \noindent Keywords: Geometric evolution equations, Minimizing movements, Crystalline mean curvature motion, level set formulation.
\vskip.1truecm \noindent 2000 Mathematics Subject Classification: 
%\vskip.1truecm \noindent 2000 Mathematics Subject Classification:
53C44, 	49M25, 35D40. %49J45, 74N05, 74N15, 74G70, 74G65, 74C15, 74B15, 74B10.
}
\end{abstract}

\tableofcontents

\bibliographystyle{plain}

\section{Introduction}
In this paper we deal with crystalline mean curvature flows; that is,  flows of sets $t\mapsto E(t)$ (formally) governed by the law 
\begin{equation}\label{oee} 
V(x,t) = -\psi(\nu^{E(t)}) (\kappa^{E(t)}_{\p}(x) + g(x,t)),
\end{equation}
where  $V(x,t)$ stands for the (outer) normal velocity of the boundary $\pa E(t)$ at $x$, $\p$ is a given norm on $\R^N$ representing the {\it surface tension}, $\kappa^{E(t)}_{\p}$ is the {\em anisotropic mean curvature} of $\pa E(t)$ associated with the anisotropy $\p$,  $\psi$ is a norm  evaluated at the outer unit normal $\nu^{E(t)}$ to $\pa E(t)$, and $g$ is a  bounded spatially Lipschitz continuous forcing term. The factor $\psi$ plays the role of a {\em mobility}\footnote{Strictly speaking, the
mobility is $\psi(\nu^{E(t)})^{-1}$.}.
We recall that when $\p$ is differentiable in $\R^{N}\setminus\{0\}$, then $\kappa^{E}_{\p}$ is given by
\begin{equation}\label{kappaphi}
\kappa^{E}_{\p}=\Div\left(\nabla \p(\nu^E)\right) ,
\end{equation}
{however in this work we will be interested mostly in the
``crystalline case'', which is whenever the level sets of $\p$ are
polytopes and \eqref{kappaphi} should be replaced with
\begin{equation}\label{kappaphinonsmooth}
\kappa^{E}_{\p}\in \Div\left(\partial \p(\nu^E)\right),
\end{equation}
and which we will describe later on.
}

Equation \eqref{oee} is relevant in  Materials Science and Crystal Growth, see for instance \cite{Taylor78, Gurtin93} and the references therein.
Its  mathematical well-posedness is classical in the smooth setting, that is when  {$\p$, $\psi$, $g$} and the initial set are sufficiently smooth (and $\p$ satisfies suitable ellipticity conditions).  However, it is also well-known that in dimensions $N\geq 3$  singularities may form in finite time even in the smooth case.  When this occurs the strong formulation of  \eqref{oee}  ceases to be applicable and  one needs a weaker notion of solution leading to a (possibly unique) globally defined evolution. 

Among the different approaches that have been proposed in the literature for the classical mean curvature flow (and for several other ``regular'' flows) in order to overcome this difficulty, we start by mentioning the so-called {\em level set approach}~\cite{OS,EvansSpruckI,EvansSpruckII, CGG, GigaBook}, which consists
in  embedding  the initial set in the one-parameter  family of sets given by the  sublevels    of some initial function $u^0$, and then in letting   all  these sets evolve  according to the same geometric law.   The evolving sets are themselves the sublevels of a time-dependent function $u(x,t)$, which turns out to  solve a (degenerate) parabolic equation for $u$ (with the prescribed initial datum $u^0$). The crucial point is that such a parabolic Cauchy problem is  shown to admit a global-in-time  unique viscosity solution for many relevant geometric motions. When this happens, the evolution of the sublevels of $u$ defines  a {\em generalized motion} (with initial set given by the corresponding sublevels of $u^0$), which exists for all times and agrees with the classical one until the appearance of singularities (see \cite{EvansSpruckII}). Moreover, such  a generalized motion satisfies the comparison principle and is unique whenever the level sets of $u$ have an empty interior.  
Let us mention that the appearance of a nontrivial  interior (the so called fattening phenomenon)  may in fact occur  even starting from  a smooth  set  (see for instance \cite{AngIlCh}). On the other hand,  such a phenomenon   is  rather rare:  for instance, one can show that given any uniformly continuous initial function  $u^0$, all its   sublevels, with the  exception of  at most countably many,   will not generate any fattening. 
 
 The second approach which is relevant for the present treatment is represented by  the minimizing movements scheme devised by
Almgren, Taylor and Wang~\cite{ATW} and, independently, by  Luckhaus and Sturzenhecker~\cite{LS}. It  is  variational in nature and hinges on the gradient flow structure of the geometric motion. More precisely, 
it consists in building  a family of discrete-in-time evolutions by an iterative minimization preocedure and in considering 
 any limit  of these  evolutions (as the time step vanishes) as an admissible    global-in-time solution to the  geometric motion, usually referred to as a {\em flat flow} (or ATW flat flow).
As a matter of fact, it is somewhat convenient to combine the variational approach with the level set point of view, by implementing the Almgren-Taylor-Wang scheme (ATW) for all the sublevels of the initial function $u^0$ (level set ATW). It turns out that the two approaches produce in general the same solutions: A simple proof of convergence of the level set ATW to
the viscosity solution of the level set equation  in the case of  anisotropic   mean curvature flows (with smooth anisotropy) is given in \cite{ChambolleNovaga} (see also \cite{ATW,Chambolle}); such a result implies in turn the convergence of the ATW to the aforementioned generalized motion whenever fattening does not occur. { A general consistency result between flat flows and generalized level set motions holding for a rather large class of nonlocal (yet ``regular'') geometric flows has been proved in  \cite{CMP3}.}

We now focus on the main case of interest for this paper, that is,  when the  anisotropy is crystalline. Due to the lack of smoothness, all the results mentioned before for  regular anisotropies become  much more difficult (and, in fact, some of them are still largely    open)  in the crystalline case, starting from the very definition of crystalline curvature which cannot be given  by \eqref{kappaphi} anymore{, but rather by~\eqref{kappaphinonsmooth}: }
one has to consider %look at
 a  suitable selection $z$ of the (multivalued) subdifferential map $x\mapsto \pa\p(\nu^E(x))$ {(of ``Cahn-Hoffmann fields'')},  such that the  tangential divergence $\Div_\tau z$  has minimal $L^2$-norm  among all possible selections. The crystalline curvature is then  given by the tangential divergence $\Div_\tau z_{\rm opt}$ of the optimal Cahn-Hoffman field (see \cite{BeNoPa,GigaGigaPozar}) and thus, in particular,   has a nonlocal character.  

Let us now briefly recall what is known about  the mathematical well-posedness of \eqref{oee} in the crystalline case. 
 In  two dimensions, the problem has been essentially settled in \cite{GigaGiga01} (when $g$ is   constant) by developing a crystalline version of the viscosity approach for the level-set equation, see also ~\cite{Taylor78,AlmTay95, AngGu89, GigaGiga98, GiGu96} for important former work. The viscosity approach adopted in \cite{GigaGiga01} applies in fact to more general equations of the form 
 \beq\label{oeegiga}
 V=f(\nu,-\kappa^E_\p)\,,
 \eeq
 with $f$ continuous and non-decreasing with respect to the second variable,
however without spatial dependence. 
{Former studies were rather treating the problem as a system
of coupled ODEs describing the relative motion of each facet of an
initial crystal~\cite{Taylor78,AlmTay95,AngGu89}.}
We mention also the recent paper
\cite{ChaNov-Crystal15}, where short time existence and uniqueness
of strong solutions for initial ``regular'' sets
(in a suitable sense) is  shown.

In dimension $N\geq 3$ the situation was far less clear until  very recently.
Before commenting on the new developments, let us remark that
before  these,
the only general available notion of global-in-time solution was  that of
{a} flat flow associated with the ATW {scheme, defined as
the limit of a converging subsequence of time discrete approximations.}
However, no general uniqueness and comparison results were available, except for  special classes of initial data \cite{CaCha, BelCaChaNo, GiGuMa98} or for  very specific anisotropies \cite{GigaGigaPozar}.  
As mentioned before, substantial progress   in this direction  has been made only very recently, in   \cite{GigaPozar} and \cite{CMP4}.

In \cite{GigaPozar}, the authors succeed in extending the viscosity approach of \cite{GigaGiga01} to $N=3$. {They  are able to deal with very general equations of the general form \eqref{oeegiga}
% (which requires the forcing term $g$ to be constant),
establishing existence and uniqueness for the corresponding level set formulations. In a forthcoming paper,
they show how to extend their approach to any dimension, which
is a major breakthrough~\cite{GigaPozar17} (moreover the new
proof is considerably simpler than before).
It seems that their method, as far as we know, still requires
a purely crystalline anisotropy $\p$ (so mixed situations are not allowed),
bounded initial sets, and the only possible forcing term is a constant.}
 
 In \cite{CMP4},  the first  global-in-time existence and uniqueness (up to fattening) result for the crystalline mean curvature flow  valid in all dimensions, for arbitrary (possibly unbounded) initial sets, and for general (including crystalline) anisotropies $\p$ {was} established,  but under the particular choice $\psi=\p$ (and $g=0$) in \eqref{oee}. It is based on a  new stronger distributional  formulation of the problem in terms of distance functions, which is reminiscent of, but not quite the same as, the distance formulation
 proposed and studied in \cite{Soner93} (see also \cite{BaSoSou,AmbrosioSoner,CaCha, AmbrosioDancer}). Such a formulation enables the use of parabolic  PDE's arguments to prove comparison results, but of course makes it more difficult to prove existence. The latter is established by implementing the variant of the ATW  scheme devised in ~\cite{Chambolle} (see also \cite{CaCha}). 
 The methods of \cite{CMP4} yield, as a byproduct, the uniqueness, up to fattening, of the ATW flat flow for the equation \eqref{oee}
 with $\psi=\p$ and $g=0$. But it leaves open the uniqueness issue for the general form of  \eqref{oee}  and, in particular, for  the constant mobility case
 \beq\label{oeeATW}
 V=-\kappa^E_{\p}\,,
 \eeq
originally appearing in   \cite{ATW}.  
{The main reason is technical:} the distributional formulation introduced in \cite{CMP4} becomes effective in yielding uniqueness results only if, roughly speaking, the level sets of the $\pso$-distance function from any closed set  ($\pso$ being the norm polar to the  mobility $\psi$) have 
(locally) bounded crystalline curvatures. This is certainly the case when $\p=\psi$ (and explains such a  restriction in \cite{CMP4}).

In this paper   we  remove the  restriction $\p=\psi$ and extend the existence and uniqueness results of \cite{CMP4} to the general equation \eqref{oee}.  In order to  deal with general mobilities, we cannot rely anymore on a  distributional formulation in the spirit of \cite{CMP4}, but instead we   
 extend the  notion of solution via an  approximation procedure by suitable regularized versions of \eqref{oee}. 

We now describe more in details the contributions and the methods of the paper.
Before addressing the general mobilities, we consider the case where $\psi$ may be different from $\p$ but satisfies a suitable regularity assumption, namely   
we assume that the Wulff shape associated with $\psi$ (in short the $\psi$-Wulff shape) admits an  inner tangent $\p$-Wulff shape at all points of its boundary. We call such  mobilities  {\em $\p$-regular} (see Definition~\ref{def:phiregular}). The $\p$-regularity assumption implies in turn that the level sets of the $\pso$-distance function from any closed set have 
locally bounded crystalline curvatures and makes it possible to extend the distributional formulation (and the methods)  of \cite{CMP4}  to \eqref{oee} (Definition~\ref{Defsol}),  to show that such a notion of solution bears a comparison principle (Theorem~\ref{th:compar}) and that 
the ATW scheme converge to it (Theorem~\ref{th:ATW}).  %Moreover,
As is classical, we then
% we use 
these results to build a unique level set flow (and a corresponding generalized motion), which satisfies   comparison and geometricity properties (Theorem~\ref{th:phiregularlevelset}).

Having accomplished this,  we deal with the general case of $\psi$ being any norm. As mentioned before, the idea here is to 
build a level set flow by means of approximation, after the easy observation that for any norm $\psi$ there exists a sequence $\{\psi_n\}$ of $\p$-regular mobilities such that $\psi_n\to \psi$.  More precisely, we say that $u$ is a  solution to the level set flow associated with \eqref{oee} if there exists an approximating sequence $\{\psi_n\}$ of $\p$-regular mobilities such that 
 the corresponding level set flows $u_n$ constructed in Section~\ref{sec:phiregular} locally uniformly converge to  $u$
  (Definition~\ref{deflevelset2}). 
  
  In Theorems~\ref{th:maingenmob} and ~\ref{th:proplevelset} we establish the main results of the paper:  we show that  for any norm $\psi$ a solution-via-approximation $u$  always exists; moreover $u$ satisfies the following properties: 
  \begin{itemize}
  \item[(i)] (Uniqueness and stability): The solution-via-approximation $u$ is unique in that it is   independent of the choice of the approximating sequence of $\p$-regular mobilities $\{\psi_n\}$. In fact, it is stable with respect the convergence of any sequence of mobilities and anisotropies. 
  \item[(ii)] (Comparison): if   $u^0\leq v^0$, then the corresponding level set solutions $u$ and $v$ satisfy $u\leq v$. 
  \item[(iii)] (Convergence of the level set ATW): $u$ is the unique limit of the level set ATW.
  \item[(iv)] (Generic non-fattening): As in the classical case, for any given uniformly continuous initial datum $u^0$ all but countably many   sublevels  do not produce any fattening. 
  \item[(v)] (Comparison with other notions of solutions): Our solution-via-approximation $u$ coincides with the classical viscosity solution in the smooth case and with the Giga-\Pozar\ viscosity solution \cite{GigaPozar,GigaPozar17} whenever such a solution is well-defined, that is, when %$N=3$,
 $g$ is constant, $\p$ is purely crystalline and the initial set is bounded.
  \item[(vi)] (Phase-field approximation): When $g$ is constant, a phase-field Allen-Cahn  type approximation of  $u$ holds. 
 \end{itemize}
We finally mention that   property (iii) implies the convergence of the ATW scheme, whenever no fattening occurs and { thus settles the 
long-standing problem of the uniqueness (up to fattening) of the flat flow corresponding to \eqref{oee}  (and in particular for  \eqref{oeeATW}) when the anisotropy is crystalline.}
In a forthcoming paper~\cite{CMNP-visco},
we will show that it is also possible to build
crystalline flows by approximating of the anisotropies with smooth
ones, however this variant, even if slightly simpler, does not show that
flat flows are unique.

The plan of the paper is the following. In Section~\ref{sec:dist} we extend the distributional formulation of \cite{CMP4} to our setting and we study the main properties of the corresponding notions of sub and supersolutions. The main result of the section is the comparison principle established in {Section}~\ref{sec:comp}. 

In Section~\ref{Minimizing movements} we set up the minimizing movements algorithm and we start paving the way for the main results of the paper by establishing some preliminary results. In particular, the density  estimates and the  barrier argument of 
{Section}~\ref{nuova}, which do not require any regularity assumption on the mobility $\psi$,  will be crucial for the stability analysis of the  ATW scheme needed to deal with the general mobility case and developed in {Section}~\ref{subsec:stability}.

In Section~\ref{sec:phiregular} we develop the existence and uniqueness theory under the assumption of $\p$-regularity for the mobility $\psi$. More precisely, we establish the convergence of the ATW scheme to a distributional solution of the flow, whenever fattening does not occur. Uniqueness then follows from the results of Section~\ref{sec:dist}. 

Finally, in Section~\ref{sec:genmob} we establish the main results of the paper, namely the existence and uniqueness of a solution via approximation by $\p$-regular mobilities. As already mentioned,  the approximation procedure requires  a delicate stability  analysis of the ATW scheme with respect to {varying} mobilities.  Such estimates are  established in {Section}~\ref{subsec:stability} and represent the main technical achievement of
 {Section}~\ref{sec:genmob}.

\section{A distributional formulation of the  curvature flows}\label{sec:dist}

In this section we generalize the approach introduced in \cite{CMP4} by introducing a suitable distributional formulation of   \eqref{oee} and we show that such a formulation yields a comparison principle and is equivalent to the standard viscosity formulation when the anisotropy $\p$ and the mobility $\psi$ are sufficiently regular.

The existence %and uniqueness
 of the distributional solution defined in this section   will be established in Section~\ref{sec:phiregular} under the  additional assumption that the mobility $\psi$ satisfies a suitable regularity assumption (see Definition~\ref{def:phiregular} below).

\subsection{Preliminaries}
{We}  introduce the main objects and notation used throughout the paper.

Given a norm $\eta$ on $\R^N$
(a convex, even, one-homogeneous real-valued function with $\eta(\nu)>0$
if $\nu\neq 0$), we define a polar norm $\eto$ by
$\eto(\xi):=\sup_{\p(\nu)\le 1}\nu\cdot\xi$ 
and an associated anisotropic perimeter $P_\eta$ as
\[
P_\eta(E):=\sup\biggl\{\int_E\Div \zeta\, dx: \zeta\in C^1_c(\R^N; \R^N),\, \eto(\zeta)\leq 1\biggr\}\,.
\]
As is well known, $(\eto)^\circ=\eta$ so that when the set $E$ is smooth enough
one has
\[
P_\eta(E) = \int_{\partial E}\eta(\nu^E)d\H^{N-1}\,,
\]
which is the perimeter of $E$ weighted by the surface tension $\eta(\nu)$.

% We will make repeated use of the following identities
% \beq\label{subp}
% \partial\eta(\nu) =\{\xi\,:\,\eto(\xi)\le1\textup{ and }\xi\cdot \nu \color{blue} \ge \eta(\nu)\} = \{\xi\,:\,\eto(\xi)=1\textup{ and }\xi\cdot \nu=\eta(\nu)\}
% \eeq
% {DOMANDA: E veramente necessario tutto questo??}
% (and the symmetric statement for $\eto$) for $\nu\neq 0$.
% Moreover, 
{We will often use the following characterization:
\beq\label{subp}
 \partial\eta(\nu) =\{\xi\,:\,\eto(\xi)\le1\textup{ and }\xi\cdot \nu 
{ \ge \eta(\nu)\} }
%= \{\xi\,:\,\eto(\xi)=1\textup{ and }\xi\cdot \nu=\eta(\nu)\}
 \eeq
% {DOMANDA: E veramente necessario tutto questo??}
 (and the symmetric statement for $\eto$). In particular,
if $\nu\neq 0$ and $\xi\in\partial\eta(\nu)$, then $\eto(\xi)=1$, and
 $\partial\eta(0)=\{\xi\,:\,\eto(\xi)\le 1\}$.
% while $\partial\eto(0)=\{\xi\,:\,\eta(\xi)\le 1\}$.
}
For $R>0$ we denote
$$
W^{\eta}(x,R):=\{y\,:\,\eto(y-x)\le R\}\,.
$$ 
Such a set is called the {\em Wulff shape} (of radius $R$ and center $x$) associated with the norm $\eta$ and represents
the unique (up to translations) solution of the anisotropic isoperimetric problem 
$$
\min\left\{P_\eta(E)\,:\,  |E|=|W^{\eta}(0,R)|\right\},
$$
see for instance~\cite{FonsecaMuller}.

We denote by $\dist^\eta(\cdot, E)$ the  distance from $E$ induced by the norm $\eta$, that is, for any $x\in \R^N$
\begin{equation}\label{polardist}
\dist^\eta(x, E):=\inf_{y\in E}\eta(x-y)
\end{equation}
if $E\neq\emptyset$, and  $\dist^\eta(x,\emptyset):= + \infty$.  
Moreover,  we denote by $\dd^\eta_E$ the signed distance from $E$ induced by $\eta$, i.e., 
\[
\dd^\eta_E(x):= \dist^\eta(x,E) - \dist^\eta(x,E^c)\,.
\]
so that $\dist^\eta(x,E)=\dd^\eta_E(x)^+$ and $\dist^\eta(x,E^c)=\dd^\eta_E(x)^-$, where we adopted  the  standard notation  $t^+:=t\lor  0$ and $t^-:=(-t)^+$).
Note that by \eqref{subp} we have $\eta(\nabla d^{\eto}_{E})=\eto(\nabla d^{\eta}_{E})=1$ a.e.~in $\R^N\setminus \partial E$.
We will write $\dist(\cdot, E)$ and $\dd_E$ without any superscript to denote the Euclidean distance and signed distance from $E$, respectively. 

Finally we recall that a sequence of closed sets $(E_n)_{n\ge 1}$ in $\R^m$ converges to a closed set $E$ in the {\em Kuratowki sense}:
if the following conditions are satisfied
\begin{itemize}
\item[(i)] if $x_n\in E_n$ for each $n$, any limit point of $(x_n)_{n\ge 1}$ belongs to $E$;
\item[(ii)] any $x\in E$ is the limit of a sequence $(x_n)_{n\ge 1}$, with $x_n\in E_n$ for each $n$.
\end{itemize}
We write in this case:
$$
E_n\stackrel{\mathcal K}{\longrightarrow} E\,.
$$
{It is easily checked} that $E_n\stackrel{\mathcal K}{\longrightarrow} E$ if and only if
(for any norm $\eta$) $\dist^{\eta}(\cdot, E_n)\to \dist^{\eta}(\cdot, E)$ locally uniformly in $\R^m$. In particular, Ascoli-Arzel\`a Theorem shows that
any sequence of closed sets admits a  converging subsequence  in the Kuratowski sense.

\subsection{The distributional  formulation}\label{stass}

In this subsection we give the precise  formulation of the crystalline mean curvature flows we will deal with. 
Throughout the paper the norms $\p$  and  $\psi$   will stand for the  {\em anisotropy} and  the {\em mobility}, respectively, appearing in \eqref{oee}. Note that we do not assume any regularity on $\p$ (nor on $\psi$) and in fact we are mainly interested in the case when $\p$ is crystalline, that is, when the associated unit ball is a polytope.  

Moreover,  we will assume throughout the paper that the  forcing term $g:\R^N\times [0, +\infty)\to \R$ satisfies the following two hypotheses:
\begin{itemize}
\item[H1)] $g\in L^{\infty}(\R^N\times (0, \infty))$;
\item[H2)] there exists $L>0$ such that $g(\cdot, t)$ is L-Lipschitz continuous (with respect to the metric $\pso$) for a.e. $t>0$.
\end{itemize}
\begin{remark}
Assumption H1) can be in fact weakened and replaced by 
\begin{itemize}
\item[H1)'] for every $T>0$, $g\in L^{\infty}(\R^N\times (0, T))$.
\end{itemize}
 Indeed under the weaker assumption H1)', all the arguments and the estimates presented throughout the paper continue to work in any  time interval $(0,T)$, with some of the constants involved possibly depending on $T$. {In the 
same way, if one restricts our study to the evolution of sets with
compact boundary, then one could assume that $g$ is only locally
bounded in space.}
We assume H1) instead of H1)' only to simplify the presentation. 
\end{remark}
We are now ready to provide  a suitable  distributional  formulation of the curvature flow \eqref{oee}.
\begin{definition}\label{Defsol}
Let $E^0\subset\R^N$ be 
a closed set. 
Let $E$ be a closed set in $\R^N\times [0,+\infty)$ and
for each $t\geq 0$ denote $E(t):=\{x\in \R^N\,:\, (x,t)\in E\}$. We
say that $E$ is a {\em superflow} of \eqref{oee} with
initial datum $E^0$ if
\begin{itemize}
\item[(a)] {\sc Initial Condition:} $E(0)\subseteq {E}^0$;
\item[(b)] {\sc Left Continuity:}
$E(s)\stackrel{\mathcal K}{\longrightarrow} E(t)$ as $s\nearrow t$ for all $t>0$;
\item[(c)]
If  ${E}(t)=\emptyset$ for some  $t\ge 0$, then $E(s)=\emptyset$ for all $s > t$.
\item[(d)] {\sc Differential Inequality:} 
{Set $T^*:=\inf\{t>0\,:\, E(s)=\emptyset \text{ for $s\geq t$}\}$, 
and 
$$
d(x,t):=\dist^{\pso}(x, E(t)) \qquad \text{ for all } (x,t)\in \R^N\times (0,T^*)\setminus E.
$$ }
 Then there exists $M>0$ such that the inequality
\begin{equation}\label{eq:supersol}
 \partial_t d \ge \Div z+g- Md
\end{equation}
holds in the distributional sense in $\R^N\times (0,T^*)\setminus E$
for a suitable $z\in L^\infty(\R^N\times (0,T^*))$ such that
$z\in \partial\p(\nabla d)$~a.e., $\Div z$ is a Radon measure in $\R^N\times (0,T^*)\setminus E$, and 
{$(\Div z)^+\in L^\infty(\{(x,t)\in\R^N\times (0,T^*):\, d(x,t)\geq\delta\})$} for every $\delta\in (0,1)$.
\end{itemize}

We say that $A$, open set in $\R^N\times [0,+\infty)$, is
a subflow of \eqref{oee} with initial datum $E^0$ if $A^c$ is a superflow of \eqref{oee} with $g$ replaced by $-g$ and with initial datum $(\mathring{E}^0)^c$.

Finally, we say that $E$, closed set in $\R^N\times [0,+\infty)$,  is a solution of \eqref{oee} with initial datum $E^0$ if it is a superflow and if $\mathring{E}$ is a subflow, both with initial datum $E^0$.
\end{definition}
In Subsection~\ref{subsec:convergence} we will prove the existence of  solutions satisfying \eqref{eq:supersol} with $M=L$.
\begin{remark} \label{rem:sci}
{Notice} that the closedness of $E$ yields that if $(x_k,t_k)\to (x,t)$, with $t<T^*$,
since there exist $y_k\in E(t_k)$ with $\pso(x_k-y_k)=d(x_k,t_k)$ and
since any limit point of $(y_k,t_k)$ is in $E$,
one has $d(x,t)\le\liminf_k d(x_k,t_k)$, that is, $d$ is lower semicontinuous. 
On the other hand, condition (b) implies that
$d(\cdot,t)$ is left-continuous.
{
Moreover, by condition  (d) of Definition~\ref{Defsol},  the distributional derivative 
$ \partial_t d$ is a Radon
measure in $\R^N\times (0,T^*)\setminus E$,
so that $d$ is locally a function with bounded
variation; using the fact that the distance functions are uniformly Lipschitz, we
can deduce 
that for any $t\in  [0,T^*)$, 
$d(\cdot,s)$ converges locally uniformly in $\{x\,:\, d(x, t)>0\}$ as $s\searrow t$
to  some function} $d^r$ with $d^r \geq d(\cdot, t)$ in $\{x\,:\, d(x, t)>0\}$, while $d(\cdot,s)$ converges locally uniformly to $d(\cdot, t)$
as $s\nearrow t$ (\textit{cf}~\cite[Lemma~2.4]{CMP4}).
\end{remark}

\begin{remark}\label{rm:measure}
Notice that the initial condition for subflows may be rewritten as $ \mathring E^0 \subseteq A(0)$.
In particular,  if $\partial{E}^0=\partial\mathring{E}^0$ and $E$ is a solution according to the previous definition, then $E(0)=E^0$.

\end{remark}

We now introduce the corresponding notion of sub- and supersolution to the level set flow associated with \eqref{oee}.

 \begin{definition}[Level set subsolutions and supersolutions]\label{deflevelset1}
 Let  $u^0$ be a  uniformly continuous function on $\R^N$. We will say that a  lower semicontinuous   function $u:\R^N\times [0, +\infty)\to \R$ is a {\em  supersolution  to the level set flow} corresponding to \eqref{oee} ({\em level set supersolution} for short), with initial datum $u^0$,  if $u(\cdot, 0)\geq u^0$ and if for a.e. $\lambda \in \R$ the closed sublevel set $\{(x,t)\,:\, u(x,t)\leq \lambda\}$  is a superflow {of}  \eqref{oee} in the sense of Definition~\ref{Defsol}, with initial datum $\{u_0\leq \lambda\}$.
 
 We will say that an  upper-semicontinuous   function $u:\R^N\times [0, +\infty)\to \R$ is a {\em  subsolution to the level set flow} corresponding to \eqref{oee}  ({\em level set subsolution}  for short), with initial datum $u^0$, if~$-u$ is a superlevel set flow  in the previous sense, with initial datum $-u_0$ {and with $g$ replaced by~$-g$.} 
 
 Finally, we will  say that a continuous   function $u:\R^N\times [0, +\infty)\to \R$ is a {\em  solution} to the level set flow corresponding to \eqref{oee} if it is both a level set subsolution and  supersolution. 
 \end{definition}

\subsection{The comparison principle}\label{sec:comp}
 
In this subsection we establish a comparison principle between sub- and superflows as defined in the previous subsection. 
A first technical result is a (uniform) left-continuity
estimate for the distance function to a superflow.
\begin{lemma}\label{lem:uniformcontrol}
Let $E$ be a superflow in the sense of Definition~\ref{Defsol}, and
$d(x,t)=\dist^{\pso}(x,E(t))$ the associated distance function.
Then, there exists $\tau_0,\chi$ depending on $N,\|g\|_\infty,M$
such that for any $x,t\ge 0$ and any $s\in [0,\tau_0]$,
\begin{equation}\label{eq:controlrightdist}
d(x,t+s) \ge d(x,t)e^{-5Ms} - \chi\sqrt{s},
\end{equation}
and { (for any $s\in [0,\tau_0]$ with $ s \le t$)}
\begin{equation}\label{eq:controlleftdist}
d(x,t-s) \le d(x,t)e^{5Ms} + \chi\sqrt{s}.
\end{equation}
\end{lemma}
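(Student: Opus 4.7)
The plan is to test the distributional inequality \eqref{eq:supersol} against a product $\omega_\e(y-x_0)\alpha(t)$, where $\omega_\e$ is a smooth spatial bump of $\pso$-radius $\e$ and $\alpha$ a time cutoff, to derive a differential inequality for
$$
F(t):=\int d(y,t)\,\omega_\e(y-x_0)\,dy,
$$
apply Gronwall's inequality in both time directions, and recover pointwise bounds on $d(x_0,\cdot)$ from the $\pso$-Lipschitz regularity of $d(\cdot,t)$. The parameter $\e$ is optimized against $s$, producing the $\sqrt{s}$ term, while the exponential factor originates from the zeroth-order $-Md$ term.

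Fix $(x_0,t_0)$ with $d(x_0,t_0)=r>0$, and pick $\omega\in C_c^\infty(W^{\pso}(0,1))$, nonnegative, with $\int\omega=1$, setting $\omega_\e(y):=\e^{-N}\omega(y/\e)$, so $\int|\nabla\omega_\e|\,dy\le C_0/\e$. To ensure the support of the test function remains in $\R^N\times(0,T^*)\setminus E$ where \eqref{eq:supersol} is valid, introduce the stopping time
$$
\tau:=\sup\{\sigma\in[0,\tau_0]\,:\,\overline{W^{\pso}(x_0,\e)}\cap E(t_0+\sigma')=\emptyset\text{ for all }\sigma'\in[0,\sigma]\},
$$
which is strictly positive once $\e<r$ by lower semicontinuity of $d$ (Remark~\ref{rem:sci}). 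For $t\in(t_0,t_0+\tau)$, integration by parts in \eqref{eq:supersol} combined with the bound $|z|\le C_1$ (a consequence of $\pso(z)\le 1$) gives, distributionally,
$$
F'(t)\ge-\int z\cdot\nabla\omega_\e\,dy+\int g\,\omega_\e\,dy-MF(t)\ge-\frac{C_0C_1}{\e}-\|g\|_\infty-MF(t),
$$
and Gronwall's inequality yields
$$
F(t_0+\sigma)\ge F(t_0)e^{-M\sigma}-(C_0C_1/\e+\|g\|_\infty)\sigma\,e^{M\sigma}\qquad\forall\,\sigma\in[0,\tau).
$$

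The $1$-Lipschitz property of $d(\cdot,t)$ in the $\pso$-norm gives $F(t_0)\ge r-\e$ and $F(t_0+\sigma)\le d(x_0,t_0+\sigma)+\e$. Substituting and choosing $\e:=\sqrt\sigma$, all additive errors collapse to $O(\sqrt\sigma)$, so
$$
d(x_0,t_0+\sigma)\ge re^{-5M\sigma}-\chi\sqrt\sigma
$$
for a constant $\chi=\chi(N,\|g\|_\infty,M)$ and all $\sigma\in[0,\tau)$; the factor $5M$ in the exponential is convenient slack absorbing the $e^{M\sigma}$ factor in the error. To close the bootstrap, if $\tau<\tau_0$ then at $t_0+\tau$ the ball $\overline{W^{\pso}(x_0,\e)}$ meets $E(t_0+\tau)$, so $d(x_0,t_0+\tau)\le\e=\sqrt\tau$, contradicting the just-proved lower bound unless $r\le(\chi+1)e^{5M\tau_0}\sqrt{\tau_0}$, in which case \eqref{eq:controlrightdist} holds trivially. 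For \eqref{eq:controlleftdist}, the same scheme runs backward in time: property~(b) of Definition~\ref{Defsol} together with the lower semicontinuity of $d$ ensures the bump is disjoint from $E(t)$ for $t$ slightly below $t_0$, and integrating the same differential inequality from $t_0-\sigma$ to $t_0$ (backward Gronwall) gives
$$
F(t_0-\sigma)\le F(t_0)e^{M\sigma}+(C_0C_1/\e+\|g\|_\infty)\sigma\,e^{M\sigma},
$$
from which the Lipschitz bound $F(t_0-\sigma)\ge d(x_0,t_0-\sigma)-\e$ and the same optimization $\e=\sqrt\sigma$ yield \eqref{eq:controlleftdist}.

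The main obstacle is the circularity of the bootstrap: \eqref{eq:supersol} is only valid where the spatial bump avoids $E$, yet we use it to prove a quantitative bound preventing $E$ from absorbing the bump too quickly. The stopping-time argument, combined with the $O(\sqrt\sigma)$ loss from the $1/\e$ factor, resolves this provided $\tau_0$ is chosen small depending on $N$, $\|g\|_\infty$, and $M$. A minor technical point is that $\Div z$ is only a Radon measure, so $F'$ is only a distribution, but this is handled by testing against smooth cutoffs and passing to the integral form of Gronwall's inequality.
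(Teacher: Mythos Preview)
Your approach---testing \eqref{eq:supersol} against a spatial mollifier $\omega_\e$ and applying Gronwall to the averaged distance $F(t)$---is genuinely different from the paper's, which instead performs a time reparametrization $\tau(s)=M^{-1}\log(1+Ms)$ transforming $d$ into a supersolution of the $\p$-total variation flow and then invokes the barrier estimate of \cite[Lemma~3.2]{CMP4}. Your route is more self-contained (no external lemma), and the mollifier-plus-Gronwall scheme is sound in principle.

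That said, the bootstrap closing the circularity is not correct as written. You optimize $\e=\sqrt\sigma$ for the target time $\sigma$, but in the bootstrap you write ``$\e=\sqrt\tau$'', which is inconsistent: $\tau$ was defined as the stopping time for a \emph{fixed} $\e$, so it cannot then determine $\e$. Moreover, the claim that \eqref{eq:controlrightdist} ``holds trivially'' when $r\le(\chi+1)e^{5M\tau_0}\sqrt{\tau_0}$ is false, since at $s=0$ the right-hand side equals $r>0$. The correct argument runs as follows: for a given $\sigma\in(0,\tau_0]$, set $\e=\sqrt\sigma$; if $r\le\chi\sqrt\sigma$ the estimate at $\sigma$ is trivial, so assume $r>\chi\sqrt\sigma$ (with $\chi$ to be chosen). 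Let $\tau^*\le\sigma$ be the first time the $\e$-bump meets $E$. Your Gronwall bound is valid on $[0,\tau^*)$ and by left-continuity at $\tau^*$, giving $d(x_0,t_0+\tau^*)\ge re^{-M\tau^*}-\chi'\sqrt\sigma$ for some $\chi'=\chi'(N,\|g\|_\infty,\tau_0)$. If $\tau^*<\sigma$, then also $d(x_0,t_0+\tau^*)\le\e=\sqrt\sigma$, forcing $r\le(1+\chi')e^{M\tau_0}\sqrt\sigma$. Choosing $\chi:=(1+\chi')e^{M\tau_0}$, the assumption $r>\chi\sqrt\sigma$ excludes this, hence $\tau^*=\sigma$ and the estimate at $\sigma$ follows.

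Two further points. First, in the paper's notation $W^{\pso}(0,1)=\{\psi\le 1\}$; to exploit the $1$-Lipschitz property of $d(\cdot,t)$ in the $\pso$-metric you want the bump supported in $\{\pso\le 1\}=W^\psi(0,1)$. This is cosmetic (the norms are equivalent). Second, \eqref{eq:controlleftdist} is an immediate consequence of \eqref{eq:controlrightdist}: applying the latter at time $t-s$ gives $d(x,t)\ge d(x,t-s)e^{-5Ms}-\chi\sqrt s$, hence $d(x,t-s)\le d(x,t)e^{5Ms}+\chi e^{5M\tau_0}\sqrt s$. Your separate backward argument is unnecessary and has its own unaddressed bootstrap issue (nothing in properties (b) or lower semicontinuity guarantees the bump stays in $\{d>0\}$ on the whole interval $[t_0-\sigma,t_0]$).
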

\begin{proof}
The proof follows {the lines of the proof of~\cite[Lemma~3.2]{CMP4} up to minor changes that we will briefly describe in the following.  
By definition of a superflow we have 
\[
\partial_t d \ge \Div z - Md - \|g\|_\infty,
\]
wherever $d>0$.}
Consider $(\bar x,\bar t)$ with $d(\bar x,\bar t)=R>0$.
For $s>0$, let $\tau(s):=\log(1+Ms)/M$ and define
\[
\delta(x,s) = d(x,\bar t+\tau(s))(1+Ms) + \|g\|_\infty s \ge 0.
\]
We have that $\delta(x,0)=d(x,\bar t)$, while{,
in $\{d>0\}$, $\delta(x,\cdot)$ is $BV$ in time, the
singular part $\partial_s^s\delta$ is nonnegative (as the singular
part $\partial_t^s d$ is nonnegative thanks to~\eqref{eq:supersol} and the
assumption on $\Div z$) and the absolutely continuous part
satisfies}
\begin{equation*}
{\partial_s^a} \delta (x,s)= ({\partial_t^a} d(x,\tau(s))\tau'(s)(1+Ms)
+ Md(x,\tau(s))+\|g\|_\infty
\ge \Div z (x,\tau(s)).
\end{equation*}
As $z(x,\tau(s))\in\partial \p(\nabla \delta(x,s))$, we obtain that
$\delta$ is a supersolution of the $\p$-total variation
flow starting from $d(\cdot,\bar t)$,
and we can reproduce the proof of~\cite[Lemma~3.2]{CMP4}:
we find that there exists a constant $\chi_N$ such that
$\delta(\bar x,s)\ge R-\chi_N\sqrt{s}$ for $s\ge 0$
as long as this bound ensures that $d(\bar x,t+\tau(s))>3R/4$,
which is as long as
\begin{equation}\label{eq:limit_s}
\frac{R}{4} -\chi_N\sqrt{s} - \left(\|g\|_\infty + \frac{3MR}{4}\right)s>0.
\end{equation}
{Now we prove that  for any $s\ge 0$
\begin{equation}\label{nwp}
d(\bar x,\bar t+\tau(s))(1+Ms)\ge R-4\chi_N\sqrt{s}-(4\|g\|_\infty+3MR)s.
\end{equation}
Indeed, as long as \eqref{eq:limit_s} holds true we have 
\begin{multline*}
R-4\chi_N\sqrt{s}-(4\|g\|_\infty+3MR)s \le R-\chi_N\sqrt{s}-\|g\|_\infty s  \\
\le \delta(\bar x,s) -\|g\|_\infty s = d(\bar x,\bar t+\tau(s))(1+Ms).
\end{multline*}
%the right-hand side of
%this expression is less than $R-\chi_N\sqrt{s}-\|g\|_\infty s$ and
%this follows from the estimate $\delta(\bar x,s)\ge R-\chi_N\sqrt{s}$,
On the other hand, for later times  the left-hand side of \eqref{nwp}  is
(always) nonnegative and the right-hand side becomes nonpositive.
Notice that \eqref{nwp} can be rewritten as}
\[
d(\bar x,\bar t+\tau(s))(1+Ms) \ge d(\bar x,\bar t)(1-3Ms) - 4\chi_N\sqrt{s}
-4\|g\|_\infty s,
\]
and since this holds for any $s\ge 0$ and does not depend on the
particular value of $R$, it holds in fact for any $\bar x,\bar t$ and
we denote this point simply by $x,t$ in the sequel.

Since $s=(e^{M\tau(s)}-1)/M$, we deduce that for any 
$x,t\ge 0,\tau\ge 0$,
\noop{ %\color{blue}
\begin{multline*}%\label{eq:controlalltime}
d(x, t+\tau)\ge d( x, t)(4e^{-M\tau}-3)
- 4\|g\|_\infty \frac{1-e^{-M\tau}}{M} - 4\chi_N \sqrt{\frac{e^{M\tau}-1}{M}}e^{-M\tau}
\\
\ge 
d( x, t)(4e^{-M\tau}-3) - 4\chi_N \sqrt{\frac{e^{M\tau}-1}{M}}e^{-M\tau}.
\end{multline*}
Lemma~\ref{lem:uniformcontrol} easily follows  by Taylor expansion}.
{
\begin{equation*}%\label{eq:controlalltime}
d(x, t+\tau)\ge d( x, t)(4e^{-M\tau}-3)
- 4\|g\|_\infty \frac{1-e^{-M\tau}}{M} - 4\chi_N \sqrt{\frac{e^{M\tau}-1}{M}}e^{-M\tau}
%\\
%\ge 
%d( x, t)(4e^{-M\tau}-3) - 4\chi_N \sqrt{\frac{e^{M\tau}-1}{M}}e^{-M\tau}.
\end{equation*}
Lemma~\ref{lem:uniformcontrol} follows  by Taylor expansion}.
%{DOMANDA: non capivo la formula precedente, che mi sembrava falsa?}
\end{proof}

We can now show the following important comparison result.
\begin{theorem}\label{th:compar}
Let $E$ be a superflow  with initial datum $E^0$ and
$F$ be a subflow with initial datum $F^0$ in the sense of Definition~\ref{Defsol}. Assume that
$\dist^{\pso}(E^0,{F^0}^c)=:\Delta>0$. 
{Then, 
$$\dist^{\pso}(E(t),F^c(t))\ge \Delta \mathrm{e}^{-Mt} \qquad \text{ for all } t\ge 0,$$
where $M>0$ is as in 
\eqref{eq:supersol} { for both $E$ and $F$}.}
\end{theorem}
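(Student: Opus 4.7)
The plan is to track $E$ and $F^c$ simultaneously through their $\pso$-distance functions and show that the sum of these distances cannot decay faster than the prescribed exponential rate. Set
$$d_E(x,t):=\dist^{\pso}(x,E(t)),\qquad \tilde d(x,t):=\dist^{\pso}(x,F^c(t)).$$
Since for every $x\in\R^N$ one has $d_E(x,t)+\tilde d(x,t)\ge \dist^{\pso}(E(t),F^c(t))$ and conversely the infimum of the left-hand side is attained (or approximated) on the ``segment'' between $E(t)$ and $F^c(t)$, the conclusion is equivalent to
\begin{equation}\label{eq:goalcomp}
d_E(x,t)+\tilde d(x,t)\,\ge\,\Delta e^{-Mt}\qquad\text{for every }x,\,t\ge 0.
\end{equation}

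The first step is to extract the two differential inequalities. By Definition~\ref{Defsol} applied to the superflow $E$ and to the superflow $F^c$ (which has forcing $-g$ and the same constant $M$), there exist Cahn--Hoffman fields $z_E\in\partial\p(\nabla d_E)$, $\tilde z\in\partial\p(\nabla\tilde d)$ with $(\Div z_E)^+,(\Div\tilde z)^+\in L^\infty$ at positive distance from the respective zero sets, such that
$$\partial_t d_E\ge \Div z_E + g - Md_E,\qquad \partial_t\tilde d\ge \Div\tilde z - g - M\tilde d$$
distributionally. Adding them yields, in $\{d_E>0\}\cap\{\tilde d>0\}$,
$$\partial_t w\ge \Div(z_E+\tilde z)-Mw,\qquad w:=d_E+\tilde d,$$
with the forcing terms cancelling exactly.

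The heart of the argument is the following heuristic, to be made rigorous by doubling of variables: at a point where $w$ attains its spatial minimum, $\nabla d_E=-\nabla\tilde d$ a.e., and since $\p$ is even one has $\partial\p(-\nabla d_E)=-\partial\p(\nabla d_E)$, so one may select $\tilde z=-z_E$ locally, making $\Div(z_E+\tilde z)$ vanish (in the distributional sense, up to measure-theoretic care). The resulting differential inequality $\partial_t w\ge -Mw$ then produces \eqref{eq:goalcomp} by Gronwall. To implement this rigorously I would, for small $\varepsilon>0$, consider the penalized functional
$$W_\varepsilon(x,y,t):=d_E(x,t)+\tilde d(y,t)+\tfrac{1}{\varepsilon}\pso(x-y)^2+\varepsilon|x|^2,$$
localize in a compact space-time slab, and analyze $W_\varepsilon$ at an approximate minimizer $(x_\varepsilon,y_\varepsilon,t_\varepsilon)$, using the left-continuity estimate of Lemma~\ref{lem:uniformcontrol} and the $BV$-in-time character of $d_E,\tilde d$ (Remark~\ref{rem:sci}) to control time slices, and using the cancellation of the Cahn--Hoffman fields together with the bound on $(\Div z)^+$ away from zero to control space. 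Letting $\varepsilon\to 0$ one obtains, for $\Delta(t):=\inf_x w(x,t)$, the Gronwall-type inequality $\Delta(t)\ge \Delta(0)e^{-Mt}\ge\Delta e^{-Mt}$.

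The main obstacle is exactly this rigorous matching of the two distributional inequalities at a non-classical minimum: the distances are only $BV$ in time with a signed singular part, the infimum of $w$ need not be attained (the sets may be unbounded and one needs the $\varepsilon|x|^2$ perturbation and a barrier-type control at infinity coming from Lemma~\ref{lem:uniformcontrol}), and the selection $\tilde z=-z_E$ has to be justified through the evenness of $\partial\p$ rather than pointwise at a classical critical point. The trivial complement---handling times after which $E(t)$ or $F^c(t)$ becomes empty (in which case $d_E\equiv+\infty$ or $\tilde d\equiv+\infty$ by condition (c) of Definition~\ref{Defsol} and \eqref{eq:goalcomp} holds vacuously) and the final passage to $\dist^{\pso}(E(t),F^c(t))$---is immediate once \eqref{eq:goalcomp} is proved.
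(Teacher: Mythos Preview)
Your heuristic is sound---the cancellation of the forcing terms upon adding the two inequalities, and the role of the evenness of $\p$---but the proposal is really only a plan, and the hard step you flag as ``the main obstacle'' is not resolved. Two concrete points:

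\textbf{The fields are data, not choices.} The Cahn--Hoffman fields $z_E$ and $\tilde z$ are part of the definition of the super/subflow; you are not free to ``select $\tilde z=-z_E$ locally''. What actually saves the argument is the monotonicity identity: since $z_E\in\partial\p(\nabla d_E)$ and, by evenness, $-\tilde z\in\partial\p(-\nabla\tilde d)$, the convexity of $\p$ gives
\[
(z_E+\tilde z)\cdot(\nabla d_E+\nabla\tilde d)\ \ge\ 0\qquad\text{a.e.}
\]
This inequality---not a pointwise cancellation---is the mechanism by which the divergence term is controlled.

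\textbf{Doubling of variables is ill-suited here.} The inequalities are distributional, with $\Div z$ a Radon measure whose negative part is not $L^\infty$. There is no smooth test-function/jet structure to localize at an approximate minimizer $(x_\varepsilon,y_\varepsilon,t_\varepsilon)$, and extracting pointwise information about $\Div z_E$, $\Div\tilde z$ there is not possible in general.

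The paper proceeds by a direct energy method instead. One works on a short interval $(0,t^*)$ (iterating via left-continuity), restricts to a strip $S$ between $E$ and $F^c$, and truncates $d_E$, $d_{F^c}$ from below (by $\eta_2+Ct$) so that $(\Div z)^+$ is bounded and both differential inequalities hold throughout $S$. One then sets, for $p>N$,
\[
w:=\bigl(\Delta-e^{Mt}(d_E+\tilde d)\bigr)_+^{p},
\]
multiplies by a spatial cutoff $\eta_\varepsilon^p$, and differentiates $\int_S w\,\eta_\varepsilon^p\,dx$ in time. After integration by parts the divergence term becomes $+e^{Mt}\Psi''\,(z_E+\tilde z)\cdot(\nabla d_E+\nabla\tilde d)\ge 0$ by the inequality above, leaving only a boundary term of size $\|\nabla\eta_\varepsilon\|_{L^p}$. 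An ODE comparison yields $\int_S w\,\eta_\varepsilon^p\le (Cp\|\nabla\eta_\varepsilon\|_{L^p}t)^p\to 0$ as $\varepsilon\to 0$ (since $p>N$), so $w\equiv 0$, which is exactly \eqref{eq:goalcomp} on the strip and hence everywhere. This bypasses any pointwise analysis at minimizers and handles the unbounded setting through the cutoff rather than an additive $\varepsilon|x|^2$ penalty.
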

\begin{proof}
Let $T^*_E$ and $T^*_F$ be the maximal existence time for $E$ and $F$. For all $t> \min\{T^*_E, T^*_F\}=:T^*$ we have that either $E$ or $F^c$ is empty. For all such $t$'s   the conclusion clearly holds true. 

Thus, we may assume without loss of generality that $T^*_E, T^*_F >0$ and we consider the case $t\leq T^*$. 
By iteration %(thanks to property (b2) )
(thanks to the left-continuity of $d$)
it is clearly enough to show the conclusion of the theorem for a time interval $(0, t^*)$ for some $0<t^*\leq T^*$.

 Let us fix $0<\eta_1<\eta_2<\eta_3<{\Delta}/2$. We denote by $z_E$ and $z_{F^c}$ the  fields appearing in the definition of superflow (see Definition \ref{Defsol}), corresponding to $E$ and $F^c$, respectively.  Consider the set 
$$
S:=\{x\in \R^N\,:\, d^{\pso}_E(x,0)>\eta_1\}\cap \{x\in \R^N\,:\, d^{\pso}_{F^c}(x,0)>\eta_1\} \,.
$$
We now set 
\begin{align*}
& \widetilde d_E:=d^{\pso}_E\lor (\eta_2+Ct)\,,\\
&  \widetilde d_{F^c}:=d^{\pso}_{F^c}\lor (\eta_2+Ct)\,,
\end{align*}
with $C>0$ to be chosen later. 
By our assumptions $(\widetilde d_E+\widetilde d_{F^c})(\cdot, 0)\geq \Delta$. Moreover, since by construction 
$$
\widetilde d_E+\widetilde d_{F^c}\ge \Delta + (\eta_2-\eta_1)\text{ on }\pa S\times  \{0\}\,,
$$
it follows from Lemma~\ref{lem:uniformcontrol}
that  there exists $t^*\in (0,1\land T^*)$ such that
\beq\label{parabolicbd}
\widetilde d_E+\widetilde d_{F^c}\geq \Delta\text{ on }\pa S\times  (0, t^*)\,.
\eeq
Relying again on  Lemma~\ref{lem:uniformcontrol} and arguing similarly we also have (for a possibly smaller $t^*$)

$$
E(t)\subset\subset F(t) \qquad\text{for }t\in (0, t^*)\,.
$$
\beq\label{claim2}
\widetilde d_E=d^{\pso}_E \quad\text{and}\quad \widetilde d_{F^c}=d^{\pso}_{F^c}\qquad\text{in }S''\times (0, t^*)\,,
\eeq
where
$$
S'':=\{x\in \R^N\,:\, d^{\pso}_E(x,0)>\eta_3\}\cap \{x\in \R^N\,:\, d^{\pso}_{F^c}(x,0)>\eta_3\}\,,
$$ 
and 
$$
S\subset\biggl\{x\in \R^N\,:\, d^{\po}_E(x,t)>\frac{\eta_1}{2}\biggr\}\cap 
\biggl\{x\in \R^N\,:\, d^{\po}_{F^c}(x,t)>\frac{\eta_1}{2}\biggr\}\quad\text{for all }t\in (0, t^*)\,.
$$
Since $d^{\pso}_E$ is Lipschitz continuous in space and  $ \partial_t d^{\pso}_E $ is a measure wherever $d^{\pso}_E$ is positive, it follows that $d^{\pso}_E$ (and in turn $\widetilde d_E$)
is a function in $BV_{loc}(S\times (0,t^*))$ and its
distributional time derivative has the form
\[
 \partial_t d^{\pso}_E  = \sum_{t\in J}[ d^{\pso}_E(\cdot,t+0)- d^{\pso}_E(\cdot,t-0)]dx + \partial_t^d  d^{\pso}_E 
\]
where $J$ is the (countable) set of times where $d^{\pso}_E$ 
jumps and $\partial^d_t  d^{\pso}_E$ is the diffuse 
part of the derivative. It turns out that (see Remark~\ref{rem:sci})
$d^{\pso}_E(\cdot,t+0)-d^{\pso}_E(\cdot,t-0)\ge 0$ for each $t\in J$. Moreover, since the positive part
of  $\Div z_E$ is absolutely continuous with respect to the Lebesque measure
{(\textit{cf} Def.~\ref{Defsol}, (d))}, 
\eqref{eq:supersol} entails
\[
\partial^d_t d^{\pso}_E\ge \Div z_E+g- M d^{\pso}_E 
\]
  in $S\times (0,t^*)$. 
 Using  the chain rule (see for instance \cite{AmbDM}), in $S\times (0, t^*)$ we have 
$$
\partial^d_t \widetilde d_E=
\begin{cases}
C & \text{a.e. in }\{(x,t)\,:\,\eta_2+Ct>d^{\pso}_E(x,t)\}\,,\\
\partial^d_t  d^{\pso}_E & \text{$|\partial^d_t  d^{\pso}_E|$-a.e. in }\{(x,t)\,:\,\eta_2+Ct\leq d^{\pso}_E(x,t)\}\,.
\end{cases}
$$
An analogous formula holds for $\partial^d_t \widetilde d_{F^c}$. Recalling that 
$(\Div z_E)^+$ and  {$(\Div z_{F^c})^+ $} belong to  $L^{\infty}(S\times(0,t^*))$  
it follows  that 
\beq\label{zetae0}
\partial^d_t \widetilde d_E \geq \Div z_E+g-M\widetilde d_E \quad\text{and}\quad \partial^d_t \widetilde d_{F^c} \geq \Div z_{F^c}-g-M\widetilde d_{F^c}
\eeq
in the sense of measures in $S\times (0, t^*)$ provided that we choose 
$$
C\geq \|(\Div z_E)^+\|_{L^{\infty}(S\times(0,t^*))}+\|(\Div z_{F^c})^+\|_{L^{\infty}(S\times(0,t^*))}+\|g\|_{L^{\infty}(S\times(0,t^*))}\,.
$$
 Note also that a.e. in $S\times (0, t^*)$
\beq\label{zetae1}
z_E\in \pa \p(\nabla \widetilde d_E) \quad\text{and}\quad z_{F^c}\in \pa \p(\nabla \widetilde d_{F^c})\,.
\eeq

Fix $p>N$ and set $\Psi(s):=(s^+)^p$  and 
$w:=\Psi(\Delta - e^{Mt} (\widetilde d_E + \widetilde d_{F^c}))$.  By \eqref{parabolicbd} we have 
\beq\label{asbefore0}
w=0 \qquad\text{on }\pa S\times (0, t^*)\,.
\eeq
Using as before the chain rule for $BV$ functions, recalling \eqref{zetae0} and the fact that the jump parts of $\pa_t\widetilde d_E$ and $\pa_t\widetilde d_{F^c}$ are nonnegative, in $S\times (0, t^*)$ we have
\begin{multline}\label{asbefore}
\partial_t w\leq 
-\Psi'\big(\Delta - \mathrm{e}^{Mt}(\widetilde d_E + \widetilde d_{F^c})\big)\mathrm{e}^{Mt} [M(\widetilde d_E + \widetilde d_{F^c})+\partial^d_t (\widetilde d_E +  \widetilde d_{F^c})]
\\
\le -\Psi'\big(\Delta - \mathrm{e}^{Mt}(\widetilde d_E + \widetilde d_{F^c})\big)\mathrm{e}^{Mt}\Div(z_E+z_{F^c})\,,
\end{multline}
where in the last inequality we have used \eqref{zetae0}.
 Choose a cut-off function $\eta\in C^{\infty}_c(\R^N)$ such that $0\leq \eta\leq 1$ and $\eta\equiv 1$ on $B_1$. For every $\e>0$ we set $\eta_\e(x):=\eta(\e x)$.
Using \eqref{asbefore0} and \eqref{asbefore}, we have
\begin{align*}
\partial_t \int_{S} w \eta_\e^p dx
&\le -\mathrm{e}^{Mt} \int_{S}\eta_\e^p\Psi'\big(\Delta - \mathrm{e}^{Mt}(\widetilde d_E + \widetilde d_{F^c})\big)\Div(z_E+z_{F^c})\\
&= -\mathrm{e}^{Mt}\int_{S} \eta_\e^p\Psi''\big(\Delta - \mathrm{e}^{Mt}(\widetilde d_E + \widetilde d_{F^c})\big)(\nabla \widetilde d_E+\nabla \widetilde d_{F^c})\cdot (z_E+z_{F^c})\, dx\\
&\hphantom{\leq}\,\,\, +p\mathrm{e}^{Mt}\int_S\eta_\e^{p-1}\,\Psi'\big(\Delta - \mathrm{e}^{Mt}(\widetilde d_E + \widetilde d_{F^c})\big)\nabla\eta_\e\cdot(z_E+z_{F^c})\, dx\\
 &\le p\mathrm{e}^{Mt}\int_S\eta_\e^{p-1}\,\Psi'\big(\Delta - \mathrm{e}^{Mt}(\widetilde d_E + \widetilde d_{F^c})\big)\nabla\eta_\e\cdot(z_E+z_{F^c})\, dx\,,
\end{align*}
where we have also used the inequality $(z_E+z_{F^c})\cdot(\nabla \widetilde d_E+\nabla \widetilde d_{F^c})\geq 0$, which follows from \eqref{zetae1} and the convexity
and symmetry of $\p$. By H\"older Inequality and using the explicit expression of $\Psi$ and $\Psi'$, we get
$$
\partial_t \int_{S} w \, \eta_\e^p dx\leq Cp^2 \|\nabla \eta_\e\|_{L^p(\R^N)}\left(\int_{S} w\,  \eta_\e^p dx\right)^{1-\frac1p}\,,
$$ 
for some constant $C>0$ depending only on the $L^\infty$-norms of $z_E$ and $z_{F^c}$ and on $t^*$. Since $w=0$ at $t=0$, a simple ODE argument then yields
$$
\int_{S} w \, \eta_\e^p dx\leq \left(Cp\|\nabla \eta_\e\|_{L^p(\R^N)} t\right)^p
$$ 
for all $t\in (0, t^*)$.
Observing that $\|\nabla \eta_\e\|_{L^p(\R^N)}^p=\e^{p-N}\|\nabla \eta\|_{L^p(\R^N)}^p\to 0$ and $\eta_\e\nearrow 1$  as $\e\to 0^+$, we conclude that $w=0$,
and in turn  $\widetilde d_E+\widetilde d_{F^c}\geq \Delta \mathrm{e}^{-Mt}$   in $S\times (0,t^*)$. 
In particular, by~\eqref{claim2}, we have shown that 
$d^{\pso}_E+d^{\pso}_{F^c}\geq \Delta \mathrm{e}^{-Mt}$   in $S''\times (0,t^*)$.  In turn, this easily implies 
 that   $\dist (E(t), F^c(t))\geq \Delta \mathrm{e}^{-Mt}$ for $t\in (0, t^*)$ (see the end of the proof of \cite[Theorem 3.3]{CMP4}). 
This concludes the proof of the theorem. 
\end{proof}

The previous theorem easily yields a comparison principle also between level set subsolutions and supersolutions. 

\begin{theorem}\label{th:lscomp}
Let $u^0$, $v^0$ be uniformly continuous functions on $\R^N$  and let $u$, $v$ be respectively a level set subsolution with initial datum $u^0$ and a level set supersolution with initial datum $v^0$, in the sense of Definition~\ref{deflevelset1}. If $u^0\leq v^0$, then $u\leq v$.
\end{theorem}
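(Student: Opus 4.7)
The plan is to argue by contradiction. Since $u(\cdot,0)\le u^0\le v^0\le v(\cdot,0)$, if $u\not\le v$ there must exist $(x_0,t_0)$ with $t_0>0$ and $u(x_0,t_0)>v(x_0,t_0)$. I will show this is incompatible with Theorem~\ref{th:compar} by sandwiching the gap between two level values and reading off the requisite super- and sub-flows from the level set data.

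First, I pick two numbers $\lambda_1<\lambda_2$ inside the open interval $(v(x_0,t_0),u(x_0,t_0))$ that are good values for both level set conditions in Definition~\ref{deflevelset1}. The set of $\lambda$ for which $\{v\le\lambda\}$ is a superflow of \eqref{oee} with initial datum $\{v^0\le\lambda\}$ has full measure; applying the level set supersolution condition to $-u$ (with $g$ replaced by $-g$), the analogous statement says that for a.e.\ $\lambda$ the closed superlevel $\{u\ge\lambda\}$ is a superflow of \eqref{oee} with $-g$ and initial datum $\{u^0\ge\lambda\}$. The intersection of these full-measure sets meets $(v(x_0,t_0),u(x_0,t_0))$ in a set of full measure, so such a pair $\lambda_1<\lambda_2$ exists.

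Next, set $E:=\{(x,t):v(x,t)\le\lambda_1\}$, which is a superflow of \eqref{oee} with initial datum $E^0:=\{v^0\le\lambda_1\}$, and $F:=\{(x,t):u(x,t)<\lambda_2\}$, which is open because $u$ is upper semicontinuous. By the very definition of subflow, $F$ is a subflow of \eqref{oee} with any closed initial datum $F^0$ satisfying $\mathring{F^0}\subseteq\{u^0<\lambda_2\}$; a convenient choice is $F^0:=\{u^0\le\lambda'\}$ for a fixed $\lambda'\in(\lambda_1,\lambda_2)$. Combining $u^0\le v^0$ with the uniform continuity of $u^0$ then gives
\[
\Delta:=\dist^{\pso}\bigl(E^0,(F^0)^c\bigr)\;\ge\;\dist^{\pso}\bigl(\{u^0\le\lambda_1\},\{u^0\ge\lambda'\}\bigr)\;>\;0.
\]

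At this point Theorem~\ref{th:compar} applies and yields $\dist^{\pso}(E(t),F^c(t))\ge\Delta\,\mathrm{e}^{-Mt}>0$ for every $t\ge 0$; in particular $E(t_0)\cap F^c(t_0)=\emptyset$. But $v(x_0,t_0)<\lambda_1$ and $u(x_0,t_0)>\lambda_2$ force $x_0\in\{v\le\lambda_1\}(t_0)\cap\{u\ge\lambda_2\}(t_0)\subseteq E(t_0)\cap F^c(t_0)$, a contradiction. All the substantive content sits in the already-proved Theorem~\ref{th:compar}; the only genuine work here is the bookkeeping needed to recast the superlevel of $u$ as a subflow of \eqref{oee} (matching initial data through the complement/interior prescription) and the separation estimate for the initial data, which is exactly where uniform continuity of $u^0$ is used.
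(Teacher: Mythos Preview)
Your proof is correct and follows essentially the same approach as the paper's: both apply Theorem~\ref{th:compar} to a closed sublevel of $v$ (a superflow) and an open sublevel of $u$ (a subflow), with the positive initial separation coming from the uniform continuity of one of the initial data. The only cosmetic difference is that the paper argues directly that $\{v\le\lambda\}\subseteq\{u\le\lambda\}$ for every $\lambda$ (via an auxiliary pair $\lambda''<\lambda'$), whereas you phrase it as a contradiction at a single point; your insertion of the intermediate value $\lambda'\in(\lambda_1,\lambda_2)$ to verify the subflow initial condition $\mathring{F^0}\subseteq\{u^0<\lambda_2\}$ is a careful touch that the paper handles implicitly.
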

\begin{proof}
Recall that by Definition~\ref{deflevelset1} there exists a null set $ N_0\subset\R$ such that  for all $\lambda\not\in   N_0$
the sets $\{(x,t): u(x,t)<\lambda\}$ and $\{(x,t): v(x,t)\leq\lambda\}$ are respectively a subflow with initial datum $\{u^0\leq \lambda\}$ and a superflow with initial datum $\{v^0\leq \lambda\}$,  in the sense of Definition~\ref{Defsol}. Fix now $\lambda\in \R$  and 
choose $\lambda<\lambda''<\lambda'$, with $\lambda'$, $\lambda''\not\in N_0$. Since
$\{v^0\leq\lambda''\}\subset \{v^0\leq\lambda'\}\subset \{u^0\leq\lambda'\}$,
we have
$$
\dist^{\pso}(\{v^0\leq\lambda''\}, \{u^0>\lambda'\})\geq\dist^{\pso}(\{v^0\leq\lambda''\}, \{v^0>\lambda'\}):=\Delta>0\,,
$$
where the last inequality follows from the uniform continuity of $v^0$. Thus, by Theorem~\ref{th:compar}
$$
\{(x,t): v(x,t)\leq\lambda\}\subset \{(x,t): v(x,t)\leq\lambda''\}\subset \{(x,t): u(x,t)<\lambda'\}\,.
$$
Letting $\lambda'\searrow \lambda$, with $\lambda'\not\in N_0$, we conclude that 
$\{(x,t): v(x,t)\leq\lambda\}\subseteq\{(x,t): u(x,t)\leq\lambda\}$
for all $\lambda\in \R$, {which is clearly equivalent to $u\le v$.} 
\end{proof}
\subsection{Distributional versus viscosity solutions}
We show here that in the smooth cases, the notion of solution
in Definition~\ref{Defsol}
coincides with the definition of standard viscosity solutions for
geometric motions, as for instance
in~\cite{BarlSouga98}.
\noop{This property, together with
the stability observed later on in Remark~\ref{rm:stability}, will be very
helpful to establish estimates using standard approaches
for viscosity solutions.}
\begin{lemma}\label{lem:visco}
Assume $\p,\psi,\pso\in C^2(\R^N\setminus\{0\})$, and assume that $g$ is continuous also with respect to the time variable.
Let $E$ be a superflow in the sense of Definition~\ref{Defsol}.
Then, $-\chi_E$ is a viscosity supersolution of
\begin{equation}\label{eq:viscoflow}
u_t = \psi(\nabla u)\big( \Div\nabla \p(\nabla u)+g\big)
\end{equation}
{in $\R^N\times (0,T^*)$, and in fact in $\R^N\times (0,T^*]$ whenever $T^*<+\infty$,  where $T^*$ is the  extinction time of $E$ introduced in Definition \ref{Defsol}}.
\end{lemma}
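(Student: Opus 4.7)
The plan is to verify directly the viscosity supersolution property of $-\chi_E$ by testing against smooth functions from below, with the main content coming from the differential inequality in Definition~\ref{Defsol}(d) applied to the distance function of $E$.

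I would fix $\phi\in C^2(\R^N\times[0,T^*])$ and $(x_0,t_0)\in\R^N\times(0,T^*]$ such that $-\chi_E-\phi$ has a strict local minimum at $(x_0,t_0)$ with $\phi(x_0,t_0)=-\chi_E(x_0,t_0)$. The geometric nature of the equation allows a reduction to the substantive case $x_0\in\partial E(t_0)$ with $\nabla\phi(x_0,t_0)\neq 0$. Indeed, if $x_0\notin\partial E(t_0)$, the function $-\chi_E$ is locally constant in space near $x_0$ at time $t_0$ (equal to $0$ or $-1$), which forces $\phi$ to have a local maximum at $(x_0,t_0)$ and therefore $\nabla\phi(x_0,t_0)=0$, $\phi_t(x_0,t_0)\ge 0$, and $D^2\phi(x_0,t_0)\le 0$. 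Because $D^2\p(p)$ is positive semidefinite by convexity of $\p$, the LSC envelope $F_*(0,D^2\phi)$ of the geometric right-hand side is nonpositive, so the inequality $\phi_t\ge F_*(0,D^2\phi)$ holds trivially; the exceptional subcase $x_0\in\partial E(t_0)$ with $\nabla\phi(x_0,t_0)=0$ is vacuous in the standard convention for geometric level-set equations used in~\cite{BarlSouga98}.

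Now assume $x_0\in\partial E(t_0)$ and $\nabla\phi(x_0,t_0)\neq 0$, so that $\phi(x_0,t_0)=-1$. The continuity of $\phi$ and the touching condition $\phi\le -\chi_E$ yield $\phi\le -1$ on $E$ in a space-time neighborhood $U$ of $(x_0,t_0)$; hence $E(t)\cap U_t\subseteq\Sigma(t):=\{x:\phi(x,t)\le -1\}$, and $\Sigma$ is a smooth exterior barrier to $E$ in $U$ with $\partial\Sigma(t_0)$ tangent to $\partial E(t_0)$ at $x_0$ and common outer normal direction $\nabla\phi(x_0,t_0)$. Consequently the anisotropic distance functions satisfy $d_E(x,t):=\dist^{\pso}(x,E(t))\ge d_\Sigma(x,t):=\dist^{\pso}(x,\Sigma(t))$ throughout $U$, with equality at $(x_0,t_0)$. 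Since $\p\in C^2(\R^N\setminus\{0\})$ and $\pso(\nabla d_E)=1$ a.e.\ in $\{d_E>0\}$, the subdifferential selection $z\in\partial\p(\nabla d_E)$ is forced to be the classical one $z=\nabla\p(\nabla d_E)$, so \eqref{eq:supersol} reduces in $U\cap\{d_E>0\}$ to
\[
\partial_t d_E\ge \Div\nabla\p(\nabla d_E)+g-Md_E.
\]
I would evaluate this at a family of exterior points $(x_h,t_0)=(x_0+h\xi,t_0)$ with $\xi$ in the outer $\pso$-normal direction to $\partial\Sigma(t_0)$ at $x_0$ and pass to the limit $h\to 0^+$. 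Combined with a Taylor expansion of $\phi+1$ in the normal direction at $(x_0,t_0)$ and the homogeneities $\psi(\nabla\phi)=|\nabla\phi|\,\psi(\nabla\phi/|\nabla\phi|)$, $\nabla\p(\nabla\phi)=\nabla\p(\nabla\phi/|\nabla\phi|)$, identifying the normal velocity of $\partial\Sigma(t_0)$ at $x_0$ as $-\phi_t/|\nabla\phi|$ and its anisotropic curvature as $\Div\nabla\p(\nabla\phi/|\nabla\phi|)$, the limiting pointwise inequality rearranges into
\[
\phi_t(x_0,t_0)\ge \psi(\nabla\phi)\bigl(\Div\nabla\p(\nabla\phi)+g\bigr)(x_0,t_0),
\]
which is the required viscosity supersolution inequality.

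The main technical obstacle is precisely the passage to the limit $h\to 0^+$, namely the identification of $\lim_{h\to 0^+}\Div\nabla\p(\nabla d_E)(x_h,t_0)$ with the anisotropic mean curvature of the smooth barrier $\partial\Sigma(t_0)$ at $x_0$. The $C^2$ hypotheses on $\p$ and $\pso$ are essential here: they make $d_\Sigma$ classically $C^2$ in a one-sided neighborhood of $\partial\Sigma(t_0)$ and allow one to control the first- and second-order jets of $d_E$ at $(x_h,t_0)$ by squeezing $d_E$ between $d_\Sigma$ and an analogous barrier constructed from a slightly tilted test function. The endpoint $t_0=T^*$, when finite, is disposed of either by the same argument together with the left-continuity from condition (b) of Definition~\ref{Defsol}, or directly by noting that $E(s)=\emptyset$ for $s>T^*$ forces any touching at $t_0=T^*$ into one of the already-treated degenerate configurations.
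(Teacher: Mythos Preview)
There are two genuine gaps. First, the case $(x_0,t_0)\in E$ with $\nabla\phi(x_0,t_0)=0$ is \emph{not} vacuous in any of the standard conventions: after the Barles--Georgelin reduction to $D^2\phi(x_0,t_0)=0$ one still has to verify $\phi_t(x_0,t_0)\ge 0$, and for the discontinuous function $-\chi_E$ this requires an argument. The paper obtains it by contradiction: if $\phi_t(x_0,t_0)=-a<0$, one shows $d(x_0,t)\gtrsim (t_0-t)^{1/4}$ for $t<t_0$, and then the one-sided time-continuity estimate of Lemma~\ref{lem:uniformcontrol} forces $d(x_0,t_0)>0$, contradicting $x_0\in E(t_0)$. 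Your sketch also glosses over the subcase $x_0\in\mathring E(t_0)$: spatial constancy of $-\chi_E$ at time $t_0$ alone does not yield a space-time local maximum of $\phi$, since nothing prevents $x_0\notin E(t)$ for $t<t_0$ close.

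Second, and more seriously, in the main case you propose to evaluate the distributional inequality~\eqref{eq:supersol} pointwise along the normal ray $(x_0+h\xi,t_0)$ and let $h\to 0^+$. This does not work as written: even with $\p,\pso\in C^2$, the distance $d_E$ to an arbitrary closed set is only Lipschitz, so $\partial_t d_E$ and $\Div z$ are merely Radon measures, and~\eqref{eq:supersol} gives no information on the measure-zero set consisting of a single ray at a single time. The ``squeezing'' you allude to does not produce a pointwise limit of $\Div z$. The paper avoids this entirely by an integral argument: it assumes the viscosity inequality fails for the smooth barrier distance $\delta=\dist^{\pso}(\cdot,\{\phi\le -1\})$, perturbs to a strict subtangent $\delta^\eta$, and then integrates~\eqref{eq:supersol} against $\Psi'(d-\delta^\eta-\e)$ with $\Psi$ convex, nonincreasing, supported in $(-\infty,0)$. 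The monotonicity of $\partial\p$ turns the spatial terms into a sign, yielding $\partial_t\int\Psi(d-\delta^\eta-\e)\,dx\le 0$, which contradicts $\Psi(-\e)>0$ at $t=t_0$. This distributional testing is precisely the missing idea in your approach.
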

A converse statement is also true, see~\cite{CMP4,CMNP-visco}.
\begin{proof} We follow the proof of a similar statement in~\cite[Appendix]{CMP4}. Let $\vp(x,t)$ be a smooth test function and assume
$-\chi_E-\vp$ has a (strict) local minimum at $(\bar x,\bar t)$,
$0<\bar t\le T^*$. In other words, we can assume that near $(\bar x,\bar t)$,
$-\chi_E(x,t)\ge \vp(x,t)$, while $-\chi_E(\bar x,\bar t)=\vp(\bar x,\bar t)$.
We can also assume that the latter quantity is $-1$
{(\textit{i.e.}, $(\bar x,\bar t)\in E$)}, as if it is zero
then we trivially deduce that $\nabla\vp(\bar x,\bar t)=0$
while $\partial_t\vp(\bar x,\bar t)\ge 0$.

If $\nabla\vp(\bar x,\bar t)=0$,
thanks to~\cite[Prop.~2.2]{BarlesGeorgelin} we can
assume that also the spatial Hessian $D^2\vp(\bar x,\bar t)=0$ (and then
$D^3\vp(\bar x,\bar t)=0$, $D^4\vp(\bar x,\bar t)\le 0$).
As usual, if we assume that $-a=\partial_t\vp(\bar x,\bar t)<0$
and choose $a'<a$,
we observe that near $\bar x$, $\partial_t\vp(x,\bar t)< -a'$ and,
for $t\le\bar t$ close enough to $\bar t$ and $x$ close enough to $\bar x$,
\[
\vp(x,t)=\vp(x,\bar t)+\partial_t\vp(x,\bar t)(t-\bar t)+o(|t-\bar t|)
\ge \vp(x,\bar t) + a'(\bar t-t).
\]
Hence,
one has that near $(\bar x,\bar t)$ (for $t\le \bar t$), for some $\gamma>0$
\[
\vp(x,t)\ge -1+a'(\bar t-t)-\gamma |x-\bar x|^4.
\]
It follows that for such $t$,
$\mathcal{N}\cap \{x:\gamma |x-\bar x|^4< a'(\bar t-t)\}\cap E(t)=\emptyset$,
where $\mathcal{N}$ is a neighborhood of $\bar x$.
For $\bar t-t>0$ small enough we deduce that
$B(\bar x, (a'(\bar t-t)/\gamma)^{1/4})$ does not meet $E(t)$,
in other words $d(\bar x,t)\ge c((a'/\gamma)(\bar t-t))^{1/4}$
for constant $c$ depending only on $\psi$.
It then follows from Lemma~\ref{lem:uniformcontrol},
%below (whose proof, obviously, does not depend on Lemma~\ref{lem:visco}),
and more precisely from~\eqref{eq:controlrightdist},
that (provided $\bar t-t\le\tau_0$ where $\tau_0$ is as in 
Lemma~\ref{lem:uniformcontrol})
\begin{equation*}
d(\bar x,\bar t) \ge (\bar t - t)^{\frac{1}{4}} 
\left(c \left(\frac{a'}{\gamma}\right)^{1/4}e^{-5M(\bar t-t)} - \chi(\bar t-t)^{\frac{1}{4}}\right),
\end{equation*}
which is positive if $t$ is close enough to $\bar t$, a contradiction.
Hence $\partial_t \vp(\bar x,\bar t)\ge 0$.

If, on the other hand,
$\nabla\vp(\bar x,\bar t)\neq 0$ then we can introduce the set $F=\{\vp\le -1\}$, and we have that $F(t)$ is a smooth set near $\bar x$, for
$t\le \bar t$ close to $\bar t$, which contains $E(t)$,
with a contact at $(\bar x,\bar t)$.
We then let $\delta(x,t) = \dist^{\pso}(x,F(t))$, which
at least $C^2$ near $(\bar x,\bar t)$ (as $\psi,\pso$ are $C^2$)
and is touching $d$ from below at all the points
$(\bar x+s\nabla\p(\nu_{F(\bar t)}),\bar t)$ for $s>0$ small.

Assume that
\begin{equation}\label{eq:contravisco}
\partial_t \delta < \psi(\nabla \delta)(\Div\nabla\p(\nabla\delta)
+g) = D^2\p(\nabla\delta):D^2\delta+g
\end{equation}
at $(\bar x,\bar t)$. Then, by continuity,
we can find $\bar s>0$ small and a neighborhood
$B=\{ |x-\bar x|<\rho, \bar t-\rho<t\le\bar t\}$
of $(\bar x,\bar t)$ {in $\R^N\times (0,\bar t]$} where
\[
\partial_t \delta <  D^2\p(\nabla\delta):D^2\delta+g-M\bar s.
\]
Possibly reducing $\rho$ and using (\textit{cf}~Rem.~\ref{rem:sci}) the
left-continuity of $d$, since $d(\bar x,\bar t)=0$, we can also
assume that $d\le \bar s$ in $B$.

We choose then $s<\bar s$ small enough so that
$\bar x^s=\bar x+s\nabla\p(\nu_{F(\bar t)})$ is such that
$|\bar x^s-\bar x|<\rho$, and for $\eta>0$ small we define
$\delta^\eta(x,t) = \delta(x,t)-\eta(|x-\bar x^s|^2+|t-\bar t|^2)/2$.
Then $d-\delta^\eta$ has a unique strict minimum point at $(\bar x^s,\bar t)$
in $B$.
Moreover if $\eta$ is small enough, by continuity, we still have that
\[
\partial_t \delta^\eta <  D^2\p(\nabla\delta^\eta):D^2\delta^\eta+g-M\bar s.
\]
in $B$.

Then we continue as in~\cite[Appendix]{CMP4}:
given $\Psi\in C^\infty(\R)$ nonincreasing, convex, vanishing on $\R_+$
and positive on $(-\infty,0)$, we introduce $w=\Psi(d-\delta^\eta-\e)\chi_B$
for $\e$ small enough. We then show that,
thanks to~\eqref{eq:supersol}, for $\bar t-\rho<t< \bar t$,
\begin{multline*}
\partial_t \int w dx \le \int_B \Psi'(d-\delta^\eta-\e)
(\Div z+g - Md - \Div\nabla\p(\nabla\delta^\eta) - g +M\bar s) dx
\\
\le -\int_B \Psi''(d-\delta^\eta-\e)(\nabla d-\nabla \delta^\eta)\cdot
(z-\nabla\p(\nabla\delta^\eta)) dx + M\int_B \Psi'(d-\delta^\eta-\e)(\bar s-d)dx
\le 0
\end{multline*}
as we have assumed $d\le\bar s$ in $B$.
This is in contradiction with
$\Psi(d(\bar x^s,\bar t)-\delta^\eta(\bar x^s,\bar t)-\e)=\Psi(-\e)>0$,
and it follows that~\eqref{eq:contravisco} cannot hold:
one must have
\[
\partial_t \delta \ge \psi(\nabla \delta)(\Div\nabla\p(\nabla\delta)
+g)
\]
at $(\bar x,\bar t)$. Since this equation is geometric and
the level set $\{\delta\le 0\}$ is $F$, which is the level $-1$ of $\vp$
(near $(\bar x,\bar t)$), we also deduce that at the same point,
\[
\partial_t \vp \ge \psi(\nabla \vp)(\Div\nabla\p(\nabla\vp)
+g)
\]
so that $-\chi_E$ is a supersolution of~\eqref{eq:viscoflow}.
\end{proof}

\section{Minimizing movements}\label{Minimizing movements}
 As in~\cite{CMP4}, {in order to build solutions to our
geometric evolution problem,}
we implement a variant of the Almgren-Taylor-Wang minimizing movements scheme~\cite{ATW} (in short the ATW scheme) introduced in~\cite{Chambolle, CaCha}. In
{Section}~\ref{subsec:ATW} we adapt this construction to take into account the forcing term, as in~\cite{ChNoIFB}.  We start by presenting some preliminary properties of the incremental problem.

\subsection{The incremental problem} We recall
that given $z\in L^{\infty}(\R^N; \R^N)$ with $\Div z\in L^2_{loc}(\R^N)$ and  $w\in BV_{loc}(\R^N)\cap L^2_{loc}(\R^N)$,
 $z\cdot Dw$ denotes the Radon measure associated with the linear functional 
$$
L\vp:=-\int_{\R^N}w\,\vp \Div z\, dx- \int_{\R^N}w\,z\cdot \nabla \vp \, dx \qquad \text{ for all } \vp\in C^{\infty}_c(\R^N),
$$
see~\cite{Anz:83}. 
We recall the following {result} %useful  proposition. 
\begin{proposition}\label{prop:ATW}
{Let $f\in L^2_{loc}(\R^N)$ and $h>0$}.
There exists a field {$z\in L^\infty(\R^N;\R^N)$} and a  
unique function
$u \in BV_{loc}(\R^N)\cap L^2_{loc}(\R^N)$
such that the pair $(u,z)$ satisfies 
\begin{equation}\label{eq:iterk2}
\left\{
\begin{array}{ll}
 -h \, \Div z  + u = f \qquad &\text{ in } \mathcal D'(\R^N),  \\
 \po(z)\le 1 \quad & \text{ a.e. in } \R^N,\\
z\cdot Du = \p(Du) \qquad &\text{ in  the sense of measures}.
\end{array}
\right.
\end{equation}
Moreover, for any $R>0$ and $v\in BV(B_R)$ with $\spt (u-v)\Subset B_R$,
\[
\p(Du)(B_R) +\frac{1}{2h} \int_{B_R}{(u-f)^2}\, dx \le
\p(Dv)(B_R) +\frac{1}{2h} \int_{B_R}{(v-f)^2}\, dx,
\]
and for every $s\in \R$ the set $E_s:=\{x\in \R^N\,:\, u(x)\le s\}$ solves the minimization problem
\[
\min_{F\Delta E_s\Subset B_R}  P_\p(F;B_R) + \frac{1}{h}  \int_{F\cap B_R} (f(x) -s) \, dx.
\]

If $f_1\leq f_2$ and if $u_1$, $u_2$ are the corresponding solutions to \eqref{eq:iterk2} (with $f$ replaced by $f_1$ and $f_2$, respectively), then $u_1\leq u_2$. 

Finally if in addition $f$ is Lipschitz with
{$\psi(\nabla f)\le 1$ for some norm $\psi$,}
then  the unique solution $u$ of \eqref{eq:iterk2}
is also Lipschitz 
and satisfies  {$\psi(\nabla u)\le 1$} a.e.~in $\R^N$.
As a consequence, \eqref{eq:iterk2} is equivalent to 
\begin{equation}\label{eq:iterk3}
\left\{
\begin{array}{lll}
 -h \, \Div z  + u = f  & \text{in }\mathcal D'(\R^N),\\
 z  \in \pa\p(\nabla u)  & \text{a.e. in $\R^N$}.\\ 
\end{array}
\right.
\end{equation}
\end{proposition}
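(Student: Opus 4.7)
The plan for existence and uniqueness is to minimize the strictly convex functional
$$
J(v) := P_\p(v) + \frac{1}{2h}\int_{\R^N}(v-f)^2\,dx
$$
in a localized sense, that is, by considering minimization problems on balls $B_R$ among competitors that coincide with (a truncation of) $f$ outside and sending $R\to\infty$. Lower semicontinuity of $P_\p$ with respect to $L^1_{loc}$ convergence, the coercivity furnished by the quadratic term, and its strict convexity give existence and uniqueness of a local minimizer $u\in BV_{loc}(\R^N)\cap L^2_{loc}(\R^N)$, together with the stated energy comparison against any $v$ with $\spt(u-v)\Subset B_R$. The field $z$ arises as a Lagrange multiplier for this minimization: by convex duality, the first-order condition yields $z\in L^\infty(\R^N;\R^N)$ with $\po(z)\le 1$, the Anzellotti pairing identity $z\cdot Du=\p(Du)$ as measures, and the distributional equation $u-h\Div z=f$.

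The level-set characterization is then obtained from the generalized coarea formula: writing (up to an additive constant)
$$
J(v) \;=\; \int_\R \Bigl[P_\p(\{v\le s\}) + \frac{1}{h}\int_{\{v\le s\}}(f-s)\,dx\Bigr]\,ds,
$$
the minimality of $u$ transfers, for a.e.\ $s$, to minimality of $E_s=\{u\le s\}$ for the prescribed-curvature problem in the statement. The comparison $f_1\le f_2\Rightarrow u_1\le u_2$ now follows from the corresponding comparison at the level-set level: letting $F:=\{u_2\le s\}$ and $G:=\{u_1\le s\}$, one tests the minimality of $F$ against $F\cap G$ and of $G$ against $F\cup G$, adds the two inequalities, and uses submodularity of $P_\p$ to conclude $\int_{F\setminus G}(f_2-f_1)\,dx\le 0$; a standard perturbation $f_2\leadsto f_2+\e$ together with a passage to the limit yields $F\subseteq G$, i.e., $u_1\le u_2$.

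The Lipschitz estimate is the step most specific to the present setting, and I expect it to be the main delicate point. From $\psi(\nabla f)\le 1$ a.e.\ and the definition of the polar norm, integration along a segment gives
$$
f(x+\tau)\;\le\; f(x) + \pso(\tau) \qquad \text{for all } x,\tau\in\R^N.
$$
Since the incremental problem is translation invariant and commutes with addition of constants, $x\mapsto u(x+\tau)$ is the unique solution with datum $f(\cdot+\tau)$ while $x\mapsto u(x)+\pso(\tau)$ is the unique solution with datum $f(\cdot)+\pso(\tau)$. The comparison principle just established then gives $u(x+\tau)\le u(x)+\pso(\tau)$, which is precisely $\psi(\nabla u)\le 1$ a.e. Once $u$ is Lipschitz, $Du$ is absolutely continuous and the pairing $z\cdot Du$ reduces to $(z\cdot\nabla u)\,dx$; the conditions $\po(z)\le 1$ and $z\cdot\nabla u=\p(\nabla u)$ a.e.\ are then, via the characterization~\eqref{subp}, exactly the inclusion $z\in\partial\p(\nabla u)$ a.e., which yields the equivalence of \eqref{eq:iterk2} and \eqref{eq:iterk3}.
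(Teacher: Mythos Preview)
The paper does not actually give a proof of this proposition; it simply cites \cite[Theorem~2]{CaCha} and \cite[Theorem~3.3]{AlChNo}. Your outline is the standard route taken in those references: existence and uniqueness via local minimization of the strictly convex functional, the field $z$ arising from the Euler--Lagrange condition and Anzellotti's pairing, the level-set minimality via the generalized coarea formula, and the Lipschitz bound via translation invariance plus comparison. In this sense your proposal matches the paper's (deferred) proof.

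One point deserves more care. Your comparison argument tests the minimality of $F=\{u_2\le s\}$ against $F\cap G$ and of $G=\{u_1\le s\}$ against $F\cup G$, then uses submodularity and an $\e$-perturbation. For this to be legitimate in the present setting you need $F\setminus G$ to be an admissible perturbation, i.e.\ to be compactly contained in some ball (the minimality in the statement is only over competitors with $F\Delta E_s\Subset B_R$), and you need $|F\setminus G|<\infty$ for the $\e$-trick to bite. For general $f_1,f_2\in L^2_{loc}(\R^N)$ with possibly unbounded sublevel sets, neither is automatic. The cited references, and indeed the paper's own Lemma~\ref{lm:localcomp} for a closely related statement, handle comparison instead by a direct PDE argument: one subtracts the two equations, multiplies by $\psi(u_1-u_2)\eta^p$ with a smooth cutoff $\eta$ and $p>N$, uses the monotonicity $(z_1-z_2)\cdot(\nabla u_1-\nabla u_2)\ge 0$, and lets the cutoff exhaust $\R^N$; the choice $p>N$ makes the boundary term vanish in the limit. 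Either patch your level-set argument with such a cutoff, or replace it by this direct computation.
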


\begin{proof} See \cite[Theorem~2]{CaCha}, \cite[Theorem~3.3]{AlChNo}. %,\cite{}.
\end{proof}

{The comparison property in the previous proposition has
a ``local'' version, which results from the geometric
character of \eqref{eq:iterk2}:}
\begin{lemma}\label{lm:localcomp} Let $f_1$, $f_2\in L^2_{loc}(\R^N)$ and {let $(u_i, z_i)$, $i=1,2$, be  solutions} to 
\eqref{eq:iterk2} with $f$ replaced by $f_i$.
Assume also that for some $\lambda\in \R$,
% the sets $\{u_i\leq \lambda\}$, $i=1,2$,  have finite masure and    
\beq\label{localcomp}
\left|(\{ u_1 \leq \lambda\}\cup\{u_2\leq \lambda\})\setminus \{ f_1 \leq f_2 \} \right|=0\qquad\text{for }i=1,2\,.
\eeq
Then, $\min\{u_1,\lambda\}\leq \min\{u_2 ,\lambda\}$~a.e.
\end{lemma}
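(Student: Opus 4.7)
The proof plan is to test the distributional PDE satisfied by the difference $u_1-u_2$ against a suitable nonnegative function. Subtracting the two instances of \eqref{eq:iterk3} gives
$$(u_1-u_2) - h\Div(z_1-z_2) = f_1-f_2 \quad \text{in } \mathcal{D}'(\R^N).$$
I would test this identity against $\vp := (\min\{u_1,\lambda\} - \min\{u_2,\lambda\})_+$, which is nonnegative and belongs to $BV_{loc}\cap L^2_{loc}$. The Anzellotti pairing between $L^\infty$ divergence-measure fields (each $z_i$ is one, since $\po(z_i)\le 1$ and $\Div z_i \in L^2_{loc}$) and $BV$ functions, combined with a cutoff argument $\eta_R \to 1$, should yield
$$\int_{\R^N}\vp(u_1-u_2)\,dx + h\int_{\R^N}(z_1-z_2)\cdot D\vp = \int_{\R^N}\vp(f_1-f_2)\,dx,$$
where $(z_1-z_2)\cdot D\vp$ denotes the associated Radon measure.

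I would then analyze the sign of each term. On $\{\vp>0\}$, the strict inequality $\min(u_1,\lambda)>\min(u_2,\lambda)$ forces $u_2<\lambda$ and $u_1>u_2$, so $\{\vp>0\}\subseteq \{u_2\le\lambda\}\subseteq \{u_1\le\lambda\}\cup\{u_2\le\lambda\}$; by hypothesis \eqref{localcomp} this gives $f_1\le f_2$ a.e.\ on the support of $\vp$, so $\int\vp(f_1-f_2)\,dx\le 0$. The term $\int\vp(u_1-u_2)\,dx$ is clearly $\ge 0$ and is strictly positive on $\{\vp>0\}$. The crucial step is to show that $(z_1-z_2)\cdot D\vp$ is a nonnegative measure: by the $BV$ chain rule, on the support of $\vp$ the measure $D\vp$ coincides with $Du_1-Du_2$ on $\{u_1<\lambda\}$ and with $-Du_2$ on $\{u_1>\lambda\}$ (for a.e.\ choice of $\lambda$, the level set $\{u_1=\lambda\}$ is negligible). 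Using the identities $z_i\cdot Du_i=\p(Du_i)$ from \eqref{eq:iterk2} together with the polarity inequality $z\cdot Dw\le\po(z)\p(Dw)\le\p(Dw)$, one computes
$(z_1-z_2)\cdot(Du_1-Du_2) = \p(Du_1)+\p(Du_2) - z_1\cdot Du_2 - z_2\cdot Du_1\ge 0$
on the first region, and analogously $(z_1-z_2)\cdot(-Du_2) = \p(Du_2) - z_1\cdot Du_2\ge 0$ on the second.

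Combining the three facts gives $0\le \int\vp(u_1-u_2)\,dx\le \int\vp(f_1-f_2)\,dx\le 0$, so $\int\vp(u_1-u_2)\,dx=0$. Since $u_1>u_2$ on $\{\vp>0\}$, this forces $\vp=0$ a.e., which is precisely the desired conclusion. The main obstacle I expect is the rigorous justification of the Anzellotti pairing on $\R^N$: since $\vp$ need not decay at infinity, one must control the flux term $h\int\vp\,\nabla\eta_R\cdot(z_1-z_2)\,dx$ along a sequence of cutoffs $\eta_R$ with $\eta_R=1$ on $B_R$. The nonnegativity of $(z_1-z_2)\cdot D\vp$ on the support of $\vp$, together with the $L^\infty$ bound on $z_i$, is what makes it possible to absorb this flux and pass to the limit to obtain the identity above.
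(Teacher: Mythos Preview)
Your strategy is essentially the same as the paper's: test the difference of the two equations against a nonnegative function of $(v_1-v_2)^+$ (where $v_i=\min\{u_i,\lambda\}$) localized by a cutoff, exploit the monotonicity $(z_1-z_2)\cdot(\nabla v_1-\nabla v_2)\ge 0$ (equivalently, $z_i\in\partial\p(\nabla v_i)$ a.e.), and use the hypothesis~\eqref{localcomp} to make the right-hand side nonpositive. Your sign analysis of the three terms is correct.

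The gap is the step you yourself flag: the control of the flux term $h\int \vp\,\nabla\eta_R\cdot(z_1-z_2)\,dx$ as $R\to\infty$. Your assertion that the nonnegativity of the pairing together with the $L^\infty$ bound on $z_i$ ``makes it possible to absorb this flux'' is not justified, and in fact the $L^2$-type test function $\vp=(v_1-v_2)^+$ does not appear to close. With $\eta_R(\cdot)=\eta(\cdot/R)$, the flux is bounded by $\tfrac{Ch}{R}\int_{B_{2R}\setminus B_R}\vp$, and without a priori integrability of $\vp$ this can grow like $R^{N-1}$; the resulting inequality of the type $\int_{B_R}\vp^2 \le Ch\,R^{N/2-1}\bigl(\int_{B_{2R}}\vp^2\bigr)^{1/2}$ does not self-improve to $\vp=0$ when $N\ge 2$.

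The paper closes this by testing instead against $\psi(v_1-v_2)\,\eta^p$ with $\psi(s)\to (s^+)^{p-1}$ and $p>N$. After H\"older one obtains the closed estimate
\[
\|(v_1-v_2)^+\eta\|_{L^p(\R^N)} \le 2phC\,\|\nabla\eta\|_{L^p(\R^N)},
\]
and with $\eta=\eta(\cdot/R)$ the right-hand side scales like $R^{N/p-1}\to 0$ precisely because $p>N$. This $L^p$ trick with $p>N$ is the missing ingredient; once you replace your test function $\vp$ by $\vp^{p-1}$ (paired with $\eta^p$), your argument goes through.
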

\begin{proof}
Let us set $v_i:=\min\{u_i, \lambda\}$ for $i=1,2$ and observe that 
\beq\label{subvi}
z_i\in \pa\p(\nabla v_i)\quad {\textit{a.e.}. }
\eeq
Writing \eqref{eq:iterk2} for $u_i$ and subtracting the equations we get
\beq\label{u1-u2}
-h \Div (z_1-z_2) + (u_1-u_1) = f_1-f_2\,.
\eeq
Let  $\psi$ be a smooth, increasing,
nonnegative function with support in $(0, +\infty)$ and $\eta\in C_c^\infty(\R^N;\R_+)$, {and let $p>d$.}
First notice that
\[
{\int_{\R^N} (v_1-v_2)\psi(v_1-v_2)  \eta^p\, dx= \int_{\{v_1>v_2\}} (v_1-v_2)\psi(v_1-v_2)  \eta^p\, dx \leq
\int_{\R^N} (u_1-u_2 )\psi(v_1-v_2)  \eta^p\, dx}
\]
since it can be easily checked that  $v_1-v_2\leq u_1-u_2 $ in $\{v_1>v_2\}$.
Thus, from \eqref{u1-u2} we deduce that
{\begin{multline*}
p h\int_{\R^N}  (z_1-z_2)\cdot \nabla\eta\, \psi(v_1-v_2)\eta^{p-1} dx
+ h\int_{\R^N}  (z_1-z_2)\cdot (\nabla v_1-\nabla v_2)\psi'(v_1-v_2) \eta^p\, dx
\\+\int_{\R^N} (v_1-v_2) \psi(v_1-v_2)  \eta^p\, dx
\leq \int_{\R^N} (f_1-f_2)\psi(v_1-v_2) \eta^p\, dx.
\end{multline*}
Notice  that the last integral in this equation is nonpositive,
since by \eqref{localcomp}, the set $\{f_1>f_2\}$ is contained
(up to a negligible set) in
$\{u_1>\lambda\}\cap \{u_2>\lambda\}\subseteq \{v_1=v_2\}$.
Hence, using also that $(z_2-z_1)\cdot(\nabla v_2-\nabla v_1)\ge 0$
thanks to~\eqref{subvi}, we deduce
\[
p h\int_{\R^N}  (z_1-z_2)\cdot \nabla\eta\, \psi(v_1-v_2)\eta^{p-1} dx
+\int_{\R^N} (v_1-v_2)\psi(v_1-v_2)\eta^p\, dx  \le 0.
\]}

Letting {$\psi(s)\to (s^+)^{p-1}$} we obtain
{\begin{multline*}
\|(v_1-v_2)^+\eta\|_{L^p(\R^N)}^p \le
- p h \int_{\R^N} (z_1-z_2)\cdot\nabla\eta\, \left((v_1-v_2)^+\eta\right)^{p-1}dx
\\
\le p h \|(z_1-z_2)\nabla \eta\|_{L^p(\R^N)} \|(v_1-v_2)^+\eta\|_{L^p(\R^N)}^{p-1}
\end{multline*}
so that % either $(v_1-v_2)^+\eta=0$, or
\[
\|(v_1-v_2)^+\eta\|_{L^p(\R^N)} \le 2 p h C \|\nabla \eta\|_{L^p(\R^N)}
\]
where $C=\max_{\partial\p(0)} |z|$.
}
Replacing now $\eta(\cdot)$ with $\eta(\cdot/R)$, $R>0$, assuming
$\eta(0)=1$, {we obtain
\[
\|(v_1-v_2)^+\eta(\cdot/R)\|_{L^p(\R^N)} \le
 2 p h C R^{d/p-1}\|\nabla \eta\|_{L^p(\R^N)} \stackrel{R\to\infty}{\longrightarrow} 0
\]
as we have assumed $p>d$. It follows that $(v_1-v_2)^+=0$ a.e.,
which is the thesis of the Lemma.}
\end{proof}

\noop{The following ``local'' version of the comparison property stated in the previous proposition  is a consequence of the geometricity of \eqref{eq:iterk2}.
\begin{lemma}\label{lm:localcomp} Let $f_1$, $f_2\in L^2_{loc}(\R^N)$ and {let $(u_i, z_i)$, $i=1,2$, be  solutions} to 
\eqref{eq:iterk2} with $f$ replaced by $f_i$. Assume also that for some $\lambda\in \R$  the sets $\{u_i\leq \lambda\}$, $i=1,2$,  have finite masure and    
\beq\label{localcomp}
\left|(\{ u_1 \leq \lambda\}\cup\{u_2\leq \lambda\})\setminus \{ f_1 \leq f_2 \} \right|=0\qquad\text{for }i=1,2\,.
\eeq
Then $\min\{u_1,\lambda\}\leq \min\{u_2 ,\lambda\}$ almost everywhere.
\end{lemma}
\begin{proof}
Let us set $v_i:=\min\{u_i, \lambda\}$ for $i=1,2$ and observe that 
\beq\label{subvi}
z_i\in \pa\p(\nabla v_i)\,. 
\eeq
Writing \eqref{eq:iterk2} for $u_i$ and subtracting the equations we get
\beq\label{u1-u2}
-h \Div (z_1-z_2) + (u_1-u_1) = f_1-f_2\,.
\eeq
Let  $\psi$ be a  smooth increasing
and nonnegative function with support in $(0, +\infty)$ and $\eta\in C_c^\infty(\R^N;\R_+)$.
 First notice that   
\begin{multline*}
\int_{\R^N} (v_1-v_2)\psi(v_1-v_2)  \eta\, dx= \int_{\{v_1>v_2\}} (v_1-v_2)\psi(v_1-v_2)  \eta\, dx\\ \leq
\int_{\R^N} (u_1-u_2 )\psi(v_1-v_2)  \eta\, dx
\end{multline*}
since it can be easily checked that  $v_1-v_2\leq u_1-u_2 $ in $\{v_1>v_2\}$.
Thus, from \eqref{u1-u2} we deduce that
\begin{multline*}
h\int_{\R^N}  (z_1-z_2)\cdot \nabla\eta\, \psi(v_1-v_2) dx
+ h\int_{\R^N}  (z_1-z_2)\cdot (\nabla v_1-\nabla v_2)\psi'(v_1-v_2) \eta\, dx
\\+\int_{\R^N} (v_1-v_2) {\psi(v_1-v_2)}  \eta\, dx
\leq \int_{\R^N} (f_1-f_2)\psi(v_1-v_2) \eta\, dx.
\end{multline*}

In turn, using $(z_2-z_1)\cdot(\nabla v_2-\nabla v_1)\ge 0$,  which follows from \eqref{subvi} and the convexity of $\p$, we deduce
\begin{multline}\label{eq:godsavethequeen}
h\int_{\R^N}  (z_1-z_2)\cdot \nabla\eta\, \psi(v_1-v_2) dx
+\int_{\R^N}(v_1-v_2) {\psi(v_1-v_2)} \eta\, dx
\\ \le 
\int_{\R^N} (f_1-f_2) {\psi(v_1-v_2)} \eta\, dx.
\end{multline}
Notice now  that by \eqref{localcomp} the set $\{f_1>f_2\}$ is contained, up to a set of measure zero, in 
$\{u_1>\lambda\}\cap \{u_2>\lambda\}$ and thus in $\{v_1-v_2=0\}$.
It follows that  the last integral in~\eqref{eq:godsavethequeen} is nonpositive and therefore we have 
\[
h\int_{\R^N}  (z_1-z_2)\cdot \nabla\eta\, \psi(v_1-v_2) dx
+\int_{\R^N} (v_1-v_2)\psi(v_1-v_2)\eta\, dx  \le 0.
\]
Letting $\psi(s)$ converge to $\chi_{\{s> 0\}}$ we obtain
\[
h\int_{\{v_1>v_2\}}  (z_1-z_2)\cdot \nabla\eta\, dx
+\int_{\R^N} (v_1-v_2)^+\eta\, dx  \le 0.
\]
Replacing now $\eta(\cdot)$ with $\eta(\cdot/R)$, $R>0$, assuming
$\eta(0)=1$,  sending $R\to\infty$, and using the finiteness  of $|\{v_1>v_2\}|$ we deduce that $(v_1-v_2)^+=0$
a.e., that is the thesis.
\end{proof}}

We conclude this subsection with  the following useful lemma, {proved in \cite[page 1576]{CaCha}.}
\begin{lemma}\label{lm:explicit}
Let $R> 0$ and $u$ the solution to \eqref{eq:iterk3}, with $f:=c_1(\po-R)\lor c_2 (\po-R)$, where $0<c_1\leq c_2$. Then $u$ is given by 
 $$
 u(x)=
 \begin{cases}
\sqrt{c_1h}\frac{2N}{\sqrt{N+1}}-c_1R  & \textup{ if } \po(x)\le \sqrt{\frac{h}{c_1}(N+1)},\\
f(x)+h\frac{N-1}{\po(x)} & \textup{ otherwise,}
\end{cases}
 $$ 
as long as $h/c_1\leq R^2/(N+1)$.
\end{lemma}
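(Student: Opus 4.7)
The plan is to construct an explicit candidate pair $(u,z)$, radial with respect to $\po$, and then to invoke the uniqueness part of Proposition~\ref{prop:ATW}. Since $f(x)$ depends on $x$ only through $\po(x)$, the natural ansatz is $u(x) = U(\po(x))$ with a Cahn--Hoffman field $z$ parallel to the radial direction. I would set $r^* := \sqrt{(N+1)h/c_1}$; the standing hypothesis $h/c_1 \le R^2/(N+1)$ is precisely $r^* \le R$, which ensures that $f(x) = c_1(\po(x)-R)$ on all of $\{\po \le r^*\}$, a consistency condition for the inner construction.

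In the exterior region $\{\po > r^*\}$ I would take $z(x) = x/\po(x)$, which by~\eqref{subp} is a canonical selection satisfying $\po(z)=1$ and $z\cdot \nabla\po = 1$, and for which a direct computation gives $\Div z = (N-1)/\po$. Imposing $-h\Div z + u = f$ then forces $u = f + h(N-1)/\po$, matching the second branch of the claimed formula; the derivative $U'(\po) = c_j - h(N-1)/\po^2$ is strictly positive on $(r^*,+\infty)$ (for $j=1$ on $(r^*,R]$ and $j=2$ on $(R,+\infty)$), which yields $z \in \pa\p(\nabla u)$ a.e.\ via one-homogeneity.

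Inside $\{\po \le r^*\}$ I would look for $u \equiv c$ constant; then $\nabla u = 0$ and $\pa\p(0)$ is the full polar unit ball, so $z$ need only satisfy the PDE together with $\po(z)\le 1$ and the matching $z = x/\po$ on $\{\po = r^*\}$. Trying the ansatz $z(x) = (A + B\po(x))\,x$ gives $\Div z = NA + (N+1)B\po$ and $\po(z) = \po(A+B\po)$; matching the inner equation forces $B = -c_1/((N+1)h)$ and $NhA = c + c_1 R$, the interface condition reads $A + Br^* = 1/r^*$, and the bound $\po(z)\le 1$ on $[0,r^*]$ requires the quadratic to attain its maximum at the endpoint, i.e.\ $A = 2/r^*$. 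These relations uniquely determine $A$, $B$, the radius $r^*$, and $c = 2Nh/r^* - c_1 R = 2N\sqrt{c_1 h/(N+1)} - c_1 R$, which is the stated value.

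It remains to paste the two pieces. The normal component of $z$ is continuous across $\{\po = r^*\}$ by construction, and $z = x/\po$ is smooth across $\{\po = R\}$, so $\Div z$ carries no singular contribution along either interface; the identity $-h\Div z + u = f$ reduces to a pointwise verification in each region, and a short arithmetic check using $c_1 (r^*)^2 = (N+1)h$ confirms that $u$ is continuous at $\po = r^*$ with the stated value $c$. Thus $(u,z)$ solves~\eqref{eq:iterk3}, and uniqueness from Proposition~\ref{prop:ATW} concludes. The main obstacle I anticipate is the inner construction: because $\pa\p(0)$ is the entire polar unit ball rather than a singleton, the subdifferential inclusion alone does not fix $z$, and one must identify the unique radial-quadratic selection whose divergence absorbs the linear excess $(f-c)/h$ and whose $\po$-norm saturates exactly at the interface --- this coupling is what pins the radius down to $r^* = \sqrt{(N+1)h/c_1}$ and explains the precise form of the hypothesis.
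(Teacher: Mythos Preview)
Your proposal is correct and reconstructs the standard radial argument that the paper merely cites (to~\cite[page 1576]{CaCha}) without detailing. One logical slip is worth flagging: you write that ``the bound $\po(z)\le 1$ on $[0,r^*]$ requires the quadratic to attain its maximum at the endpoint, i.e.\ $A = 2/r^*$'', but the inequality $\po(z)\le 1$ together with $g(r^*)=1$ only forces $-A/(2B)\ge r^*$, not equality. In fact, since you fix $r^*=\sqrt{(N+1)h/c_1}$ at the outset and $B=-c_1/((N+1)h)=-1/(r^*)^2$, the interface condition $A+Br^*=1/r^*$ alone already yields $A=2/r^*$; the bound $\po(z)\le 1$ is then a \emph{verification} (which indeed holds, with the maximum exactly at $r^*$), not a determining equation. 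The continuity of $u$ across $\po=r^*$---which you rightly check at the end---is what makes the construction consistent, since $z$ has continuous radial component there while $f$ is continuous, so $u=f+h\Div z$ must match from both sides. With this clarification the argument is complete, and it works for any norm $\po$ (including crystalline $\p$): the fields $x/\po$ and $(A+B\po)x$ are locally Lipschitz away from the origin, hence their distributional divergences coincide with the a.e.\ pointwise formulas, and no singular part appears at the interfaces $\po=r^*$ or $\po=R$.
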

%\begin{proof}
%See \cite[page 1576]{CaCha}.
%\end{proof}

\subsection{The ATW scheme}\label{subsec:ATW}
Let $\p$, $\psi$ and $g$ satisfy all the assumptions stated in Subsection~\ref{stass}. Set
$$
G(\cdot ,t):= \int_0^t g(\cdot,s) \, ds\,.
$$
%Without loss of generality (and to simplify the presentation) we may assume that $g$ is globally bounded not only in space but also in in time
%(otherwise, the arguments below work provided that one restricts oneself to any time interval $(0, T)$). 
{Let} $E^0\subset \R^N$ be closed. Fix a time-step $h>0$ and set
$E^0_h:=E^0$. We then inductively define $E_h^{k+1}$ (for all $k\in \N$) according to the following  procedure: 
If {$E_h^{k}\not\in\{ \emptyset$, $\R^N\}$}, then let   $(u_h^{k+1},z_h^{k+1}) :\R^N\to\R\times \R^N$ satisfy
\begin{equation}\label{eq:iterk}
\left\{
\begin{array}{lll}
 -h \, \Div z_h^{k+1} + u_h^{k+1} = d^{\pso}_{E_h^k} +  G(\cdot,(k+1)h)-G(\cdot, kh) &\text{in }\mathcal D'(\R^N),  \\
 z_h^{k+1} \in \pa\p(\nabla u_h^{k+1})  & \text{a.e. in $\R^N$},\\ 
\end{array}
\right.
\end{equation}
and  set  $E_h^{k+1}:=\{x:u_h^{k+1}\le 0\}$. \footnote{{Choosing
$E_h^{k+1}=\overline{\{u_h^{k+1}<0\}}$
might provide a different (smaller) solution, which would
enjoy exactly the same properties as the one we (abritrarily) choose.}}
If either $E_h^{k}=\emptyset$ or $E_h^{k}=\R^N$, then set $E_h^{k+1}:=E_h^{k}$. We denote by $T^*_h$ the first discrete time $hk$ such that $E_h^k=\emptyset$, if such a time exists; otherwise we set  $T^*_h=+\infty$.
Analogously, we denote by ${T'_h}^*$ the first discrete time $hk$ such that $E_h^k=\R^N$, if such a time exists; otherwise we set  ${T'_h}^*=+\infty$.
\begin{remark}\label{rm:notation}
In the following, when  changing mobilities, forcing terms, and initial data,  we will sometimes write $(E^0)_{g,h}^{\psi,k}$ in place of $E_h^k$ in order to highlight the dependence of the scheme on $\psi$, $g$, and $E^0$. More generally, given any closed set $H$, $H_{g,h}^{\psi,k}$ will denote
\emph{the k-th minimizing movements starting from $H$ with mobility $\psi$,   forcing term $g$ and time-step $h$},  as described by the algorithm above.

\end{remark}

\begin{remark}[Monotonicity of the scheme]\label{rm:monotone}
From the comparison property stated in Proposition~\ref{prop:ATW} it easily follows that if   $E^0\subseteq F^0$  are closed sets, then (with the notation introduced in the previous remark) $(E^0)_{g,h}^{\psi, k}\subseteq (F^0)_{g,h}^{\psi, k}$ for all $k\in \N$.
In addition, note that $\overline{\bigl((E^0)_{g,h}^{\psi, k}\bigr)^c}=\bigl(\overline{(E^0)^c}\bigr)_{-g,h}^{\psi, k}$ for all $k$. Thus, if 
$\dist(E^0, F^0)>0$, then we may  apply Lemma~\ref{discong} below with $g_1=g_2=g$, $c=0$, and with $\eta$   the Euclidean norm, to deduce  that   $\dist\left ((E^0)_{g,h}^{\psi, k}, (F^0)_{-g,h}^{\psi, k}\right)>0$ for all $k\in \N$. 

The assumption that the sets are at positive distance is necessary,
otherwise one could only conclude, for instance, that the smallest solution of
the ATW scheme from $E^0$ is in the complement of any solution from
$F^0$, etc. 
%In fact also the following strict  monotonicity property holds:  if $ \dist (E^0, F^0)>0$, then for all $k\in \N$ we have $\dist\left ((E^0)_{g,h}^{\psi, k}, (F^0)_{{-g},h}^{\psi, k}\right)>0$ . This follows for instance by applying Lemma~\ref{discong} below with $g_1=g_2=g$, $c=0$, and letting $\eta$ be  the Euclidean norm.
\end{remark}
We now study the space regularity of the functions $u_h^k$ constructed above.
In the following computations,
given any function $f:\R^N\to \R^m$ and $\tau\in \R^N$, we denote $ f_\tau(\cdot):= f(\cdot+\tau)$. Then, the function $(u_h^{k+1})_\tau$ satisfies
\begin{multline}
 -h \, \Div (z_h^{k+1})_\tau +  (u_h^{k+1})_\tau = (d^{\pso}_{E_h^k})_\tau +  (G(\cdot,(k+1)h))_\tau-  (G(\cdot, kh))_\tau
 \le
 \\
d^{\pso}_{E_h^k} + G(\cdot,(k+1)h)-G(\cdot, kh)  + \pso(\tau)(1+Lh)\,,
\end{multline}
where in the last inequality we also used the Lipschitz-continuity of $g$.
By the comparison property stated in Proposition \ref{prop:ATW} we deduce that $(u_h^{k+1})_\tau - (1+Lh) \pso(\tau) \le u_h^{k+1}$. By the arbitrariness of $\tau\in\R^N$,  we get
$\psi(\nabla u_h^{k+1}) \le 1+Lh$, and in turn
\begin{equation}\begin{array}{ll}\label{eq:ineqd}
u_h^{k+1} \le (1+Lh)  d^{\pso}_{E_h^{k+1}} & \textup{ in } \bigl\{x\,:\,d^{\pso}_{E_h^{k+1}}(x)>0\bigr\}\,,\\[2mm]
u_h^{k+1} \ge (1+Lh)  d^{\pso}_{E_h^{k+1}} & \textup{ in } \bigl\{x\,:\,d^{\pso}_{E_h^{k+1}}(x)<0\bigr\}\,.
\end{array}
\end{equation}

We are now in  a position to define the discrete-in-time evolutions constructed via minimizing movements. Precisely, we set  
\begin{equation}\label{discretevol}
\begin{array}{l}
E_h(t):=E_h^{[t/h]},\vspace{2pt}\\
E_h:=\{(x,t): x\in E_h(t)\},\vspace{2pt}\\
 d_h(x,t):=d^{\pso}_{E_h(t)}(x),\vspace{2pt}\\
u_h(x,t):=u_h^{[t/h]}(x),\vspace{2pt} \\
z_h(x,t):=z_h^{[t/h]}(x),
\end{array}
\end{equation}
where $[\cdot]$ stands for the integer part. % of its argument.

We conclude this subsection with the {following remark.}
\begin{remark}[Discrete comparison principle]\label{rm:dcp} Remark~\ref{rm:monotone} now reads as follows:
If $E^0\subseteq F^0$ are closed sets and if we denote by $E_h$ and $F_h$ the  discrete evolutions with initial datum $E^0$ and $F^0$, respectively, then $E_h(t)\subseteq F_h(t)$ for all $t\geq 0$.
Analogously, if   $\dist(E^0,F^0)>0$, $E_h$ is defined with a forcing $g$ and $F_h$ with the forcing $-g$,
then $\dist(E_h(t), F_h(t))>0$ for all $t\geq 0$.
\end{remark}

\subsection{Evolution of $\p$-Wulff  shapes}\label{EWS}
We start paving the way for the convergence analysis of the scheme, by deriving  some estimates on the minimizing movements starting from a Wulff shape. 
We consider as initial set the $\p$-Wulff shape $W^\p(0, R)$, for $R>0$.
% We assume that $0<R\leq 1$.
First, thanks to Lemma~\ref{lm:explicit} (with $c_1=c_2=1$)
(\textit{cf} also~\cite[Appendix B,  Eq.~(39)]{CaCha}),
the solution of \eqref{eq:iterk2}
with $f=\dd^{\po}_{W^\p(0,R)}=\po-R$ is given  {(for $h$ small enough)} by $\po_h-R$, where 
\begin{equation}\label{eq:explicitpoh}
  \po_h(x): =\begin{cases}
    \sqrt{h}\frac{2N}{\sqrt{N+1}} & \textup{ if } \po(x)\le \sqrt{h(N+1)},\\
    \po(x)+h\frac{N-1}{\po(x)} & \textup{ else.}
  \end{cases}
\end{equation}

Observe then that there exist two positive constants $c_1\leq c_2$ such that 
\begin{equation}\label{c12}
  c_1\po\leq \pso\leq  c_2\po\,,
\end{equation}
and in particular
\begin{equation}\label{c12W}
  d^{\pso}_{W^\p(0,R)} \le  c_1(\po-R) \lor c_2(\po-R).
\end{equation}
Thus,  for any $k\in \N$ we have
\begin{equation}\label{conf10000}
  d^{\pso}_{W^\p(0,R)} + G(\cdot,(k+1)h)-G(\cdot, kh)\leq c_1(\po-R)\lor c_2 (\po-R)+\|g\|_\infty h=:f\,.
\end{equation}
Denoting by $u$ the solution to \eqref{eq:iterk3}, with $f$ defined above, then  Lemma~\ref{lm:explicit} yields
 $$
 u(x)=\|g\|_\infty h
 + \begin{cases}
\sqrt{c_1h}\frac{2N}{\sqrt{N+1}}-c_1R & \textup{ if } \po(x)\le \sqrt{\frac{h}{c_1}(N+1)},\\
f(x)+h\frac{N-1}{\po(x)} & \textup{ otherwise,}
\end{cases}
 $$ 
 provided that $h/c_1\leq C(N)R^2$ (here and in the following $C(N)$ denotes a positive constant that depends only on the dimension $N$ and  may change from line to line). 
Notice that $ \{u\leq 0\} = W^{\p}(0, \bar r)$ for 
 $$
 \bar r:=\frac{R-\frac{h}{c_1}\|g\|_\infty+\sqrt{(R-\frac{h}{c_1}\|g\|_\infty)^2-4 \frac{h}{c_1}(N-1)}}{2}.
 $$
Taking into account \eqref{conf10000}, we may apply the comparison principle stated in Proposition~\ref{prop:ATW} to infer  that if $k=[t/h]$ and $E^k_h=E_h(t)=W^\p(0,R)$, then 
 $$
 W^{\p}(0, \bar r)=\{u\leq 0\}\subseteq \{u_h^{k+1}\leq 0\}=E_h(t+h)\,,
 $$
 provided that $h/c_1\leq C(N) R^2$. Since
 $$
 \bar r\geq \sqrt{(R-\frac{h}{c_1}\|g\|_\infty)^2-4 \frac{h}{c_1}(N-1)}\stackrel{(R\leq 1)}{\geq}
 \sqrt{R^2-2\frac{h}{c_1}\big[2(N-1)+\|g\|_\infty\big]}
 $$
 and setting for $0\leq s-t\leq \frac{c_1R^2}{4\big[2(N-1)+\|g\|_\infty\big]}$
 \beq\label{rhs}
 r^R(s):=\sqrt{R^2-2\frac{s-t}{c_1}\big[2(N-1)+\|g\|_\infty\big]}\geq \frac{R}{\sqrt2}\,,
 \eeq
 by iteration  we  deduce that 
 \beq\label{errehstima}
  W^{\p}(0, r^R(s))\subseteq E_h(s)
 \eeq
 for all $0\leq s-t\leq \frac{c_1R^2}{4\big[2(N-1)+\|g\|_\infty\big]}$ and $h\leq c_1C(N)R^2$. 

{In particular,  there exists a constant $C$ depending only on $\|g\|_\infty$, $c_1$ 
 and the dimension $N$, and $h_0>0$, depending also on $R$,  such that for any $y\in \R^N$ and for all $h\leq h_0$ 
 \begin{equation}\label{neweq}
 W^\p(y, R-\tfrac{C}{R}h)\subseteq \bigl( W^\p(y, R) \bigr)_{{g,h}}^{{\psi,1}}\subseteq W^\p(y, R).  
 \end{equation}
}

%%% TO DO MOVE (3.20)
\subsection{Density estimates and barriers}\label{nuova}
In this {section} we collect some preliminary estimates on the incremental problem that will be crucial for the stability properties established in 
{Section}~\ref{subsec:stability}.
The following density lemma  and the subsequent corollary show that the solution to the incremental problem starting from a closed set $E$ cannot be too ``thin''  in $\R^N\setminus E$. The main point is that the estimate turns out to be independent of $h$ and $\psi$.
{We observe
%\begin{remark}\label{rm:deponphi}
that there exist  positive constants $a_1$, $a_2$ such that 
\beq\label{phia12}
a_1|\xi|\leq\p(\xi)\leq a_2 |\xi| \quad\text{for all $\xi\in \R^N$.}
\eeq
}
%A careful inspection of the proof of Lemma~\ref{density} shows that the constant $\sigma>0$ can be chosen as depending  depends only  on the ellipticity constants $a_1$, $a_2$, and the dimension $N$. 
%\end{remark}

\begin{lemma}\label{density}
Let $E\subset \R^N$ be a closed set,  $h>0$, and let $g_h\in L^\infty(\R^N)$ with $\|g_h\|_\infty \le G h$ for some $G>0$. Let $E'$ be a solution to
\begin{equation}\label{minmo}
\min_{ F\Delta E'\subset \subset B_R} P_\p(F;B_R) + \frac{1}{h} \int_{F\cap B_R} (d_E^\psi(x) + g_h(x)) \, dx %%=: F_h(F,B_R)
\end{equation}
for all positive $R$. Then, there exists $\sigma>0$, depending only on 
%the dimension $N$ and the anisotropy $\p$,  
{$N,a_1,a_2$}, 
and $r_0\in(0,1)$, depending {on $N,a_1,a_2,G$},
 with the following property:  if $\bar x$ is such that $|E'\cap B_s(\bar x)|>0$ for all $s>0$  and  
 $B_r(\bar x )\cap E=\emptyset$ with $r\le r_0$, then  
$$
|E'\cap B_r(\bar x)| \ge \sigma r^N.
$$
\end{lemma}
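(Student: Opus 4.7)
\medskip

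\noindent\textbf{Proof plan.} This is a classical density estimate for quasi-minimizers of the anisotropic perimeter, adapted to the incremental ATW functional. The strategy is to test the minimality of $E'$ against the competitor $E'\setminus B_r(\bar x)$ and to combine this with the (Euclidean) isoperimetric inequality, exploiting that $\p$ is comparable to the Euclidean norm via~\eqref{phia12}.

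Set $m(r):=|E'\cap B_r(\bar x)|$ (positive for all $r>0$ by hypothesis). Fix $R$ so large that $B_r(\bar x)\Subset B_R$ and use $E'\setminus B_r(\bar x)$ as a competitor in~\eqref{minmo}. For a.e. $r\in(0,r_0)$ one has
\[
P_\p(E';B_R)-P_\p(E'\setminus B_r(\bar x);B_R) \;=\; P_\p(E';B_r(\bar x)) - \int_{E'\cap\partial B_r(\bar x)}\p(\nu_{B_r})\,d\H^{N-1},
\]
so the minimality inequality, together with $d_E^\psi\ge 0$ on $B_r(\bar x)$ (since $B_r(\bar x)\cap E=\emptyset$) and $g_h\ge -Gh$, yields
\[
P_\p(E';B_r(\bar x)) \;\le\; \int_{E'\cap\partial B_r(\bar x)}\p(\nu_{B_r})\,d\H^{N-1} \;+\; G\, m(r).
\]

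Next I would convert this into an ODE for $m$. Using $\p\le a_2|\cdot|$ and $m'(r)=\H^{N-1}(E'\cap\partial B_r(\bar x))$ for a.e. $r$, the boundary term is bounded by $a_2 m'(r)$. Using $\p\ge a_1|\cdot|$ together with the decomposition $P(E'\cap B_r(\bar x))=P(E';B_r(\bar x))+\H^{N-1}(E'\cap\partial B_r(\bar x))$ (valid a.e.) and the Euclidean isoperimetric inequality $P(E'\cap B_r(\bar x))\ge c_N\,m(r)^{(N-1)/N}$, one gets
\[
a_1 c_N\, m(r)^{(N-1)/N} - a_1 m'(r) \;\le\; P_\p(E';B_r(\bar x)) \;\le\; a_2 m'(r) + G\, m(r).
\]
Since $m(r)\le\omega_N r^N$, the term $G\, m(r)$ is absorbed by a small fraction of $a_1 c_N\,m(r)^{(N-1)/N}$ provided $r\le r_0$ with $r_0$ chosen so that $G\omega_N^{1/N}r_0\le a_1 c_N/2$ (this fixes the dependence of $r_0$ on $N,a_1,a_2,G$, and $r_0\le 1$).

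This leaves the differential inequality
\[
\frac{a_1 c_N}{2}\, m(r)^{(N-1)/N} \;\le\; (a_1+a_2)\, m'(r) \qquad\text{for a.e. } r\in(0,r_0),
\]
i.e.\ $\bigl(m^{1/N}\bigr)'\ge \sigma^{1/N}$ with $\sigma^{1/N}:=a_1 c_N/\bigl(2N(a_1+a_2)\bigr)$. Since $m(s)>0$ for every $s>0$ by the hypothesis on $\bar x$, integrating from $0$ to $r$ gives $m(r)\ge \sigma r^N$, which is the claim. The only delicate points are justifying the slicing identity for $P_\p$ at a.e. $r$ (standard BV/coarea) and checking that $\sigma$ depends only on $N,a_1,a_2$ while $r_0$ also absorbs $G$; the rest is routine.
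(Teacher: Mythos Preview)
Your proof is correct and follows essentially the same classical density argument as the paper: compare $E'$ with $E'\setminus B_s(\bar x)$, exploit $d_E^\psi\ge 0$ on $B_r(\bar x)$ to make the estimate $h$- and $\psi$-independent, combine with the isoperimetric inequality, absorb the forcing term by choosing $r_0$ small, and integrate the resulting differential inequality for $m^{1/N}$. The only cosmetic difference is that the paper works directly with $P_\p(E'\cap B_s(\bar x))$ (using the identity $P_\p(E'(s);B_R)=P_\p(E';B_R)-P_\p(E'\cap B_s)+2\int_{E'\cap\partial B_s}\p(\nu)\,d\H^{N-1}$) rather than the relative perimeter $P_\p(E';B_r(\bar x))$, which yields the slightly different constant $\sigma^{1/N}=a_1C_N/(4a_2N)$ instead of your $a_1c_N/(2N(a_1+a_2))$.
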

\begin{proof}
 We adapt to our context a classical argument from the regularity theory of  the (quasi) minimizers of the perimeter. As mentioned  before, the main point is to use the fact that the ball  $B_r$ lies outside $E$ to deduce that 
 the constant {$\sigma,r_0$} are independent of $h$ and $\psi$. 
Fix $R>0$  such that $B_r(\bar x) \subset B_R$. For all $s\in(0,r)$, set $E'(s):= E'\setminus B_s(\bar x)$.
Note that  for a.e. $s$ we have {(as $\p$ is even)}
$$
P_\p(E'(s);B_R) = P_\p(E' ;B_R) - P_\p(E' \cap B_s(\bar x)) 
+ { 2\int_{E'\cap\partial B_s(\bar x)} \p(\nu)d\H^{N-1}.}
%P_\p( B_s(\bar x) ; E').
$$
Using also the fact that 
$$
\int_{E'(s)\cap B_R}d^{\psi}_E\, dx\le \int_{E'\cap B_R}d^{\psi}_E\, dx
$$
(since $d^{\psi}_E>0$ in $E^c$), by 
%the minimality inequality $F_h(E',B_R)\leq F_h(E'(s), B_R)$ and
{minimality of $E'$ in~\eqref{minmo} we find:
\[
P_\p(E'\cap B_s(\bar x))+\frac{1}{h}\int_{E'\cap B_R} g_h(x)dx
\le  2\int_{E'\cap\partial B_s(\bar x)} \p(\nu)d\H^{N-1} + \frac{1}{h}\int_{E'(s)\cap B_R}g_h(x)dx.
\]
Using now~\eqref{phia12} and the isoperimetric inequality (whose
constant is denoted $C_N$) we can estimate}%
% the Isoperimetric Inequality (denoting by $C_N$ the isoperimetric constant) we can estimate
\begin{align}
{2 a_2\H^{N-1}(E'\cap \partial B_s(\bar x))} &
{\ge a_1 P(E'\cap B_s(\bar x)) + \frac{1}{h} \int_{E' \cap B_s(\bar x)} g_h(x) \, dx}
\nonumber\\
& \ge {a_1}C_N |E'\cap B_s(\bar x)|^{\frac{N-1}{N}} - G |E'\cap B_s(\bar x)| 
\ge \frac{{a_1}C_N}{2} |E'\cap B_s(\bar x)|^{\frac{N-1}{N}}\,, \label{diffineq}
\end{align}
provided that $|E'\cap B_s(\bar x)|\leq \bigl(\frac{{a_1}C_N}{2G}\bigr)^N$, which is true if $r_0$ is small enough.
Recalling that    $|E'\cap B_s(\bar x)|>0$ for all $s$ and that 
%$P_\p( B_s(\bar x) ; E')\leq c_\p \mathcal{H}^{N-1}(\pa B_s(\bar x) \cap E')$ for some constant $c_\p>0$ depending only on $\p$,
 the inequality \eqref{diffineq} implies in turn that   
$$
\frac{d}{ds} |E'\cap B_s(\bar x)|^{\frac1N} \ge \frac{{a_1}C_N}{4{a_2}N } \text{ for a.e. } s\in(0,r).
$$
The thesis follows by integrating the above differential inequality.
\end{proof}
\begin{remark}
The same argument shows that a similar but $h$-dependent density estimate holds inside $E$.
\end{remark}

{
{We introduce the following notation:}  For any set $A\subset\R^N$, for any norm $\eta$, and for $\rho\in \R$ we denote
\begin{equation}\label{nota}
 (A)^{\eta}_\rho:=\{x\in \R^N\,:\, d_{A}^{\eta}(x)\leq \rho\}\,,
 \end{equation}
 and we will omit $\eta$ in the notation if $\eta$ is the Euclidean norm. 
We also recall the  notation $E_{g,h}^{\psi,k}$ introduced in Remark~\ref{rm:notation} to denote the $k$-th minimizing movement starting from $E$, with mobility $\psi$,   forcing term $g$ and time-step $h$.
}

\begin{corollary}\label{densityc}
Let $g$ and $\psi$ be an admissible forcing term and a mobility, respectively, and let $h>0$. Denote by   $E_{g,h}^{\psi,1}$ the corresponding (single) minimizing movement starting from $E$ (see {Section}~\ref{subsec:ATW} and Remark~\ref{rm:notation}). 
Let $\sigma$ and $r_0$ be the constants provided by Lemma~\ref{density} for $G:= \|g\|_\infty +1$. If $\bar x \in  E_{g,h}^{\psi,1}$
and $B_r(\bar x )\cap E=\emptyset$ with $r\le r_0$, then  
$$
|E_{g,h}^{\psi,1} \cap B_r(\bar x)| \ge \sigma r^N.
$$
\end{corollary}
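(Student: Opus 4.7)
The proof is a direct application of Lemma~\ref{density} to the one-step minimizer $E_{g,h}^{\psi,1}$. The plan is to check that this set fits the minimization problem~\eqref{minmo} with a forcing $g_h$ of the required size, and then invoke the density estimate.

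By construction $E_{g,h}^{\psi,1}=\{u_h^1\le 0\}$, where $(u_h^1,z_h^1)$ solves~\eqref{eq:iterk} with $k=0$; equivalently, the pair solves~\eqref{eq:iterk2} with datum $f=d_E^{\pso}+g_h$, where $g_h(x):=G(x,h)-G(x,0)=\int_0^h g(x,s)\,ds$. By Proposition~\ref{prop:ATW}, for every $R>0$ the sublevel set $\{u_h^1\le 0\}$ is a minimizer of $P_\p(F;B_R)+\frac{1}{h}\int_{F\cap B_R}(d_E^{\pso}+g_h)\,dx$ among competitors $F$ with $F\,\Delta\, E_{g,h}^{\psi,1}\Subset B_R$, which is precisely the problem~\eqref{minmo} from Lemma~\ref{density}. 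Moreover, hypothesis H1) yields $\|g_h\|_\infty\le \|g\|_\infty h\le Gh$ with $G=\|g\|_\infty+1$, so all the hypotheses of Lemma~\ref{density} are met by $E':=E_{g,h}^{\psi,1}$.

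Applying Lemma~\ref{density} at the point $\bar x\in E_{g,h}^{\psi,1}$—which satisfies $r\le r_0$ and $B_r(\bar x)\cap E=\emptyset$—one obtains $|E_{g,h}^{\psi,1}\cap B_r(\bar x)|\ge \sigma r^N$, as desired. The only mild point to verify is the auxiliary hypothesis $|E'\cap B_s(\bar x)|>0$ for all $s>0$: this is automatic at any point of positive Lebesgue density of $E_{g,h}^{\psi,1}$ (and hence at every relevant $\bar x$), while any exceptional measure-zero isolated points of the set $\{u_h^1\le 0\}$ can be ignored since the conclusion is stated in terms of Lebesgue measure. No substantive obstacle is expected, as the corollary is essentially a specialization of Lemma~\ref{density} to the ATW incremental problem.
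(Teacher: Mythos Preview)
Your argument has a genuine gap at the step where you verify the hypothesis of Lemma~\ref{density} that $|E'\cap B_s(\bar x)|>0$ for all $s>0$. The corollary asserts the density estimate at \emph{every} point $\bar x\in E_{g,h}^{\psi,1}=\{u_h^1\le 0\}$, not only at points of positive Lebesgue density. Your claim that ``exceptional measure-zero isolated points can be ignored since the conclusion is stated in terms of Lebesgue measure'' is precisely backwards: the conclusion $|E_{g,h}^{\psi,1}\cap B_r(\bar x)|\ge \sigma r^N$ is a pointwise statement about $\bar x$, and would fail outright at any point of zero density. Since the whole purpose of the corollary is to establish such a lower density bound, you cannot presuppose it.

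The paper resolves this by a small but essential perturbation. For $\eta\in(0,h)$ one sets $E'_\eta:=\{u_h^1\le\eta\}$; then $\bar x$ (where $u_h^1(\bar x)\le 0<\eta$) lies in the \emph{interior} of $E'_\eta$ because $u_h^1$ is continuous, so the positive-measure hypothesis is automatic. The set $E'_\eta$ solves~\eqref{minmo} with forcing $g_h:=\int_0^h g(\cdot,s)\,ds-\eta$, and $\|g_h\|_\infty\le(\|g\|_\infty+1)h=Gh$ precisely because $\eta<h$; this is exactly why the corollary takes $G=\|g\|_\infty+1$ rather than $\|g\|_\infty$ (a hint your argument did not exploit). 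Lemma~\ref{density} then gives $|E'_\eta\cap B_r(\bar x)|\ge\sigma r^N$, and monotone convergence as $\eta\searrow 0$ yields the claim for $E_{g,h}^{\psi,1}$.
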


\begin{proof}
Recall that $E_{g,h}^{\psi,1}= \{u(\cdot) \le 0\}$, where $u$ solves
\begin{equation*}
\left\{
\begin{array}{ll}
 -h \, \Div z  + u = d_E^\psi + \int_0^h g(\cdot,s) \, ds \qquad &\text{ in } \mathcal D'(\R^N),  \\
 \po(z)\le 1 \quad & \text{ a.e. in } \R^N,\\
z\cdot Du = \p(Du) \qquad &\text{ in  the sense of measures}.
\end{array}
\right.
\end{equation*}
Thus, by virtue of Proposition~\ref{prop:ATW},
 setting $E'_\eta:= \{u(\cdot) \le \eta\}$ for $\eta\in(0,h)$, we have that $E'_\eta$ solves \eqref{minmo} with $g_h:= \int_0^h g(\cdot,s) - \eta$. 
%and $E'$ replaced by $E'_\eta$
 Since $\bar x$ belongs to the interior of $E'_\eta$ and  $\|g_h\|_\infty\leq G h$, from Lemma \ref{density} we deduce that 
\begin{equation}\label{minmoden2}
|E_\eta'\cap B_r(\bar x)| \ge \sigma r^N.
\end{equation}
The thesis follows by monotone convergence by letting $\eta\searrow 0^+$.
\end{proof}
\noop{Throughout this {section}, given a set $E$ and $\e\in \R$, the symbol $(E)_\e$ stands for
$$
(E)_\e:= \{x\in \R^N:\, d_E(x)\leq \e\}\,,
$$ 
where $d_E$ is the Euclidean signed distance function from $E$.}

\begin{lemma}\label{lemmetto}
Let $F\subset \R^N$ be a convex set 
and let $r>0$. Then,
$$
| ((F)_\e \setminus F ) \cap B_r(0)|\le C(N) \e r^{N-1} \qquad \text{ for all } \e\ge 0,
$$ 
where $C(N)$ depends only on the dimension $N$.
\end{lemma}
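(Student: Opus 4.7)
\medskip

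\noindent\textbf{Proof plan.} The strategy is to reduce the volume bound to a uniform perimeter estimate for the level sets of $d_F$, exploiting convexity. Since $d_F$ is $1$-Lipschitz with $|\nabla d_F|=1$ a.e.\ on $\R^N\setminus F$, the coarea formula gives
\[
|((F)_\e\setminus F)\cap B_r(0)|=\int_0^\e \H^{N-1}(\{d_F=s\}\cap B_r(0))\,ds,
\]
so it suffices to prove the uniform bound $\H^{N-1}(\{d_F=s\}\cap B_r(0))\le C(N)\,r^{N-1}$ for every $s\ge 0$; integration then yields the lemma with the same constant.

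The decisive observation is that each sublevel set $(F)_s=F+\overline{B_s(0)}$ is a Minkowski sum of convex sets, hence convex. Consequently $K_s:=(F)_s\cap\overline{B_r(0)}$ is a convex subset of the Euclidean ball $\overline{B_r(0)}$, and the level set $\{d_F=s\}\cap B_r(0)=\pa(F)_s\cap B_r(0)$ lies inside the relative boundary of $K_s$ in $B_r(0)$.

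I would then invoke the classical monotonicity of perimeter under inclusion for convex bodies: $K_s\subseteq\overline{B_r(0)}$ forces $P(K_s)\le P(B_r(0))=N\omega_N\,r^{N-1}$. A quick justification is that the Euclidean nearest-point projection $\pi_{K_s}$ is $1$-Lipschitz and restricts to a surjection from $\pa B_r(0)$ onto $\pa K_s$: every boundary point $x\in\pa K_s$ admits an outward-normal half-line from $x$, which exits $\overline{B_r(0)}$ at a point whose projection back onto $K_s$ is exactly $x$. Combining this with $\pa(F)_s\cap B_r(0)\subseteq\pa K_s$ gives $\H^{N-1}(\{d_F=s\}\cap B_r(0))\le N\omega_N\,r^{N-1}$, and integration in $s\in(0,\e)$ completes the proof with $C(N)=N\omega_N$. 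No substantial obstacle is expected; the only mild technicality concerns degenerate $F$ (unbounded, empty interior, or reducing to a point), but the argument is robust to these cases since it uses only convexity of $(F)_s$ and the perimeter monotonicity.
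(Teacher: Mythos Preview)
Your proposal is correct and follows essentially the same route as the paper: both arguments apply the coarea formula to reduce to a uniform bound on $\H^{N-1}(\partial(F)_s\cap B_r)$, observe that $(F)_s$ is convex so that $(F)_s\cap B_r$ is a convex subset of $B_r$, and conclude via the monotonicity of perimeter under inclusion for convex bodies. The only difference is cosmetic---you spell out the Lipschitz-projection justification for perimeter monotonicity, which the paper takes for granted.
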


\begin{proof}
Notice that $(F)_s$ is convex for all positive $\e$, so that  $(F)_s\cap B_r$ is a convex set contained in $B_r$, and 
$$
\mathcal H^{N-1}(\partial ((F)_s \cap B_r)) \le \mathcal H^{N-1}(\partial B_r) = C(N) r^{N-1} \qquad \text{ for all } s>0.
$$
Therefore, thanks to the coarea formula,
%\begin{multline*}
\[
| ((F)_\e \setminus F ) \cap B_r(0)| = \int_0^\e  \H^{N-1}(\partial (F)_s \cap B_r) \, ds
%\\
\le \int_0^\e  \H^{N-1}(\partial ((F)_s \cap B_r)) \, ds \le C(N) \e r^{N-1}.
%\end{multline*}
\]
\end{proof}

The next lemma provides a crucial   estimate on the ``expansion''  of any closed set $E$ under a single minimizing movement, provided that $E$ satisfies a uniform exterior Wulff shape condition. The result is achieved by combining a barrier argument with the the density estimate established in Corollary~\ref{densityc}.
%{We  recall that the sets $E_{g,h}^{\psi,k}$, introduced in Remark~\ref{rm:notation}, denote the $k$-th minimizing movement starting from $E$, with mobility $\psi
%$,   forcing term $g$ and time-step $h$. Moreover, we recall that (see \eqref{nota}) for any set $A\subset\R^N$, for any norm $\eta$, and for $\rho\in \R$,  $(A)^{\eta}_\rho:=\{x%\in \R^N\,:\, d_{A}^{\eta}(x)\leq \rho\}$. }

\begin{lemma}\label{lm:crucial} For any $\beta$,  $G$, $\Delta>0$, there exists $h_0 > 0$, depending on the previous constants, on 
the anisotropy $\p$ 
%{(through the constants $a_1,a_2$ in~\eqref{phia12})} not clear
% at this point!
and the dimension $N$, and there exists $M_0>0$ depending on  the same quantities but $\Delta$, with the following property:
Let $\psi$ be a mobility satisfying
\begin{equation}\label{beta0}
\psi\leq \beta \p
\end{equation}
and  let $g$ be an admissible forcing term with $\|g\|_\infty\leq G$.  Then  for any closed set $E\subseteq \R^N$ such that $\R^N\setminus E = \bigcup_{W\in\mathcal G} W$, where $\mathcal G$ is a family of 
(closed) $\p$-Wulff shapes of radius $\Delta$, and for  all $h\le h_0$,  we have  $E_{g,h}^{\psi,1}\subset    (E)^{\po}_{\frac{M_0 h}{\Delta}}$.
\end{lemma}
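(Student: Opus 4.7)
My plan is to argue by contradiction, combining a Wulff-shape barrier derived from the evolution estimate in Subsection~\ref{EWS} with the density estimate of Corollary~\ref{densityc}.

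For the \emph{barrier}, I fix any $W=W^\p(y,\Delta)\in\mathcal G$ and use $E\subseteq W^c$ together with the discrete comparison principle (Remark~\ref{rm:dcp}) to get $E_{g,h}^{\psi,1}\subseteq (W^c)_{g,h}^{\psi,1}$. The duality identity from Remark~\ref{rm:monotone}, $\overline{((W^c)_{g,h}^{\psi,1})^c}=(W)_{-g,h}^{\psi,1}$, reduces the analysis to the scheme applied to a single $\p$-Wulff shape. Applying \eqref{neweq} with the forcing $-g$ (which still satisfies $\|{-g}\|_\infty\le G$), for $h\le h_0(\p,N,G,\Delta)$ I get
\[
(W)_{-g,h}^{\psi,1}\supseteq W^\p\bigl(y,\Delta-\tfrac{C}{\Delta}h\bigr),\qquad C=C(\p,N,G),
\]
hence $E_{g,h}^{\psi,1}\cap \mathring{W^\p(y,\Delta-Ch/\Delta)}=\emptyset$ for every $W=W^\p(y,\Delta)\in\mathcal G$.

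For the \emph{contradiction}, suppose there exists $\bar x\in E_{g,h}^{\psi,1}$ with $d^{\po}_E(\bar x)>M_0 h/\Delta$, where $M_0$ is to be fixed. By \eqref{phia12} one finds $\alpha=\alpha(a_1,a_2)>0$ with $B_{\alpha M_0 h/\Delta}(\bar x)\cap E=\emptyset$. Setting $r:=\min\{r_0,\alpha M_0 h/\Delta\}$ (with $r_0$ from Corollary~\ref{densityc} applied with $G+1$), for $h$ small enough $r=\alpha M_0 h/\Delta$, and Corollary~\ref{densityc} yields $|E_{g,h}^{\psi,1}\cap B_r(\bar x)|\ge \sigma r^N$. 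By the barrier, every such point lies in some $W\in\mathcal G$ and inside the $\pso$-shell $W\setminus\mathring{W^\p(y_W,\Delta-Ch/\Delta)}$ of width $Ch/\Delta$. Passing to a Besicovitch-type subcover of $\mathcal G$ of multiplicity bounded by $K(N,\p)$, and noting that each $W$ has Euclidean diameter $\sim\Delta$ while $r<\Delta$ for $h$ small, only a bounded (in $N,\p$) number of such Wulff shapes meet $B_r(\bar x)$. An anisotropic variant of Lemma~\ref{lemmetto} applied to the convex shape $W^\p(y_W,\Delta-Ch/\Delta)$ bounds each such shell intersected with $B_r(\bar x)$ by $C'(N,\p)(h/\Delta)r^{N-1}$, and summing gives $|E_{g,h}^{\psi,1}\cap B_r(\bar x)|\le C''(N,\p)(h/\Delta)r^{N-1}$. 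Together with the density lower bound this forces $r\le C''/\sigma\cdot h/\Delta$, so that $M_0\le C''/(\alpha\sigma)=:M_0^{\max}$, and any $M_0$ strictly larger gives the desired contradiction.

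The step I expect to be the hardest is the shell measure bound: reducing to a bounded-multiplicity Besicovitch subcover of $\mathcal G$, counting the Wulff shapes meeting $B_r(\bar x)$ in terms of $N$ and $\p$ only, and extending Lemma~\ref{lemmetto} (proved for the Euclidean distance) to the anisotropic $\pso$-shells of the $\p$-Wulff shapes, so as to control the total shell measure within $B_r(\bar x)$ uniformly by $(h/\Delta)r^{N-1}$.
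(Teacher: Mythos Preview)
Your overall architecture---Wulff-shape barrier via \eqref{neweq}, density lower bound from Corollary~\ref{densityc}, then a covering argument to bound the shell measure and reach a contradiction---is exactly the paper's strategy. The divergence, and the gap, is in the covering step.

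The Besicovitch reduction you propose does not work as stated. First, the standard Besicovitch theorem requires every point of the set to be covered to be the \emph{center} of a shape in the family; here the hypothesis only says each point of $\R^N\setminus E$ lies in \emph{some} $W\in\mathcal G$, not that it is a center. Second, and more seriously, bounded multiplicity does not by itself give a bound on the \emph{number} of shapes meeting $B_r(\bar x)$: you would still need to argue, via a density bound like~\eqref{qq1bis}, that each Wulff shape of radius $\Delta$ touching $B_r(\bar x)$ occupies a definite fraction of $B_{2r}(\bar x)$, and then combine this with bounded multiplicity to count. You flag this counting as the hardest step, but the mechanism you name (Besicovitch) is not the right one.

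The paper sidesteps both issues with a bespoke covering lemma (Lemma~\ref{lemmarico}): after rescaling so that $B_r(\bar x)$ becomes a unit Wulff shape and the covering family becomes translates of a large convex body satisfying the density bound~\eqref{qq1bis}, one constructs iteratively a finite subfamily of cardinality $\mathcal N=\mathcal N(\sigma,\theta,\p,N)$ which covers the unit shape up to measure $\sigma/4$. This small uncovered remainder is then absorbed alongside the $\mathcal N$ shell contributions (each bounded by Lemma~\ref{lemmetto}), and the choice $M_0=2C\mathcal N/\sigma$ forces the total below the density lower bound. The point is that one never needs a genuine subcover or bounded overlap---only a finite family covering up to a controlled error.

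One minor comment: your worry about an ``anisotropic variant'' of Lemma~\ref{lemmetto} is unnecessary. The shell $W^\p(y,\Delta)\setminus W^\p(y,\Delta-Ch/\Delta)$ is contained in a Euclidean $\epsilon$-neighborhood of the smaller Wulff shape with $\epsilon\sim h/\Delta$ (constants depending only on $a_1,a_2$ in~\eqref{phia12}), so the Euclidean Lemma~\ref{lemmetto} applies directly.
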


\begin{proof}
{First notice that \eqref{beta0} is equivalent to 
$
\frac{1}{\beta}\po\leq \pso\,.
$
 Hence recalling \eqref{neweq} and  Lemma~\ref{lemmetto}, there exists a constant $C$, depending only on $\|g\|_\infty$, $\beta$
{($=1/c_1$ in~\eqref{neweq})},   % $\p$,
 and  $N$, such that}
 $$
 W^\p(y, \Delta-\tfrac{C}{\Delta}h)\subseteq \bigl( W^\p(y, \Delta) \bigr)_{{-g},h}^{\psi,1}\subseteq W^\p(y, \Delta)  
 $$
for all $h\leq h_0$. By Lemma~\ref{lemmetto} it follows that 
for a possibly different constant $C$, depending only on
$\|g\|_\infty$, $\beta$, %$\p$,
and  $N$,  we have
\begin{equation}\label{prista}
\Bigl|\bigl(W^\p(y, \Delta)\setminus \bigl( W^\p(y, \Delta) \bigr)_{{-g},h}^{\psi,1} \bigr)\cap B_r(x)\Bigr |  \le   \frac{C}{\Delta} h r^{N-1} \quad \text{for all } x, y\in\R^N,\, r>0,\, h\le h_0\,.
\end{equation}

Observe now that there exists $\theta>0$, depending only on $W^{\p}(0,1)$, such that   
\begin{equation}\label{qq1bis}
\frac{|W^\p(y, R)\cap Q_r(x)|}{|Q_r(x)|}\ge \theta \text{ for all $y, x\in \R^N$,  $x\in W^\p(y, R)$,  $R\geq 1$,  and  $r\in (0,1)$, }
\end{equation}
where $Q_r(x)$ stands for the cube of side $r$ centered at $x$.  Let   $\sigma>0$ be the constant provided by Corollary~\ref{densityc}, 
 %set $\delta:=\frac{\sigma}{4}$, 
and  let $\mathcal{N}$ be the constant provided by 
 the covering Lemma~\ref{lemmarico} below, corresponding to the constant $\theta$  in \eqref{qq1bis}, 
%the choice of $\delta$ just made
{$\delta=\sigma/4$}  and $\mathcal C=W^\p(0,1)$. 
 Set $M_0 := (2C \mathcal{N})/\sigma$, where $C$ is the constant in \eqref{prista} and note that 
 for $r=  \frac{M_0 h}{\Delta}$ we have
\begin{equation}\label{prista2}
\mathcal{N} \frac{C}{\Delta} h r^{N-1} = \mathcal{N} \frac{C}{\Delta^N} h^N M_0^{N-1} =\frac{\sigma}{2} \left(\frac{M_0 h}{\Delta}\right)^N \qquad \text{ for all }h.
\end{equation}

Let  $x\in \R^N$  be such that $W^\p\left(x, \frac{M_0h}{\Delta}\right)\cap E=\emptyset$ for some $h\leq h_0$,  and assume by contradiction that   $x\in E_{g,h}^{\psi,1}$. Without loss of generality we may assume $x=0$. 

By taking $h_0$ smaller if needed, we can also assume that $\frac{\Delta^2}{M_0 h} \ge 1$ and 
$\frac{M_0h}{\Delta}\leq r_0$ for all $h\leq h_0$, where $r_0$ is the radius provided by Corollary~\ref{densityc}. 
Thus, recalling also \eqref{qq1bis}, for $h\leq h_0$,
{applying}  Lemma~\ref{lemmarico} below {(with $\delta=\sigma/4$)}
to the family
$$
\mathcal F=\frac{\Delta}{M_0h}\mathcal G= \left\{\frac{\Delta}{M_0h}W: W\in \mathcal G\right\},
$$
 {we} find a finite subfamily 
 $$
 \mathcal F'= \left\{\frac{\Delta}{M_0h}W^{\p}(x_1,\Delta) , \ldots,  \frac{\Delta}{M_0h}W^{\p}(x_{\mathcal{N}}, \Delta)\right\}
 $$
 of $\mathcal{N}$  elements   such that 
 $$
 \biggl|W^\p(0,1)\setminus \bigcup_{i=1}^{\mathcal{N}}\frac{\Delta}{M_0h}W^{\p}(x_i,\Delta)\biggr|\leq %\delta=
\frac{\sigma}4\,.
 $$
% where the last equality is due to the choice of $\delta$.  
By scaling back we obtain
\begin{equation}\label{avanzapoco2}
\Bigr|W^\p\Bigl(0, \frac{M_0h}{\Delta}\Bigr)\setminus \bigcup_{i=1}^{\mathcal{N}} W^{\p}(x_i,\Delta)\Bigl|\le    \frac{\sigma}{4} \left(\frac{M_0 h}{\Delta}\right)^N. 
\end{equation}      
Note now that by the comparison principle  (see Remark~\ref{rm:monotone})
$$
E_{g,h}^{\psi,1}\cap  W^\p\Bigl(0, \frac{M_0h}{\Delta}\Bigr)\subset  W^\p\Bigl(0, \frac{M_0h}{\Delta}\Bigr)\setminus
 \bigcup_{i=1}^{\mathcal{N}} \bigl(W^{\p}(x_i,\Delta)\bigr)_{{-g},h}^{\psi,1}\,. 
$$
Thus, using also  \eqref{prista}, \eqref{prista2} and \eqref{avanzapoco2} we deduce that
\begin{align*}
&\Bigl|E_{g,h}^{\psi,1}\cap  W^\p\Bigl(0, \frac{M_0h}{\Delta}\Bigr)\Bigr| \leq\Bigr | W^\p\Bigl(0, \frac{M_0h}{\Delta}\Bigr)\setminus
 \bigcup_{i=1}^{\mathcal{N}} \bigl(W^{\p}(x_i,\Delta)\bigr)_{{-g},h}^{\psi,1}\Bigr| \\
 &\le    
\Bigr | W^\p\Bigl(0, \frac{M_0h}{\Delta}\Bigr)\setminus
 \bigcup_{i=1}^{\mathcal{N}} W^{\p}(x_i,\Delta) \Bigr| +    \sum_{i=1}^{\mathcal{N}}\Bigl|\bigl(W^\p(x_i, \Delta)\setminus \bigl( W^\p(x_i, \Delta) \bigr)_{{-g},h}^{\psi,1} \bigr)\cap {W^\p\Bigl(0, \frac{M_0h}{\Delta}\Bigl)}   \Bigr | 
\\
&\le 
\frac{\sigma}{4} \left(\frac{M_0 h}{\Delta}\right)^N +  \mathcal{N} \frac{C}{\Delta} h \left(\frac{M_0 h}{\Delta}\right)^{N-1} =\frac{3}{4}\sigma \left(\frac{M_0 h}{\Delta}\right)^N,
\end{align*}
which contradicts the density estimate provided by Corollary~\ref{densityc}.
\end{proof}

We conclude %the subsection by stating and proving 
with the following covering lemma, which we used in the previous proof:
\begin{lemma}\label{lemmarico}
Let $\mathcal{F}$ be a family of closed convex sets in $\R^N$
which covers a closed convex set $\mathcal{C}$. Assume that there
exists $\theta >0$ such that for all $W\in\mathcal{F}$, $x\in \mathcal{C}$
and $r\le 1$,
\begin{equation}\label{eq:densmin}
\frac{|W\cap Q_r(x)|}{Q_r(x)} \ge\theta,
\end{equation}
where $Q_r(x) = x+[-r/2,r/2]^N$. Then for every $\delta>0$ there
exists $\mathcal{N}=\mathcal{N}(\delta,\theta,\mathcal{C},N)$ and sets $W_i\in \mathcal{F}$, $i=1,\dots,n\le \mathcal{N}$  such that  
\begin{equation}\label{avanzapoco}
\Bigl|B_1(0)\setminus \bigcup_{h=1}^{\mathcal{N}} W_{h}\Bigr|\le \delta. 
\end{equation}       
\end{lemma}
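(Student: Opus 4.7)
Read literally, the density condition with ``$x\in\mathcal{C}$'' unrestricted would force $\mathcal{C}\subseteq W$ for every $W\in\mathcal{F}$ (by Lebesgue differentiation, applied as $r\to 0$), trivializing the statement; I read the hypothesis in its natural form, namely for $x\in W\cap\mathcal{C}$, and interpret ``$B_1(0)$'' in the conclusion as $\mathcal{C}$. My approach is a greedy iteration: set $S_0:=\mathcal{C}$ and, while $|S_k|>\delta$, peel off a $W_{k+1}\in\mathcal{F}$ that removes a quantitatively large chunk of $S_k$, setting $S_{k+1}:=S_k\setminus W_{k+1}$.

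For the selection step, I would use a quantitative Vitali/Besicovitch argument (or, equivalently, Lebesgue differentiation) to find $x_k\in S_k$ and $r_k\in (0,1]$ with $|Q_{r_k}(x_k)\cap S_k|\ge (1-\theta/2)\,|Q_{r_k}(x_k)|$. Since $\mathcal{F}$ covers $\mathcal{C}\supseteq S_k$, there exists $W_{k+1}\in\mathcal{F}$ with $x_k\in W_{k+1}$, and the density hypothesis \eqref{eq:densmin} gives $|W_{k+1}\cap Q_{r_k}(x_k)|\ge \theta\, r_k^N$. Inclusion--exclusion then yields $|W_{k+1}\cap S_k\cap Q_{r_k}(x_k)|\ge (\theta/2)\, r_k^N$, providing the desired progress per step.

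The main obstacle is bounding $\mathcal{N}$ uniformly in the particular family $\mathcal{F}$: pointwise Lebesgue density cannot give a uniform lower bound on $r_k$, because for fixed $|S_k|$ one can construct $S_k$ on which the high-density scale is arbitrarily small (consider $S_k$ consisting of many tiny scattered blobs). To bypass this, I would not perform the selection pointwise but dyadically. At each dyadic level $j$ I partition $\mathcal{C}$ into sub-cubes of side $2^{-j}$; for each sub-cube $Q$ with $|Q\cap S_k|\ge (1-\theta/2)|Q|$ I pick one $W\in\mathcal{F}$ through a point of $Q\cap S_k$, which by density covers at least $(\theta/2)\,2^{-jN}$ of $Q\cap S_k$. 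Summing over ``full'' sub-cubes and observing that the complementary ``partial'' sub-cubes together carry at most a $(1-\theta/2)$-fraction of $|S_k|$, one round at level $j$ shrinks $|S_k|$ by a definite factor (bounded below in terms of $\theta$). After $J=O\!\bigl(\log(|\mathcal{C}|/\delta)/\theta\bigr)$ rounds (possibly at progressively finer scales to control boundary error) the residual uncovered measure drops below $\delta$; since level $j$ uses at most $C|\mathcal{C}|\,2^{jN}$ cubes, the total count is bounded uniformly by some $\mathcal{N}=\mathcal{N}(\delta,\theta,\mathcal{C},N)$, as required.
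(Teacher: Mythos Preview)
Your reading of the hypothesis (the density bound is meant for $x\in W\cap\mathcal{C}$, and $B_1(0)$ should be $\mathcal{C}$) is correct and matches the paper's intended use.

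Your dyadic selection strategy is close in spirit to the paper's, but there is a genuine gap. The claim that ``the partial sub-cubes together carry at most a $(1-\theta/2)$-fraction of $|S_k|$'' at a fixed scale $2^{-j}$ is false: nothing prevents $S_k$ from being spread so thinly that \emph{every} dyadic cube at level $j$ has $|Q\cap S_k|<(1-\theta/2)|Q|$. In fact, already after the first round at scale $\e$, for each cube $Q$ you have removed $W_Q$ with $|W_Q\cap Q|\ge\theta|Q|$, so $|S_1\cap Q|\le(1-\theta)|Q|<(1-\theta/2)|Q|$ and \emph{no} cube is full at that scale. You acknowledge this by allowing ``progressively finer scales'', but then the final count $C|\mathcal{C}|\sum_j 2^{jN}$ is useless unless the required levels $j_k$ are bounded uniformly in $\mathcal{F}$, and Lebesgue density alone cannot give this, since the fine structure of $S_k$ depends on the particular sets $W$ previously chosen.

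This is precisely the point where the paper exploits the convexity of the sets in $\mathcal{F}$ (via Lemma~\ref{lemmetto}), a hypothesis your argument never uses. The paper runs essentially your iteration, choosing at stage $k$ one $W_Q\in\mathcal{F}$ through the center of each dyadic cube $Q$ of size $\e_k$ lying in the not-yet-covered region. The covered region after stage $k-1$ is $\bigcup_{i<k}F_i$ with $F_i=\bigcup_l(W_{Q_{i,l}}\cap Q_{i,l})$; its boundary is made of pieces of convex boundaries and cube faces. By Lemma~\ref{lemmetto}, the $\e_k$-cubes straddling $\partial(W_{Q_{i,l}}\cap Q_{i,l})$ have total volume $\lesssim \e_i^{N-1}\e_k$, and summing over all earlier pieces gives a total ``bad'' volume $\lesssim |\mathcal{C}|\theta^{-1}\e_k/\e_{k-1}+\mathcal{H}^{N-1}(\partial\mathcal{C})\e_k$. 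This can be made $\le 2^{-k}$ by choosing $\e_k$ depending only on $\e_{k-1},\theta,\mathcal{C},N$, yielding a uniform bound on $\mathcal{N}_k$ and hence on $\mathcal{N}=\sum_k\mathcal{N}_k$. Without convexity (or some other uniform perimeter control on the $W$'s), your scale-refinement step cannot be made quantitative.
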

\begin{proof}

given $\e\in \{2^{-i}:i\in\N\}$, let $\mathcal{Q}_{\e}$
be the set of closed cubes
of size $\e$, centered at $\e\Z^N$, which are included in $\mathcal{C}$. Consider
$\e_1$ the largest dyadic value for which,
letting $I_1=\bigcup_{Q\in \mathcal{Q}_{\e_1}} Q$, one has $|\mathcal{C}\setminus I_1|\le 1/2$.

For each $Q=Q_{\e_1}(x)\subset I_1$, we choose $W_Q\in\mathcal{F}$ with $x\in W_Q$
and let $F_1= \bigcup_{Q\in \mathcal{Q}_{\e_1},Q\subset I_1} Q\cap W_Q\subset I_1$, 
$\widetilde F_1 = \bigcup_{Q\in \mathcal{Q}_{\e_1},Q\subset I_1} W_Q$.
By construction and~\eqref{eq:densmin}, $F_1\subseteq I_1$ and
$|F_1|\ge\theta |I_1|$.
Observe moreover that the number $\mathcal{N}_1$ of sets $Q\in\mathcal{Q}_{\e_1}$
with $Q\subset I_1$ depends only on the initial convex set $\mathcal{C}$.

We now assume we have built sets
$I_i,F_i,\widetilde{F_i}$, $i=1,\dots,k-1$, such that
\begin{itemize}
\item[i)] $I_i$ is the union of $\mathcal{N}_i$ dyadic cubes
$Q_{i,1},\dots,Q_{i,\mathcal{N}_i}\in\mathcal{Q}_{\e_i}$
where $\e_i,\mathcal{N}_i$ depend only on $\theta,\mathcal{C}$
and the dimension $N$;
\item[ii)] $F_i\subset I_i$, $F_i=\bigcup_{l=1}^{\mathcal{N}_i} W_{Q_{i,l}}\cap Q_{i,l}$ 
and $\widetilde F_i=\bigcup_{l=1}^{\mathcal{N}_l} W_{Q_{i,l}}$
where $W_{Q_{i,l}}\in\mathcal{F}$ contains the center of the cube $Q_{i,l}$;
\item[ii)] For $j=1,\dots, k-1$,
$I_j\subset \mathcal{C}\setminus(\bigcup_{i=1}^{j-1}\mathring{F}_i)$ (in particular
the sets $F_i$ have disjoint interior) and 
$|\mathcal{C}\setminus(\bigcup_{i=1}^{j-1}F_i)\setminus I_j|\le 2^{-j}$.
\end{itemize}
{We claim}  that we can build $I_k,F_k,\widetilde{F}_k$ which
satisfy the same conditions, with $I_k$ made of $\mathcal{N}_k$
cubes of size $\e_k$, the numbers $\mathcal{N}_k,\e_k$ depending
only on $\theta,\mathcal{C},N$.

In order to do this, we show that we can find
$\e_k<\e_{k-1}$ depending only on $\theta,\mathcal{C},N$
such that if $I_k$ is the union of all the cubes in
$\mathcal{Q}_{\e_k}$ not intersecting $\cup_{i=1}^{k-1}\mathring{F}_i$,
then
$|\mathcal{C}\setminus (\cup_{i=1}^{k-1} F_i)\setminus I_k|\le 2^{-k}$.
The set $\mathcal{C}\setminus (\cup_{i=1}^{k-1} \mathring{F}_i)\setminus I_k$ is made of
all the dyadic cubes of size $\e_k$ centered at $\e_k\Z^N$ which either
\begin{itemize}
\item intersect $\partial \mathcal{C}$: the total measure of such cubes
is bounded by $c\mathcal{H}^{N-1}(\partial \mathcal{C})\e_k$;
\item intersect, for some $i=1,\dots,k-1$,
$\partial (W_{Q_{i,l}}\cap \mathring{Q}_{i,l})$ for some $l\in\{1,\dots,\mathcal{N}_i\}$, where 
$Q_{i,l}$ is a dyadic cube of size $\e_i$ in $I_i$:
the total measure of such small cubes for a given $Q_{i,l}$ is bounded,
thanks to Lemma~\ref{lemmetto}, by $c(\e_i)^{N-1}\e_k=c|Q_{i,l}|(\e_k/\e_i)\le c(\e_k/\e_i)|W_{Q_{i,l}}\cap Q_{i,l}|/\theta$, hence 
the total measure of this region is bounded by
\[
\frac{c}{\theta} \sum_{i=1}^{k-1}\sum_{l=1}^{\mathcal{N}_k} \frac{\e_k}{\e_i}|W_{Q_{i,l}}\cap Q_{i,l}| =
\frac{c}{\theta} \sum_{i=1}^{k-1} \frac{\e_k}{\e_i}|F_i|
\le 
\frac{c|\mathcal{C}|}{\theta}\frac{\e_k}{\e_{k-1}}.
\]
\end{itemize}
We see that volume of the union of all these ``bad'' cubes is less than
\[
\frac{c|\mathcal{C}|}{\theta}\frac{\e_k}{\e_{k-1}}+ c\mathcal{H}^{N-1}(\partial \mathcal{C})\e_k .
\]
Again, one can find $\e_k$ depending only on $\mathcal{C},N,\theta$
such that this quantity is less than $2^{-k}$. It follows that the
total number of cubes in $I_k$, $\mathcal{N}_k$, depends also only
on $\mathcal{C},N,\theta$. We denote $Q_{k,l}$, $l=1,\dots,\mathcal{N}_k$,
the corresponding cubes.

As before we
build then $F_k\subset I_k$ as the union of $Q_{k,l}\cap W_{Q_{k,l}}$
where $W_{Q_{k,l}}\in\mathcal{F}$ is a convex set containing the
center of $Q_{k,l}$, and $\widetilde F_k=\bigcup_{l=1}^{\mathcal{N}_k}W_{Q_{k,l}}$.
{We have then proved that i), ii) and iii) holds true with $k-1$ replaced by $k$.

Notice that,  thanks to~\eqref{eq:densmin},  $|I_k|\le |F_k|/\theta$.
Recall  that the sets $I_k$ and $\mathring{F}_1,\dots,\mathring{F}_{k-1}$
are disjoint. Hence
$|(\mathcal{C}\setminus (\cup_{i=1}^{k-1} F_i))  \setminus I_k| = 
|\mathcal{C}\setminus (\cup_{i=1}^{k-1} F_i)| - |I_k|$} so that
\[
|\mathcal{C}\setminus (\cup_{i=1}^{k-1} F_i)| \le \frac{|F_k|}{\theta}+2^{-k}.
\]
Since these sets are decreasing, and the $\mathring{F}_k\subset \mathcal{C}$ disjoint,
for $K\ge 1$,
\[
K |\mathcal{C}\setminus (\cup_{i=1}^{K} F_i)|
\le \sum_{k=2}^{K+1} |\mathcal{C}\setminus (\cup_{i=1}^{k-1} F_i)| \le \frac{|\mathcal{C}|}{\theta}+1
\]
hence
\[
|\mathcal{C}\setminus (\cup_{i=1}^{K} \widetilde{F}_i)|
\le |\mathcal{C}\setminus (\cup_{i=1}^{K} F_i)| \le \frac{\frac{|\mathcal{C}|}{\theta}+1}{K}.
\]
{If $K\ge (|\mathcal{C}|/\theta+1)/\delta$, \eqref{avanzapoco} holds true and the maximum number of sets
$W\in\mathcal{F}$ used in the construction, which is bounded
by $\sum_{k=1}^K\mathcal{N}_k$, depends only on $N,\delta,\theta,\mathcal{C}$}.
\end{proof}

{
\begin{remark}\label{rm:mzerophi}
A careful study of the previous proof shows that $\mathcal{N}$ only
depends on the convex set $\mathcal{C}$ through $|\mathcal{C}|$ and $\H^{N-1}(\partial\mathcal{C})$.
Hence, as these quantities, for $\mathcal{C}=W^\p(0,1)$,
as well as $\theta$ in~\eqref{qq1bis}
depend continuously on $\p$, one can deduce that
the constants  $M_0$, $h_0$ in Lemma~\ref{lm:crucial} can be chosen
as depending on $\p$ only through the ellipticity constants $a_1$, $a_2$
of \eqref{phia12}.
\end{remark}
}

\section{The crystalline mean curvature flow with a $\p$-regular mobility}\label{sec:phiregular}

Throughout this section we assume  the mobility $\psi$  satisfies the following   regularity assumption with respect to the metric induced by $\po$:
 \begin{definition}\label{def:phiregular}
We will say that a norm $\psi$ is $\p$-regular if  the associated Wulff shape $W^\psi(0,1)$ satisfies a uniform interior $\p$-Wulff shape condition, that is, if  there exists $\e_0>0$ with the following property:  for every $x\in \partial W^\psi(0,1)$ there exists $y\in W^\psi(0,1)$ such that  $W^\p(y, \e_0)\subseteq W^\psi(0,1)$ and $x\in \partial W^\p(y,\e_0)$.
 \end{definition}
{Notice that it is equivalent to saying that $W^\psi(0,1)$ is the
sum of a convex set and  $W^\p(0,\e_0)$, or equivalently that
$\psi(\nu)=\psi_0(\nu)+\e_0\p(\nu)$ for some convex function $\psi_0$.}
We will show that, under the above additional regularity assumption, the ATW scheme converges to a (generically) unique solution of the flow in the sense of Definition~\ref{Defsol}, see Subsections~\ref{subsec:convergence}~ and~\ref{subsec:levelset} below.
We start with some preliminary estimates.

\subsection{Evolution of  $\psi$-Wulff shapes and preliminary estimates}\label{EWSreg}
 
  We now analyze the minimizing movement of a $\psi$-Wulff shape $W^\psi(0, R)$, with $\psi$  $\p$-regular, that is, we assume $E_h(t)= W^\psi(0, R)$  for some time $t\geq 0$.

  By the regularity assumption in Definition~\ref{def:phiregular},
  %there exists $\e>0$ small enough such that, 
  setting $\overline R:=(\e_0 R)\land 1$, we have (with the notation introduced in \eqref{nota}) {that for every $0<r<\overline R$
  $$
  \left(\left(W^{\psi}(0, R)\right)^{\po}_{-\overline R}\right)^{\po}_{r}= \left(W^{\psi}(0, R)\right)^{\po}_{-[\overline R-r]}\,.
  $$ 
Since  for every $x\in \left(W^{\psi}(0, R)\right)^{{\po}}_{-\overline R}$ we have  $W^{\p}(x, \overline R)\subseteq W^{\psi}(0, R)$,
 from } the discrete comparison principle and the analysis performed in Subsection~\ref{EWS} it follows that 
 $$
\left(W^{\psi}(0, R)\right)^{\po}_{-[\overline R-r^{\overline R}(s)]}\subseteq E_h(s)
 $$
 for all $0\leq s-t\leq \frac{c_1{\overline R}^2}{4\big[2(N-1)+\|g\|_\infty\big]}$ and $h\leq c_1C(N){\overline R}^2$, where $r^{\overline R}$ is the function defined in~\eqref{rhs} (with $R$ replaced by $\overline R$).

Now we return to  an arbitrary discrete motion $E_h(\cdot)$.
If for some $(x,t)\in\R^N\times [0,T_h^*)$ we have $d_h(x,t)>R$ (see \eqref{discretevol}), then $W^\psi(x,R)\cap E_h(t)=\emptyset$. Thus, again by the discrete comparison principle and the results of Subsection~\ref{EWS}, we infer that 
$$
\left(W^{\psi}(0, R)\right)^{\po}_{-[\overline R-r^{\overline R}(s)]} \cap E_h(s)=\emptyset
 $$
 for all $0\leq s-t\leq \frac{c_1{\overline R}^2}{4\big[2(N-1)+\|g\|_\infty\big]}$ and $h\leq c_1C(N){\overline R}^2$.
Taking into account also $\eqref{c12}$ and the definition of $r^{\overline R}$, it follows that 
\begin{align*}
d_h(x, s)&\geq d_h(x, t)-c_2\left(\overline R-\sqrt{{\overline R}^2-2\frac{s-t}{c_1}\big[2(N-1)+\|g\|_\infty\big]}\right)\\
&= d_h(x, t)-\frac{c_2}{c_1}\frac{\big[4(N-1)+2\|g\|_\infty\big](s-t)}{\overline R+\sqrt{{\overline R}^2-2\frac{s-t}{c_1}\big[2(N-1)+\|g\|_\infty\big]}}\\
&\geq  d_h(x, t)-\frac{c_2}{c_1}\frac{\big[4(N-1)+2\|g\|_\infty\big](s-t)}{\overline R}\\
&=d_h(x, t)-\frac{c_2}{c_1}\frac{\big[4(N-1)+2\|g\|_\infty\big]}{(\e_0 R)\land 1}(s-t)\,
\end{align*}
 for all $0\leq s-t\leq \frac{c_1[(\e_0^2R^2)\land 1]}{4\big[2(N-1)+\|g\|_\infty\big]}$ and $h\leq c_1C(N)[(\e_0^2R^2)\land 1]$.

Letting $R\nearrow d_h(x,t)$ we obtain
\beq\label{straponzina21}
d_h(x,s)\geq d_h(x, t)-\frac{c_2}{c_1}\frac{\big[4(N-1)+2\|g\|_\infty\big]}{(\e_0 d_h(x, t))\land 1}(s-t)\,
\eeq
 for all $0\leq s-t\leq \frac{c_1\big[\big(\e_0^2d_h^2(x, t)\big)\land 1\big]}{4\big[2(N-1)+\|g\|_\infty\big]}$ and $h\leq c_1C(N)[(\e_0^2d_h^2(x, t))\land 1]$, whenever $d_h(x, t)>0$.
 
By an entirely similar argument if  $d_h(x, t)<0$, then we obtain
\beq\label{straponzina21biz}
d_h(x,s)\leq d_h(x, t)+\frac{c_2}{c_1}\frac{\big[4(N-1)+2\|g\|_\infty\big]}{(\e_0 |d_h(x, t)|)\land 1}(s-t)\,
\eeq
 for all $0\leq s-t\leq \frac{c_1\big[\big(\e_0^2d_h^2(x, t)\big)\land 1\big]}{4\big[2(N-1)+\|g\|_\infty\big]}$ and $h\leq c_1C(N)[(\e_0^2d_h^2(x, t))\land 1]$.

\subsection{Convergence of the ATW scheme}\label{subsec:convergence} For every $h>0$ let  $E_h$ be the discrete evolution defined in \eqref{discretevol}. Clearly, we may extract a  subsequence $\{E_{h_l}\}_{l\in \N}$ such that

\[
E_{h_l}\stackrel{\mathcal K}{\longrightarrow} E\qquad\text{and}\qquad 
{(\mathring{E}_{h_l})}^c\stackrel{\mathcal K}{\longrightarrow} A^c
\]
for a suitable closed set $E$ and a suitable open set $A\subset  E$. 
Define $E(t)$ and $A(t)$ as in \eqref{discretevol}. 

Observe that if $E(t)=\emptyset$ for some $t\ge 0$,
then \eqref{straponzina21} implies that $E(s)=\emptyset$ for all $s\ge t$
so that we can define, as in Definition~\ref{Defsol}, the extinction
time $T^*$ of $E$, and similarly the extinction  time ${T'}^*$ of
 $A^c$. Notice that at least one between  $T^*$ and ${T'}^*$ is $+\infty$. %% We denote by $T_{min}^*$ the smallest one.  never used?
Possibly extracting a further subsequence, we have the following result, which can be proven arguing exactly as in \cite[Proof of Proposition 4.4]{CMP4},
using now \eqref{straponzina21} and \eqref{straponzina21biz}.

\begin{proposition}\label{prop:E}
There exists a countable set $\mathcal N\subset (0, +\infty)$ such that
${d_{h_l}}(\cdot, t)^+\to \dist(\cdot, E(t))$   and  $d_{h_l}(\cdot, t)^-\to \dist(\cdot,A^c)$ locally uniformly  for all $t\in (0, +\infty) \setminus \mathcal{N}$.
Moreover, $ E$ and $ A^c$ satisfy the continuity properties (b) and (c) of Definition~\ref{Defsol}. 
Finally, $E(0)= E^0$ and  $A(0)=\mathring{E^0}$. 
\end{proposition}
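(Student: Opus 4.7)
The plan is to adapt the argument of \cite[Proof of Proposition 4.4]{CMP4}, the main ingredients being: uniform spatial Lipschitz regularity of $d_{h_l}(\cdot,t)^\pm$ (with constant $1$ in the $\pso$ metric), together with the one-sided temporal estimates \eqref{straponzina21}--\eqref{straponzina21biz}, which give quantitative control on how fast $d_{h_l}(x,\cdot)$ can vary as long as $d_{h_l}(x,\cdot)$ stays bounded away from zero. The $\p$-regularity of $\psi$ enters precisely to ensure these estimates are uniform in $h_l$.

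First, I would fix a countable dense set $\{x_n\}\subset\R^N$ and consider, for each $n$, the one-dimensional functions $t\mapsto d_{h_l}(x_n,t)^+$ and $t\mapsto d_{h_l}(x_n,t)^-$. The estimate \eqref{straponzina21} shows that whenever $d_{h_l}(x_n,t)\ge\delta>0$, the map $t\mapsto d_{h_l}(x_n,t)$ has a uniform (in $l$) modulus of continuity controlled by a function of $\delta$ alone; a symmetric statement holds from \eqref{straponzina21biz} when $d_{h_l}\le -\delta$. Combining this with the trivial uniform bound on these functions on compact sets, a Helly-type selection (together with a diagonal extraction over $n$) produces a further subsequence, still denoted $h_l$, and a countable set $\mathcal N\subset(0,+\infty)$ (the union of the possible jump points of the limits), such that $d_{h_l}(x_n,t)^{\pm}$ converges for every $n$ and every $t\notin\mathcal N$. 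The uniform spatial $1$-Lipschitz property then promotes this to local uniform convergence in $x$ for each $t\notin\mathcal N$, and the limits are $1$-Lipschitz (in the $\pso$ metric) and hence are distance functions from some closed set.

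To identify these limits with $\dist(\cdot,E(t))$ and $\dist(\cdot,A^c(t))$, I would use the spacetime Kuratowski convergences $E_{h_l}\stackrel{\mathcal K}{\longrightarrow}E$ and $(\mathring{E}_{h_l})^c\stackrel{\mathcal K}{\longrightarrow}A^c$ together with the one-sided time estimates: the latter control prevent the ``spacetime sections'' at a fixed $t$ from differing from $E(t),A^c(t)$, except possibly at the countable set of time-jumps, which is absorbed into $\mathcal N$. Property (c) of Definition~\ref{Defsol} is an immediate consequence of \eqref{straponzina21}: if $E(t_0)=\emptyset$ then $d_{h_l}(x,t_0)\to+\infty$ on compacts, and \eqref{straponzina21} propagates this to all $s>t_0$. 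Left continuity (b) for $E(\cdot)$ and $A^c(\cdot)$ follows by passing \eqref{straponzina21}--\eqref{straponzina21biz} to the limit along $t\notin\mathcal N$: they respectively force $\dist(\cdot,E(s))\to\dist(\cdot,E(t))$ and $\dist(\cdot,A^c(s))\to\dist(\cdot,A^c(t))$ as $s\nearrow t$ locally uniformly on the sets where the limits are positive, and the Kuratowski definition of $E(t),A^c(t)$ then provides left continuity of the sets themselves.

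The initial condition $E(0)=E^0$, $A(0)=\mathring{E}^0$ is obtained by combining $E_{h_l}(0)=E^0$ for every $l$ with the uniform estimates \eqref{straponzina21}--\eqref{straponzina21biz} at small times, which prevent any instantaneous jump in the distance functions at $t=0$. The main technical obstacle I anticipate is the bookkeeping in the Helly step: the modulus of continuity in \eqref{straponzina21} degenerates as $d_{h_l}(x,t)\to 0^+$, so the selection must be organized by truncating at level sets $\{|d_{h_l}|\ge 1/k\}$ and letting $k\to\infty$, each step adding countably many exceptional times; this is exactly what forces $\mathcal N$ to be countable rather than finite, and where the $\p$-regularity constant $\e_0$ enters the estimates in an essential way.
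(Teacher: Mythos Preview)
Your proposal is correct and follows essentially the same approach as the paper, which simply refers to \cite[Proof of Proposition~4.4]{CMP4} and indicates that the only change is to replace the estimates there by \eqref{straponzina21}--\eqref{straponzina21biz}. Your outline of the Helly-type selection on a countable dense set, the use of the one-sided time estimates to control the behavior away from the zero level, and the identification of the limits with the slice distance functions is exactly the structure of that argument.
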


\begin{theorem}\label{th:ATW}
The set $E$ is a superflow in the sense of Definition~\ref{Defsol}
with initial datum $E^0$,
while $A$ is a subflow with initial datum ${E}^0$.
\end{theorem}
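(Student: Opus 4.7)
Conditions (a), (b), (c) of Definition~\ref{Defsol} are immediate from Proposition~\ref{prop:E}, which identifies $E(0)=E^0$ and $A(0)=\mathring{E^0}$, establishes left-continuity in the Kuratowski sense, and, via the barrier estimate~\eqref{straponzina21}, ensures that once $E(t)$ becomes empty it remains so thereafter. Hence the real content of the theorem is the verification of the differential inequality~(d), including the required regularity of the Cahn--Hoffmann field $z$. I detail the argument for the superflow $E$; the statement for the subflow $A$ follows by passing to complements and replacing $g$ by $-g$.

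The starting point is the Euler--Lagrange system~\eqref{eq:iterk} combined with the Lipschitz estimate~\eqref{eq:ineqd}. In the region $\{d^{\pso}_{E_h^{k+1}}>0\}$, substituting $u_h^{k+1}\le (1+Lh)d^{\pso}_{E_h^{k+1}}$ into~\eqref{eq:iterk} and dividing by $h$ yields
\[
\frac{d^{\pso}_{E_h^{k+1}} - d^{\pso}_{E_h^{k}}}{h} + L\,d^{\pso}_{E_h^{k+1}} \;\ge\; \Div z_h^{k+1} + \frac{G(\cdot,(k+1)h)-G(\cdot,kh)}{h}.
\]
In the time-parametrization~\eqref{discretevol} this is the discrete counterpart of~\eqref{eq:supersol} with constant $M=L$. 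The plan is to pass to the limit in this inequality along the subsequence $h_l\to 0$ from Proposition~\ref{prop:E}.

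For the passage to the limit I would use: (i) local uniform convergence $d_{h_l}\to d$ on $\{d>0\}$ (Proposition~\ref{prop:E}); (ii) the equi-Lipschitz bound $\psi(\nabla u_{h_l})\le 1+Lh_l$ and $\po(z_{h_l})\le 1$, which by Banach--Alaoglu extract $z_{h_l}\wto z$ weakly-$*$ in $L^\infty$ with $\po(z)\le 1$ a.e.; (iii) convergence of the discrete time-difference of $G$ to $g$ in the weak-$L^\infty$ sense, via Lebesgue differentiation. Testing the displayed discrete inequality against smooth nonnegative $\vp$ supported in $\{d>0\}$ and sending $h_l\to 0$ produces~\eqref{eq:supersol} in the distributional sense.

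The main obstacle is identifying the limiting Cahn--Hoffmann field: we must check that $z$ can be chosen with $z\in\pa\p(\nabla d)$ a.e. From $z_{h_l}\in\pa\p(\nabla u_{h_l})$ and~\eqref{subp} we have $z_{h_l}\cdot\nabla u_{h_l}=\p(\nabla u_{h_l})$ a.e. Since $u_{h_l}$ is equi-Lipschitz and converges locally uniformly to $d$ in $\{d>0\}$, $\nabla u_{h_l}\wto \nabla d$ weakly in $L^p_{loc}$ for every $p\in(1,\infty)$. A Minty-type argument, combined with lower semicontinuity of $w\mapsto\int\p(\nabla w)$ against smooth test functions, upgrades this to $z\cdot\nabla d=\p(\nabla d)$ a.e.\ in $\{d>0\}$, which by~\eqref{subp} is equivalent to the required membership $z\in\pa\p(\nabla d)$. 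Finally, the $L^\infty$ bound on $(\Div z)^+$ in $\{d\ge\delta\}$ follows from the barrier estimate~\eqref{straponzina21}: on $\{d_h\ge\delta\}$ the forward-in-time decrement of $d_h$ is uniformly controlled by a constant depending only on $\delta,\p,\psi,\|g\|_\infty$, which through the discrete equation forces $\Div z_h$ to be uniformly bounded above on that set, and the bound survives in the weak-$*$ limit.
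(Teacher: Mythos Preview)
Your overall architecture matches the paper's, but there are two genuine gaps in the passage to the limit.

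\textbf{The bound on $(\Div z)^+$.} You claim this follows from~\eqref{straponzina21} because ``the forward-in-time decrement of $d_h$ is uniformly controlled, which through the discrete equation forces $\Div z_h$ to be uniformly bounded above.'' This reasoning is backwards. From the discrete equation, $\Div z_h^{k+1}=\frac{u_h^{k+1}-d^{\pso}_{E_h^k}}{h}-\text{(forcing)}$, and after using $u_h^{k+1}\le(1+Lh)d^{\pso}_{E_h^{k+1}}$ you need an \emph{upper} bound on $d^{\pso}_{E_h^{k+1}}-d^{\pso}_{E_h^k}$, i.e.\ a bound on how fast the set \emph{shrinks}. But~\eqref{straponzina21} only says the set cannot \emph{expand} too fast (it gives a lower bound on $d_h(x,s)-d_h(x,t)$ for $s>t$). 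The paper obtains the pointwise estimate $\Div z_h^{k+1}\le 2c_2L+\tfrac{N-1}{(\e_0 R)\wedge 1}$ on $\{d^{\pso}_{E_h^k}\ge R\}$ by a direct one-step comparison: at each $x$ with $d^{\pso}_{E_h^k}(x)=R$, the $\p$-regularity of $\psi$ produces an interior $\p$-Wulff shape of controlled radius tangent at $x$, and Lemma~\ref{lm:explicit} gives an explicit solution of~\eqref{eq:iterk2} above which $u_h^{k+1}$ must lie. This is an essential use of the $\p$-regularity hypothesis and is not recoverable from the time-continuity barrier.

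\textbf{The Minty-type identification.} You assert $u_{h_l}\to d$ locally uniformly and then invoke ``lower semicontinuity of $w\mapsto\int\p(\nabla w)$'' plus ``a Minty-type argument.'' Two issues. First, Proposition~\ref{prop:E} gives $d_{h_l}\to d$, not $u_{h_l}\to d$; the latter needs $\|u_{h_l}-d_{h_l}(\cdot,\,\cdot-h_l)\|_{L^\infty(\{d_{h_l}\ge\delta\})}=O(\sqrt{h_l})$, which the paper derives from the upper bound above \emph{and} a (cruder, $h$-dependent) lower bound on $\Div z_h$ obtained by a separate comparison. Second, the heart of the identification is showing $\int\!\!\int\eta\,z_{h_l}\!\cdot\!\nabla(u_{h_l}-d)\to 0$. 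Since both $z_{h_l}$ and $\nabla u_{h_l}$ converge only weakly, this product does not pass to the limit by abstract arguments. The paper integrates by parts after subtracting $m_l(t):=\min_{\spt\eta(\cdot,t)}(u_{h_l}-d)$ so that $u_{h_l}-d-m_l\ge 0$, and then controls $\int\!\!\int\eta(u_{h_l}-d-m_l)\Div z_{h_l}$ precisely using the one-sided bound on $\Div z_{h_l}$ just established. Without that bound the argument does not close.
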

\begin{proof}
Points \textit{(a), (b)} and \textit{(c)} of Definition~\ref{Defsol} follow from Proposition~\ref{prop:E}.
It remains to show \textit{(d)}.
{We will use the notation in \eqref{discretevol}}.  Possibly extracting a further subsequence and setting $z_{h_l}(\cdot, t):=0$ for $t>T^*_{h_l}$ if $T^*_{h_l}<T^*$, we may assume
that $z_{h_l}$ converges  weakly-$*$ in $L^\infty(\R^N\times (0,T^*))$  to some vector-field $z$
satisfying {$\po(z)\le 1$} almost everywhere.
Recall that by~\eqref{eq:ineqd} we have $u_{h_l}^{k+1} \le (1+Lh_l) d^{\pso}_{E_{h_l}^{k+1}}$,
whenever $\dd_{E_{h_l}^{k+1}}\ge 0$. In turn,  it follows from~\eqref{eq:iterk} that 
\begin{equation}\label{eq:ineqdisc}
 \Div z_{h_l}^{k+1} + \frac{1}{h_l}  \int_{kh_l}^{(k+1)h_l} g(\cdot,s) \, ds  \le \frac{(1+Lh_l) d^{\pso}_{E_{h_l}^{k+1}} - d^{\pso}_{E_{h_l}^k}}{{h_l}}.
\end{equation}
Consider
a nonnegative test function
$\eta\in C_c^\infty((\R^N\times (0,T^*))\setminus E)$.
If $l$ is large enough, then the distance of the support of $\eta$
from $E_{h_l}$ is bounded away from zero. In particular,  $d_{h_l}$ is finite ({as a consequence on \eqref{straponzina21}})
and positive on $\spt\eta$. We deduce from~\eqref{eq:ineqdisc} that
\begin{multline*}
0\le \int \!\!\! \int \eta(x,t)\Bigg[\frac{d_{h_l}(x,t+{h_l})-d_{h_l}(x,t)}{h_l}-\Div z_{h_l}(x,t+{h_l})
\\
-\frac{1}{h_l}  \int_{[\frac{t}{h_l}]h_l}^{([\frac{t}{h_l}] +1)h_l} g(x,s) \, ds + L d_{h_l}(x,t+{h_l}) \Bigg]
dt dx 
\\ 
=-\int  \!\!\!  \int \Bigg[\frac{\eta(x,t)-\eta(x,t-h_l)}{h_l}d_{h_l}(x,t)- z_{h_l}(x,t+h_l)\cdot\nabla \eta(x,t)
\\
- \eta(x,t)  \Big( \frac{1}{h_l} \int_{[\frac{t}{h_l}]h_l}^{([\frac{t}{h_l}] +1)h_l} g(x,s) \, ds + L d_{h_l}(x,t+{h_l})  \Big) \Bigg]
\,dt dx.% \ge 0.
\end{multline*}
Passing to the limit $l\to\infty$ we obtain \eqref{eq:supersol} {with $M=L$.}

Next, we establish an upper bound for $\Div z_{h}$ away from $E^k_{h}$. To this aim
let $x\in \R^N\setminus E^k_{h}$ be such that $d^{\pso}_{E^k_{h}}(x)=: R>0$.

{There exists $\xi \in E^k_h$ with $x\in\partial W^\psi(\xi,R)$, and recalling 
Definition~\ref{def:phiregular} there is $\bar x$ such that, setting $\bar R:=(\e_0 R)\land 1$, 
 $W^\p(\bar x,\bar R)\subset W^\psi(\xi,R)$ and $x\in\partial W^\p(\bar x,\bar R)$. In particular, $d^{\pso}_{E^k_h}-R\le \pso(\cdot-\xi)-R
{ \le d^{\pso}_{W^\psi(\xi,R)} }
\le d^{\pso}_{W^\p(\bar x,\bar R)}$.} Using~\eqref{c12W}, one has for all $y\in\R^N$
\[
d^{\pso}_{E^k_h}(y) 
\le R+c_2(\po(y-\bar x)-\bar R) \lor c_1(\po(y-\bar x)-\bar R).
\]
Thanks to~\eqref{c12},
\[
\int_{kh}^{ (k+1)h}\!\!g(y,s)\, ds\le
\int_{kh}^{ (k+1)h}\!\!g(x,s)\, ds+Lh\pso(y-x)
\le
\int_{kh}^{ (k+1)h}\!\!g(x,s)\, ds+Lh c_2\po(y-x),
\]
hence, since $\po(y-x)\le \po(y-\bar x)+\bar R$, summing the
two previous inequalities we obtain:
\begin{multline*}
d^{\pso}_{E^k_h}(y) +\int_{kh}^{(k+1)h}\!\!g(y,s)\, ds
\\
\le R+{2}c_2Lh\bar R+c_2(1+Lh)(\po(y-\bar x)-\bar R) \lor (c_1+c_2Lh)(\po(y-\bar x)-\bar R)+\int_{kh}^{ (k+1)h}\!\!g(x,s)\, ds.
\end{multline*}
As a consequence (\textit{cf} Lemma~\ref{lm:explicit}), 
\begin{multline*}
u^{k+1}_h(y)\le R + {2}c_2Lh\bar R+\int_{kh}^{ (k+1)h}\!\!g(x,s)\, ds
\\+ c_2(1+Lh)(\po(y-\bar x)-\bar R) \lor (c_1+c_2Lh)(\po(y-\bar x)-\bar R)
 + \frac{h(N-1)}{\po(y-\bar x)}
\end{multline*}
if $\po(y-\bar x)\ge \sqrt{h(N+1)/(c_1+c_2Lh)}$ and as long as this quantity
is less than {$\frac{\bar R}{\sqrt{N+1}}$.} Evaluating this inequality at $y=x$, we deduce that
if $h\le C(N)\bar R^2$,
\begin{align*}
u^{k+1}_h(x) & \le R+ {2}c_2Lh\bar R
+\int_{kh}^{ (k+1)h}\!\!g(x,s)\, ds + \frac{h(N-1)}{\bar R} \\
& \le d^{\pso}_{E^k_{h}}(x) + {2}c_2Lh
+\int_{kh}^{ (k+1)h}\!\!g(x,s)\, ds + \frac{h(N-1)}{(\e_0R)\land 1} ,
\end{align*}
as $\bar R\le 1$. %%the optimum would be $\min\{\e_0 R,\sqrt{(N-1)/(c_2L)}\}$
Thanks to~\eqref{eq:iterk}, it follows
\begin{equation}\label{numero} 
\Div z^{k+1}_h(x)\le  {2}c_2L + \frac{N-1}{(\e_0 d^{\pso}_{E^k_{h}}(x))\land 1}.
\end{equation}
\noop{\color{green} OR:
Recalling Definition 
\ref{def:phiregular} and setting $\overline R(x):=\big(\e_0 d^{\pso}_{E^k_{h}}(x)\big)\land 1$,  there exists $\bar x\in \R^N$  such that   
$W^\p\big(\bar x, \overline R(x)\big)\subseteq  \big\{ d^{\pso}_{E^k_{h}} \le d^{\pso}_{E^k_{h}}(x) \big\}$ and $x\in \partial W^\p\big(\bar x,\overline R(x)\big)$.
Recall that  $\pso \le c_2 \po$ 
(see \eqref{c12}). We deduce that
\begin{align}
d^{\pso}_{E^k_{h}}(y)+\int_{kh}^{ (k+1)h}g(y,s)\, ds &\le  \pso(y-\bar x) + d^{\pso}_{E^k_{h}}(x) -c_2\overline R(x) \nonumber\\
&\quad+\int_{kh}^{ (k+1)h}g(x,s)\, ds+Lh\pso(y-x)\nonumber\\
&\le c_2 \po(y-\bar x) + d^{\pso}_{E^k_{h}}(x) -c_2\overline R(x)\nonumber\\
&\quad+\int_{kh}^{ (k+1)h}\!\!\!g(x,s)\, ds+Lc_2h (\po(y-\bar x)+ \overline R(x))\label{confronto1000}
\end{align}
for all  $y\in\R^N$.
Set
\begin{multline*}
\widetilde f(y):= c_2(1+Lh) \po(y-\bar x)  + d^{\pso}_{E^k_{h}}(x) -c_2(1-Lh)\overline R(x)+\int_{kh}^{ (k+1)h}g(x,s)\, ds,
\end{multline*}
and let $(\widetilde u, \widetilde z)$ be solutions to \eqref{eq:iterk} with right-hand side replaced by $\widetilde f$. 
Then, by \eqref{eq:explicitpoh} it is easy to see that 
\begin{align*}
\widetilde u(y)& =  c_2(1+Lh) \po_{\frac{h}{c_2(1+Lh)}}(y-\bar x)  + d^{\pso}_{E^k_{h}}(x) -c_2(1-Lh)\overline R(x)+\int_{kh}^{ (k+1)h}g(x,s)\, ds\\
&= c_2 (1+Lh)\po (y-\bar x) + \frac{h(N-1)}{ \po (y-\bar x)} + d^{\pso}_{E^k_{h}}(x) -c_2(1-Lh)\overline R(x)\\
&\quad+\int_{kh}^{ (k+1)h}g(x,s)\, ds\,.
\end{align*}
By the comparison principle stated at the end of Proposition~\ref{prop:ATW} and recalling \eqref{confronto1000}, we have 
$$
u_{h}^{k+1} \le \widetilde u\,.
$$
Testing the above inequality at $y=x$ and recalling that 
$\po (x-\bar x)=\overline R(x)$,
 we get 
\begin{align*}
u_{h}^{k+1} (x)& \le d^{\pso}_{E^k_{h}}(x) + 2c_2Lh\overline R(x) +\int_{kh}^{ (k+1)h}g(x,s)\, ds
+\frac{h(N-1)}{\overline R(x)}\\
&\leq d^{\pso}_{E^k_{h}}(x) + 2c_2Lh+\int_{kh}^{ (k+1)h}g(x,s)\, ds
+\frac{h(N-1)}{ (\e_0R)\land 1}\,,
\end{align*}
where in the last inequality we used the fact that  $(\e_0R)\land 1\leq \big(\e_0d^{\pso}_{E^k_{h}}(x)\big)\land 1=\overline R(x)\leq 1$. 
In turn, using \eqref{eq:iterk}, we get that for a.e. $x\in \{d^{\pso}_{E^k_h}\geq R\}$
\begin{align}\label{numero} 
\Div z^{k+1}_{h}(x)& = \frac{u_{h}^{k+1}(x) - d^{\pso}_{E^k_{h}}(x)}{h} -\frac{1}h \int_{kh}^{ (k+1)h}g(x,s)\, ds
\nonumber \\
 &\le 2c_2L +\frac{(N-1)}{(\e_0 R)\land 1}\,.
\end{align}
}
In the limit $h_l\to 0$, we deduce that $\Div z$
is a Radon measure in $\R^N\times (0,T^*)\setminus E$, and 
$(\Div z)^+\in L^\infty(\{(x,t)\in\R^N\times (0,T^*)\,:\, d(x,t)\geq\delta\})$ for every $\delta>0$.

We now provide  a lower ($h$-dependent) bound for $\Div z_{h}$.  To this aim, note that if $d^{\pso}_{E^k_{h}}(x)=:R>0$, then 
$d^{\pso}_{E^k_{h}}\geq R-\pso(\cdot-x)\geq R- c_2\po(\cdot-x)$. Thus, by comparison with the explicit solution given by \eqref{eq:explicitpoh}
(with a change of sign), {and using that $(c_2 \po)_h =c_2 \po_{h/c_2}$, we get}
$$
u_h^{k+1}(x)\geq R-c_2 \po_{\frac{h}{c_2}}(0) - h \|g\|_\infty \ge R- \frac{3N \sqrt{ c_2 h}}{\sqrt{N+1}}, 
$$
for $h=h_l$ small enough. In turn, by \eqref{eq:iterk}, we deduce
$$
\Div z_{h}^{k+1}\geq -\frac{1}{\sqrt h} \frac{3N \sqrt{c_2}}{\sqrt{N+1}} \qquad\text{a.e. in }\{x\,:\, \dd_{E^k_h}(x)>0\}.
$$
Combining the above inequality with \eqref{numero} and  using \eqref{eq:iterk} again,  we deduce that 
for any $\delta>0$
$$
\|u_{h_l}(\cdot, t)-d_{h_l}(\cdot, t-h_l)\|_{L^{\infty}(\{x\,:\,d_{h_l}(x,t-h_l)\geq\delta\})}\leq \sqrt{h_l} \frac{3N \sqrt{c_2}}{\sqrt{N+1}} + o(\sqrt{h_l}),
$$
provided that $l$ is large enough.  In particular, recalling  the convergence properties
of $E_{h_l}$ and $d_{h_l}$ (see also \cite[Equation (4.9)]{CMP4}), we deduce that 
{for all $t\in (0, T^*)\setminus\mathcal{N}$ (where recall that $\mathcal{N}$ is introduced in Proposition \ref{prop:E}),}
\begin{equation}\label{elleuno}
u_{h_l}\to d\qquad\text{a.e. in }\R^N\times (0,T^*)\setminus E,
\end{equation}
with the sequence $\{u_{h_l}\}$ locally (in space and time) uniformly bounded. 

Now, with \eqref{numero} and \eqref{elleuno} at hand, 
we proceed as in the final part of the proof of~\cite[Theorem 4.5]{CMP4}
to show that 
  $\p(\nabla d)= z\cdot \nabla d$, which will imply that $z\in\partial\p(\nabla d)$ (as clearly, $\po(z)\le 1$ a.e.). For this, it is enough to show
that $\p(\nabla d)\le z\cdot \nabla d$.
On one hand, since $z_{h_l}\in\partial\p(\nabla u_{h_l})$, one has for
a nonnegative test function $\eta\in C_c^\infty(E^c;\R_+)$, 
using $u_{h_l}\to d$, that
\[
\int\int \eta\, \p(\nabla d)dxdt\le \liminf_l \int\int \eta\,\p(\nabla u_{h_l})dxdt=
\liminf_l \int\int \eta\, z_{h_l}\cdot\nabla u_{h_l}dxdt.
\]
On the other hand,
\[
\int\int \eta\, z_{h_l}\cdot\nabla u_{h_l}dxdt=
\int\int \eta\, z_{h_l}\cdot \nabla d \,dxdt+
\int\int \eta\, z_{h_l}\cdot \nabla (u_{h_l}-d)dxdt
\]
and as $z_{h_l}\stackrel{*}{\rightharpoonup} z$, we obtain that $\p(\nabla d)\le z\cdot\nabla d$~a.e.,
provided we can show that
\[
\lim_l\int\int \eta\, z_{h_l}\cdot \nabla (u_{h_l}-d)dxdt = 0.
\]
The proof is as in~\cite{CMP4}: we introduce $m_l(t):=\min_{x\in\spt\eta(\cdot,t)}(u_{h_l}(x,t)-d(x,t))$ which is bounded
and goes to zero for {almost} all $t$, and write
\begin{multline*}
\int\int \eta\, (z_{h_l}\cdot \nabla (u_{h_l}-d))dxdt = 
\int\int \eta\, (z_{h_l}\cdot \nabla (u_{h_l}-d-m_h))dxdt 
\\= 
-\int\int z_{h_l}\cdot\nabla\eta\, (u_{h_l}-d-m_h)dxdt
-\int\int \eta\,(u_{h_l}-d-m_h)\Div z_{h_l}dxdt.
\end{multline*}
The first integral in the right-hand side clearly goes to zero,
while, using~\eqref{numero} and $u_{h_l}-d-m_h\ge 0$, the second is bounded
from above by a quantity which vanishes as $l\to \infty$. We deduce
that $\liminf_l\int\int \eta\, z_{h_l}\cdot \nabla (u_{h_l}-d)dxdt \ge 0$,
and the proof of the reverse inequality is identical.

It follows that $z\in \partial\p(\nabla d)$ a.e.~in
$\R^N\times (0,T^*)\setminus E$.
This concludes the proof that $E$ is a superflow.
The proof that $A$ is a subflow is identical.
\end{proof}

\begin{remark}[Stability of sub- and superflows]\label{rm:stability}
From the proof of Theorem~\ref{th:ATW}, we note that  the minimizing movement scheme provides a superflow  such 
that the corresponding field $z$ satisfies (see \eqref{numero})
\beq\label{eq:stability}
\Div z\leq {2}c_2L +\frac{(N-1)}{(\e_0 R)\land 1}\qquad\text{ a.e. in }\{d\geq R\}\,,
\eeq
where $c_2$ is the constant appearing in \eqref{c12} , $L$ is the Lipschitz constant of $g$, and $\e_0$ is given in 
Definition~\ref{def:phiregular}.
Moreover, from \eqref{straponzina21} we deduce that
\beq\label{straponzina21ter}
d(x,s)\geq d(x, t)-\frac{c_2}{c_1}\frac{\big[4(N-1)+2\|g\|_\infty\big]}{(\e_0 d(x, t))\land 1}(s-t)\,
\eeq
 for all $0\leq s-t\leq \frac{c_1\big[\big(\e_0^2d^2(x, t)\big)\land 1\big]}{4\big[2(N-1)+\|g\|_\infty\big]}$, whenever $d(x, t)>0$.   
 An analogous statement clearly holds also for the subflow provided by the ATW scheme.  
 
We remark that {thanks to these estimates,}
the following  stability property holds:
Let $\p_n\to \p$ and $\psi_n\to \psi$ and assume
that $\psi_n$ is $\p_n$-regular  uniformly in $n$: there exists $\e_0>0$ such that the $\psi_n$-Wulff shape satisfies a uniform inner $\p_n$-Wulff shape condition with radius $\e_0$ for all $n$ (see Definition~\ref{def:phiregular})

For every $n$ let $E^n$ be a superflow as in Definition~\ref{Defsol}, with $\p$ and $\psi$ replaced by $\p_n$ and $\psi_n$, respectively, and with initial datum $E^n_0$. Denote by   $d_n$ the corresponding distance function, that is, $d_n(\cdot ,t):=\dist^{\pso_n}(\cdot, E^n(t))$  in $\R^N\setminus E^n(t)$, and by $z_n$ the corresponding 
Cahn-Hoffmann field given by Definition~\ref{Defsol}, and assume that \eqref{eq:stability} and \eqref{straponzina21ter}  hold with $z$ and $d$ replaced by $z_n$ and $d_n$ (and again with  $c_1$, $c_2$, $L$, and $\e_0$ independent of $n$). Finally, assume that 
$E^n \stackrel{\mathcal K}{\longrightarrow} E$ and $E^n_0 \stackrel{\mathcal K}{\longrightarrow} E_0$. Then,  $E$ is a superflow
with respect to the anisotropy $\p$ and the mobility $\psi$ satisfying   $E(0)=E_0$. 
This follows by the same arguments employed in the proofs 
of Proposition~\ref{prop:E}
(see also~\cite[Proof of Proposition 4.4]{CMP4}) and Theorem~\ref{th:ATW}.
Analogous stability properties hold also for subflows.   
\end{remark}

\subsection{Existence and uniqueness of the level set flow}\label{subsec:levelset}
The convergence theorem proved in the previous subsection combined with the comparison principles established in 
Subsection~\ref{sec:comp} yields  existence and uniqueness of the level set formulation of the crystalline curvature flow{, when $\psi$ is $\p$-regular}.
In the following we briefly set up the discrete version of such  formulation and we give the precise statements.  

Let $u^0:\R^N\to \R$ be a uniformly continuous function. Let $E^{0,\lambda}:= \{u^0 \le \lambda\}$ and let $E_{\lambda,h}$ be the corresponding time discrete evolutions, defined according with \eqref{discretevol} with $E^0$ replaced by $E^{0,\lambda}$. {(We recall that if in general there is no uniqueness of the
ATW scheme, our definition in Section~\ref{subsec:ATW} provides a well-defined selection.)}

We introduce the level set discrete evolution $u_h:\R^N\times \R \to \R$ defined by
\begin{equation}\label{defls}
u_h(x,t):= \inf\{\lambda\in\R\,: \, x\in E_{\lambda,h}(t)  \}.
\end{equation}
{
(We warn the reader that the discrete level set function $u_h$ defined above does not coincide with the discrete total variation flow  function (already denoted by $u_h$) defined in \eqref{discretevol}.)}
Note that by construction
\beq\label{eq:bc}
\{u_h(\cdot, t)<\lambda\}\subseteq E_{\lambda,h}(t)\subseteq \{u_h(\cdot, t)\leq\lambda\}\,.
\eeq

Let $\omega$ denote an increasing modulus of continuity for  $u^0$, with respect to the metric induced by $\pso$. Thus, in particular,
if $\lambda_1<\lambda_2$ we have
$$
\dist^{\pso}(E^{0,\lambda_1}, \R^N\setminus E^{0, \lambda_2})\geq \omega^{-1}(\lambda_2-\lambda_1)\,.
$$
Let $L>0$ be the spatial Lipschitz { constant of $g$}  with respect to $\pso$ and choose $\bar h>0$ so small that 
$(1-Lh)^{-\frac1{Lh}}\leq 2 \mathrm{e}$ for all $h\in (0, \bar h)$. 
By Lemma~\ref{discong} below {(with $\eta=\psi$,   $\beta=1$, $M=L$,  $g_1=g_2=g$  and  $c=0$)} we deduce that 
$$
\dist^{\pso}(E_{\lambda_1, h}(t), \R^N\setminus E_{ \lambda_2,h}(t))\geq {\omega^{-1}(\lambda_2-\lambda_1)}(1-Lh)^{[\frac{t}{h}]}\geq \omega^{-1}(\lambda_2-\lambda_1)\mathrm{(2e)}^{-Lt}
$$
for all $t> 0$ and $h\in (0, \bar h)$. 
In turn, it easily follows that 
{$\widehat \omega :\lambda\mapsto\omega\bigl(\mathrm{(2e)}^{Lt}\lambda \bigr)$ }
is a spatial modulus of continuity for 
$u_h(\cdot, t)$. 
As for the continuity in time, we have:
\begin{lemma}\label{lemUCT}
For any $\e>0$, there exists $\tau>0$ and $h_0>0$
 (depending on $\e$) such that for all  $|t-t'|\le \tau$ and $h\le h_0$ we have
$|u_h(\cdot,t)-u_h(\cdot,t')|< \e$.
\end{lemma}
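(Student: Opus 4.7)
The plan is to combine the spatial modulus of continuity $\widehat\omega$ of $u_h(\cdot,t)$ (just recalled above Lemma~\ref{lemUCT}) with the one-sided temporal estimates \eqref{straponzina21} and \eqref{straponzina21biz} on the $\pso$-signed distance from a discrete evolution, and to pass between $u_h$ and the family $\{E_{\lambda,h}\}_{\lambda}$ via the sandwich inclusion \eqref{eq:bc}.

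Fix $x\in\R^N$, $t\geq 0$, and set $\lambda_0:=u_h(x,t)$. To bound $u_h(x,s)-\lambda_0$ from above for $s$ slightly larger than $t$, I first apply \eqref{eq:bc} with $\lambda_0<\lambda_0+\e/2$ to deduce $x\in E_{\lambda_0+\e/2,h}(t)$, and then use the $\pso$-separation estimate derived just before Lemma~\ref{lemUCT} (applied to the pair of levels $\lambda_0+\e/2<\lambda_0+\e$) to infer
$$R_t:=\omega^{-1}(\e/2)(2\mathrm{e})^{-Lt}\leq\dist^{\pso}\bigl(x,\R^N\setminus E_{\lambda_0+\e,h}(t)\bigr).$$
Hence the signed distance $d(\cdot,\cdot):=d^{\pso}_{E_{\lambda_0+\e,h}(\cdot)}$ satisfies $d(x,t)\leq -R_t$; the estimate \eqref{straponzina21biz}, which is available because $\psi$ is $\p$-regular, then yields $d(x,s)<0$ provided $h$ is small compared to $R_t$ and $s-t$ is small compared to $R_t^2$. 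A new application of \eqref{eq:bc} gives $u_h(x,s)\leq\lambda_0+\e$. The symmetric argument, using \eqref{straponzina21} on the pair $(\lambda_0-\e,\lambda_0-\e/2)$ at a point $x$ which lies outside $E_{\lambda_0-\e/2,h}(t)$ by \eqref{eq:bc}, produces $u_h(x,s)\geq\lambda_0-\e$.

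Taking $\tau$ and $h_0$ to be the minima of the smallness conditions produced above, and repeating the argument for $s<t$ in the same fashion, delivers the claim uniformly in $x$, since $R_t$ depends only on $\e$ and $t$ and not on the particular value $\lambda_0=u_h(x,t)$. The main obstacle is that $R_t$ deteriorates like $(2\mathrm{e})^{-Lt}$ as $t$ grows, so the admissible $\tau$ obtained in this way unavoidably shrinks with $t$; consequently, the estimate is naturally uniform in $(x,t)$ only on bounded time intervals $[0,T]$, with $\tau$ and $h_0$ depending additionally on $T$. A truly global-in-time uniform continuity would require exploiting in addition the global $L^\infty$ bound $\inf u^0\leq u_h\leq\sup u^0$ and a stability analysis at large times, but is not needed in the subsequent developments.
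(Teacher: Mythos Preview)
Your argument is correct and is essentially the route the paper has in mind: the paper only sketches the proof as ``standard comparison arguments with the evolution of the $\p$-Wulff shape, whose extinction time can be estimated,'' referring to \cite[Lemma~6.13]{CMP3}; the estimates \eqref{straponzina21}--\eqref{straponzina21biz} you invoke are precisely the quantitative form of that Wulff-shape comparison, so the two approaches coincide. Your handling of the case $s<t$ is fine: once the forward-in-time inequality is established for all base times in $[0,T]$, the backward inequality follows by swapping the roles of $t$ and $s$.

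One point worth flagging. Remark~\ref{dipende} asserts that $\tau$ and $h_0$ depend only on $\omega$, $\e$, $\|g\|_\infty$, and $c_1$, which would make the statement genuinely global in time. Your proof, as you correctly observe, produces $\tau$ and $h_0$ through the radius $R_t=\omega^{-1}(\e/2)(2\mathrm{e})^{-Lt}$, hence with an additional dependence on a time horizon $T$. This discrepancy is harmless here: the only use of Lemma~\ref{lemUCT} is in Step~1 of Theorem~\ref{th:phiregularlevelset} to obtain \emph{local} uniform equicontinuity of $\{u_h\}$ in $\R^N\times[0,+\infty)$, for which your version is entirely sufficient. (When $L=0$, e.g.\ $g$ constant, there is no decay and your argument already gives the global statement.)
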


The proof of the lemma follows by standard comparison arguments with the evolution of the $\p$-Wulff shape, whose extinction time can be estimated (see Remark~\ref{dipende} below). We refer to \cite[Lemma 6.13]{CMP3} for the details.

\begin{remark}\label{dipende}
Let us remark that the extinction time of a $\p$-Wulff shape of radius $R$, evolving according to the forced mean crystalline curvature flow with forcing term $g$ and mobility $\psi$, is bounded away from zero by a constant which depends only on $R$, the infinity norm of $g$ and the constant $c_1$ in \eqref{c12} (see section~\ref{EWS}). In turn, $h_0$ and $\tau$ depend only on  $\omega$, $\e$,   $\|g\|_\infty$ and  $c_1$.    
\end{remark}

We are ready to proof the main result of this section. 
\begin{theorem}\label{th:phiregularlevelset} 
Let $\psi$, $g$, and $u^0$ be a $\p$-regular mobility,  an admissible forcing term, and a uniformly continuous function on $\R^N$, respectively. Then the following holds: 

{\rm (i) (Existence and uniqueness)} There exists a unique solution $u$ to the level set flow   with initial datum  $u^0$,  in the sense of Definition~\ref{deflevelset1}.

 {\rm (ii) (Approximation via minimizing movements)} The solution  $u$ is the locally uniform limit in $\R^N\times [0,+\infty)$, as $h\to 0^+$,  of the    level set minimizing movements  $u_h$ defined in \eqref{defls}.

 {\rm (iii) (Properties of the level set flow)}  For all but countably many $\lambda \in \R$, the fattening phenomenon does not occur and,  in fact,
 {$\partial  \{(x,t)\,:\, u(x, t) < \lambda\} = \{(x,t)\,:\, u(x, t) = \lambda\}$, i.e., 
%\begin{align}\label{eq:nonfattening}
%& \{(x,t)\,:\, u(x, t) < \lambda\} = \mathrm{Int\,}(\{(x,t)\,:\, u(x, t) \le \lambda\})\,, \vspace{5pt} \\
%\label{eq:nonfatteningbis}
%& \overline {\{(x,t)\,:\, u(x, t) < \lambda\}} = \{(x,t)\,:\, u(x, t) \le \lambda\}\,.
%\end{align}
{
 \beq
 \begin{array}{rcl}\label{eq:nonfattening}
 \{(x,t)\,:\, u(x, t) < \lambda\}& = &\mathrm{Int\,}(\{(x,t)\,:\, u(x, t) \le \lambda\})\,, \vspace{5pt} \\
 \overline {\{(x,t)\,:\, u(x, t) < \lambda\}} &=& \{(x,t)\,:\, u(x, t) \le \lambda\}\,.
 \end{array}
 \eeq
}

Moreover,  for every  $\lambda$ such that \eqref{eq:nonfattening} holds true} the  sublevel set 
$\{(x,t)\,:\, u(x, t) \le \lambda\}$ is the unique  solution
to  \eqref{oee} in the sense of Definition~\ref{Defsol}, with initial datum $E^{0,\lambda}$, and 
\beq\label{eq:nonfatteningbiz}
E_{\lambda, h}\stackrel{\mathcal K}{\longrightarrow} \{(x,t)\,:\, u(x, t) \le \lambda\}\quad\text{and}\quad
{ (\mathrm{Int\, }E_{\lambda, h})^c}\stackrel{\mathcal K}{\longrightarrow} \{(x,t)\,:\, u(x, t) \geq \lambda\}\,.
\eeq
Finally, for all $\lambda\in \R$ the sets $\{(x,t)\,:\, u(x, t) \le \lambda\}$ and $\{(x,t)\,:\, u(x, t) < \lambda\}$ are respectively the maximal superflow and minimal sublow with initial datum $E^{0,\lambda}$.
\end{theorem}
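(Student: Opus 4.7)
The overall strategy is the classical level-set construction via minimizing movements: extract a continuous limit $u$ of $u_h$, identify its sub/superlevels with Kuratowski limits of $E_{\lambda,h}$ (which are super/subflows by Theorem~\ref{th:ATW}), and then reduce (i)--(iii) to the comparison results of Section~\ref{sec:comp}.

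\emph{Compactness.} The spatial modulus $\widehat\omega$ established just before the statement and the temporal equicontinuity of Lemma~\ref{lemUCT} make $\{u_h\}_{h\le h_0}$ equicontinuous on compacta, uniformly in $h$. Ascoli--Arzel\`a together with a diagonal argument produces a sequence $h_n\to 0$ and a continuous function $u$ with $u_{h_n}\to u$ locally uniformly, $u(\cdot,0)=u^0$.

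\emph{Sandwich and solution property.} Fix a countable dense set $D\subset\R$. A further diagonal extraction yields Kuratowski limits $E_\lambda(t)$ of $E_{\lambda,h_n}(t)$ and $A_\lambda(t)^c$ of $(\mathrm{Int}\,E_{\lambda,h_n}(t))^c$ for every $\lambda\in D$, and Theorem~\ref{th:ATW} ensures $E_\lambda$ is a superflow and $A_\lambda$ a subflow, both with initial datum $E^{0,\lambda}$. Combining \eqref{eq:bc} with local uniform convergence: if $u(x,t)<\lambda$ then $(x,t)\in E_{\lambda,h_n}$ eventually, so $(x,t)\in E_\lambda$; conversely, if $(x_n,t_n)\in E_{\lambda,h_n}$ with $(x_n,t_n)\to (x,t)$, then $u_{h_n}(x_n,t_n)\le\lambda$ forces $u(x,t)\le\lambda$. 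Hence
\[
\{u<\lambda\}\;\subseteq\;A_\lambda\;\subseteq\;E_\lambda\;\subseteq\;\{u\le\lambda\}\qquad(\lambda\in D),
\]
and using Remark~\ref{rm:dcp} and monotonicity in $\lambda$ one extends this to all $\lambda\in\R$ by setting $E_\lambda:=\bigcap_{D\ni\lambda'>\lambda}E_{\lambda'}$ and $A_\lambda:=\bigcup_{D\ni\lambda'<\lambda}A_{\lambda'}$. The family $\{\{u=\lambda\}\}_{\lambda\in\R}$ is a pairwise disjoint collection of closed subsets of the separable space $\R^N\times[0,+\infty)$, so the exceptional set $\mathcal N:=\{\lambda:\mathrm{Int}\,\{u=\lambda\}\neq\emptyset\}$ is countable; for $\lambda\notin\mathcal N$, \eqref{eq:nonfattening} holds and the sandwich collapses to $E_\lambda=\{u\le\lambda\}$ and $A_\lambda=\{u<\lambda\}$. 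Thus $u$ is a level-set solution in the sense of Definition~\ref{deflevelset1}.

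\emph{Uniqueness, approximation, extremality.} Uniqueness in~(i) is Theorem~\ref{th:lscomp} applied both ways. Since every subsequence of $\{u_h\}$ has a sub-subsequence converging to the \emph{same} $u$, the whole family converges locally uniformly, giving~(ii); the uniqueness of $\{u\le\lambda\}$ as a solution of \eqref{oee} for $\lambda\notin\mathcal N$ and the Kuratowski convergence \eqref{eq:nonfatteningbiz} then follow from the sandwich and Theorem~\ref{th:compar}. For the extremality assertion in~(iii): given a superflow $F$ with initial datum $E^{0,\lambda}$, pick $\lambda'>\lambda$ with $\lambda'\notin\mathcal N$; the uniform continuity of $u^0$ gives $\dist^{\pso}(E^{0,\lambda},(E^{0,\lambda'})^c)>0$, so Theorem~\ref{th:compar} yields $F(t)\subseteq A_{\lambda'}(t)\subseteq\{u(\cdot,t)\le\lambda'\}$, and letting $\lambda'\searrow\lambda$ along $\mathcal N^c$ gives $F\subseteq\{u\le\lambda\}$; the dual argument gives minimality of $\{u<\lambda\}$. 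The main obstacle is the sandwich/non-fattening step: one must propagate the Kuratowski identification from the countable dense set $D$ to \emph{every} $\lambda$ using discrete monotonicity, and then invoke separability to discard the exceptional fattening levels in order to get the clean super/subflow identification required by Definition~\ref{deflevelset1}.
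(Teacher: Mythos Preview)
Your overall architecture is sound and matches the paper's, but there is a genuine gap in the non-fattening step. You set
\[
\mathcal N:=\{\lambda:\mathrm{Int}\,\{u=\lambda\}\neq\emptyset\}
\]
and assert that for $\lambda\notin\mathcal N$ the two conditions in \eqref{eq:nonfattening} hold. This implication is false. Consider $u(x)=|x|$ at $\lambda=0$: then $\{u=0\}=\{0\}$ has empty interior, yet $\overline{\{u<0\}}=\emptyset\neq\{0\}=\{u\le 0\}$, so the second line of \eqref{eq:nonfattening} fails. Dually, $u(x)=-x^2$ at $\lambda=0$ has $\{u=0\}=\{0\}$ with empty interior, but $\mathrm{Int}\{u\le 0\}=\R\neq\R\setminus\{0\}=\{u<0\}$, so the first line fails. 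The point is that $\{u=\lambda\}$ can have empty interior while $\lambda$ is still a (non-strict) local extreme value of $u$, and it is precisely the local extreme values that violate \eqref{eq:nonfattening}. Your disjointness-of-level-sets argument only captures the levels where $u$ is locally constant, which is strictly smaller than the true exceptional set; hence your sandwich does not collapse where you claim it does, and you cannot conclude that $E_\lambda=\{u\le\lambda\}$, $A_\lambda=\{u<\lambda\}$ for a.e.\ $\lambda$.

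The paper fixes this with a different countability argument: for each $(x,t)$ the map $\lambda\mapsto\dist\bigl((x,t),\{u\le\lambda\}\bigr)$ is monotone, hence has at most countably many discontinuities; since these distance maps are equi-Lipschitz in $(x,t)$, the union of their discontinuity sets over a countable dense family of base points already captures all discontinuities, giving a countable exceptional set $N_1$. For $\lambda\notin N_1$ one obtains Kuratowski continuity $\{u\le\lambda'\}\to\{u\le\lambda\}$ as $\lambda'\to\lambda$, which (approaching from below) gives $\overline{\{u<\lambda\}}=\{u\le\lambda\}$; the symmetric argument on $\{u\ge\lambda\}$ yields the first line of \eqref{eq:nonfattening}. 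Once this is repaired, the rest of your proof (compactness, sandwich, uniqueness via Theorem~\ref{th:lscomp}, extremality via Theorem~\ref{th:compar}) goes through as you wrote it and is essentially the paper's argument. A minor additional point: your extension of $E_\lambda,A_\lambda$ from the dense set $D$ to all $\lambda$ by intersections/unions is not needed once the non-fattening argument is corrected, and in any case you would have to justify that the resulting sets are still super/subflows (the paper does this at the very end via the stability in Remark~\ref{rm:stability}).
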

\begin{proof}
The arguments rely on  Theorems~\ref{th:compar} and \ref{th:ATW},  and are somewhat standard
(see for instance~\cite{ChNoIFB,CMP3}). For the reader's convenience we outline below a self-contained proof.

\noindent {\it Step 1.} (Convergence) By the equicontinuity properties observed before the family  $\{u_h\}$ 
is relatively compact with respect to the local uniform convergence in $\R^N\times[0,+\infty)$. Observe now that if $u$ is a  cluster point for $\{u_h\}$, then by \eqref{eq:bc} and by Theorem~\ref{th:ATW} for all $\lambda\in \R$ there exist  a superflow $E_\lambda$ and a subflow $A_\lambda$, with initial datum $u^0$,  such that
\beq\label{eq:bc2}
\{(x,t)\,:\, u(x, t)<\lambda\}\subseteq A_\lambda\subseteq  E_{\lambda}\subseteq \{(x,t)\,:\, u(x, t)\leq \lambda\}\,.
\eeq
Let $u_1$, $u_2$ be two cluster points for $\{u_h\}$ and for any $\lambda\in \R$ let $A^i_\lambda$, $E^i_\lambda$ be as in 
\eqref{eq:bc2}, with $u$ replaced by $u_i$, $i=1,2$. 
Fix $\lambda<\lambda'$.  Since 
$$
\dist^{\pso}(E^1_{\lambda}(0), \R^N\setminus A^2_{\lambda'}(0))\geq 
\dist^{\pso}(\{u^0\leq \lambda\}, \R^N\setminus \{u^0< \lambda'\})>0\,,
$$
where the last inequality follows from the uniform continuity of $u^0$, 
it follows from Theorem~\ref{th:compar} that  $E^1_{\lambda}(t) \subseteq A^2_{\lambda'}(t)$ and, in turn,  from \eqref{eq:bc2} 
$$
\{u_1(\cdot, t)<\lambda\}\subseteq E^1_{\lambda}(t) \subseteq A^2_{\lambda'}(t) \subseteq \{ u_2(\cdot , t)\leq \lambda'\}\
$$
for all $t>0$. The arbitrariness of $\lambda<\lambda'$ in the above chain of inequalities clearly implies that $u_2\leq u_1$. Exchanging the role of $u_1$ and $u_2$, we get in fact $u_1=u_2$. Thus, there exists a unique  cluster point $u$ and  $u_h\to u$  locally uniformly  in $\R^N\times [0,+\infty)$ as $h\to 0^+$.

\noindent{\it Step 2.} {(Proof of  \eqref{eq:nonfattening})}
%, \eqref{eq:nonfatteningbis})}
For $\lambda\in \R$ set 
$K_\lambda:=\{(x,t)\,: u(x,t)\leq \lambda\}$. Since for any $(x,t)$ the map $\lambda\mapsto \dist((x,t), K_\lambda)$ is non-increasing (here $\dist$ denotes the Euclidean distance in $\R^N\times [0, +\infty)$) and since $\dist (\cdot, K_\lambda)$ is 
(Lipschitz) continuous, we easily deduce the existence of a countable set $N_1\subset \R$ such that for all $\lambda_0\in \R\setminus N_1$ the map $\lambda\mapsto \dist((x,t), K_\lambda)$ is continuous at $\lambda_0$ for all
 $(x,t)\in \R^N\times [0,+\infty)$. In turn, by equicontinuity, it follows that $\dist(\cdot, K_\lambda)\to \dist(\cdot, K_{\lambda_0})$ 
  locally uniformly  in $\R^N\times [0,+\infty)$, or equivalently, 
  $K_\lambda \stackrel{\mathcal K}{\longrightarrow} K_{\lambda_0}$  as $\lambda\to \lambda_0$. In particular, by taking $\lambda_n\nearrow \lambda_0$ and using that $K_{\lambda_n}\subset\{u<\lambda_0\}$ for all $n$, we deduce 
  $\overline {\{(x,t)\,:\, u(x, t) < \lambda_0\}} = \{(x,t)\,:\, u(x, t) \le \lambda_0\}$. Analogously,  one can show that there exists a countable set 
  $N_2\subset\R$ such that for all $\lambda_0\not\in N_2$ we have 
  {$\{(x,t)\,:\, u(x, t) \ge \lambda\} \stackrel{\mathcal K}{\longrightarrow} \{(x,t)\,:\, u(x, t) \ge \lambda_0\}$} as $\lambda\to \lambda_0$, so that $\overline {\{(x,t)\,:\, u(x, t) >\lambda_0\}} = \{(x,t)\,:\, u(x, t) \geq\lambda_0\}$. We conclude that for all $\lambda\not\in N_0:=N_1\cup N_2$, {\eqref{eq:nonfattening} %and \eqref{eq:nonfatteningbis}
holds.}

\noindent{\it Step 3.} (Conclusion)  Fix $\lambda\in \R\setminus N_0$ and let $E_\lambda$ and $(A_\lambda)^c$ be Kuratowski limits along a common subsequence of $E_{\lambda, h}$ and $(\mathrm{Int\, }E_{\lambda, h})^c$, respectively. Then, by Theorem~\ref{th:ATW}, $E_\lambda$ and $A_\lambda$ are a superflow and subflow, respectively, with initial datum $u^0$. Moreover,  
\eqref{eq:bc2} holds. 
Thus, recalling  \eqref{eq:nonfattening}, 
$$
A_\lambda=\{(x,t)\,:\, u(x, t) < \lambda\}\,, \quad E_\lambda=\{(x,t)\,:\, u(x, t) \leq \lambda\}\, \quad \text{and}\quad A_\lambda=\mathrm{Int\, }E_\lambda\,. 
$$
This shows that $E_\lambda$ is a solution to  the curvature flow  with initial datum 
$E^{0,\lambda}$. 

Let now $E'$ be any superflow with initial datum $E^{0,\lambda}$. Then, for  all $\lambda'>\lambda$, with $\lambda'\not\in N_0$, thanks to Theorem~\ref{th:compar} we easily deduce that $E'\subset \{(x,t)\,:\, u(x,t)<\lambda'\}$. Thus, 
{$E'\subset E_\lambda$}. Analogously, if  $A'$ is a subflow with initial datum $E^{0,\lambda}$
one has  $\{(x,t)\,:\, u(x,t)\leq  \lambda'\}\subset A'$ for all $\lambda'<\lambda$, with $\lambda'\not\in N_0$, and thus
{$A_\lambda \subset A'$. 
Therefore, we have
\begin{align*}
& E' \subseteq E_\lambda = \bar A_\lambda \subseteq \bar{A'} \subseteq E',\\
&  A' = \mathrm{Int\, } E' \subseteq \mathrm{Int\, } E_\lambda = A_\lambda \subseteq A'.
\end{align*}
This shows that $E=E', A=A'$, i.e., the uniqueness of the solution to \eqref{oee}, starting from  $E^{0,\lambda}$. }
The same argument above shows that for all $\lambda\in \R$  if  $E'$ is a superflow  with initial datum $E^{0,\lambda}$, then
 $E'\subset \{(x,t)\,:\, u(x,t)\leq \lambda\}$. 
Let now $\lambda_n\searrow \lambda$, $\lambda_n\not\in N_0$ for all $n$. Since  
$\{(x,t)\,:\, u(x, t) \le \lambda_n\} \stackrel{\mathcal K}{\longrightarrow} \{(x,t)\,:\, u(x, t) \le \lambda\}$ it easily follows 
from the stability property stated in Remark~\ref{rm:stability} that   $\{(x,t)\,:\, u(x, t) \le \lambda\}$ is itself a superflow, thus the maximal superflow with initial datum $E^{0,\lambda}$. Analogously, one can show that $\{(x,t)\,:\, u(x,t)<  \lambda\}$ is the minimal subflow with initial datum $E^{0,\lambda}$.

Finally,  the uniqueness of the level set flow follows from the comparison principle proved in  Theorem~\ref{th:lscomp}. 
\end{proof}
% {
% \begin{remark}[Continuity in time and no fattening]
% Let $E$ be any solution to  \eqref{oee}.  By Lemma \ref{lem:uniformcontrol} (Applied to $E$ and to $\overline{E^c}$) it easily follows that the function $t\to E(t)$ is continuous with respect to the Kuratowski, and in fact with respect to the Hausdorff convergence. Moreover, by comparison it easily follows that $A(t) = \text{Int } E(t)$  for all $t>0$. 
% \end{remark}
% }
We conlcude this section with the following remarks.
\begin{remark}[Independence of the initial level set function]\label{rm:geo}
From the minimality and the maximality properties stated at the end of Theorem~\ref{th:phiregularlevelset}, we immediately deduce that if $\{u^0<\lambda\}=\{v^0<\lambda\}$, then $\{u(\cdot, t)<\lambda\}=\{v(\cdot, t)<\lambda\}$ for all $t>0$. Analogously, if 
$\{u^0\leq \lambda\}=\{v^0\leq \lambda\}$, then  $\{u(\cdot, t)\leq \lambda\}=\{v(\cdot, t)\leq \lambda\}$ for all $t>0$.
\end{remark}
\begin{remark}[Stability of level set flows with respect to varying anisotropies and mobilities]\label{rm:stability2}
Let $\{\p_n\}$ and $\{\psi_n\}$ be sequences of anisotropies and mobilities, respectively, such that   $\psi_n$ is $\p_n$-regular  uniformly in $n$
(\textit{cf} Remark~\ref{rm:stability}).
Assume also that 
 $\p_n\to \p$ and  $\psi_n\to \psi$. Let $u_n$ be the unique level set solution in the sense of Definition~\ref{deflevelset1},
with $\p$ and $\psi$ replaced by $\p_n$ and $\psi_n$, respectively, and with initial datum $u^0$. Then $u_n\to u$ locally uniformly, where $u$ is the unique level set solution in the sense of Definition~\ref{deflevelset1}, with anisotropy $\p$,  mobility $\psi$ and initial datum $u^0$. 

To see this, we start by observing that the sequence $\{u_n\}$ is equicontinuous in $\R^N\times[0, T]$ for all $T>0$ (see the discussion at the beginning of Subsection~\ref{subsec:levelset} and before Definition~\ref{deflevelset1}). Thus, up to a not relabeled 
subsequence, we may assume that $u_n\to u$ locally uniformly in $\R^N\times[0,+\infty)$. It is enough to show that $u$ is  a solution in the sense of Definition~\ref{deflevelset1}, since then we conclude by uniqueness.  Let now $N_0$ be a countable set such that 
if $\lambda\not\in N_0$, then \eqref{eq:nonfattening} holds for $u$ and for $u_n$ for all $n$.  Set $E^n:=\{u_n\leq\lambda\}$ and
$d_n(\cdot, t):=\dist^{\pso_n}(\cdot, E^n(t))$ in $\R^N\setminus E^n(t)$. 
%By the assumption on $\p_n$, $\psi_n$, 
{By Theorem~\ref{th:phiregularlevelset} we have that $E^n$ is a superflow with anisotropy $\p_n$, mobility $\psi_n$ and initial datum $E^{0,\lambda}$ for all $n$. %$d_n$ satisfies \eqref{straponzina21ter} with constants independent of $n$.
%, and there exists a corresponding Cahn-Hoffmann field $z_n$ as in Definition~\ref{Defsol} such that \eqref{eq:stability} holds, again with constants independent of $n$. 
Extracting a further subsequence, if needed, we may also assume that 
$E^n\stackrel{\mathcal K}{\longrightarrow} E$ and  $(\mathrm{Int\,}{E}^n)^c\stackrel{\mathcal K}{\longrightarrow} A^c$ for suitable  $E$ and $A$  such that 
$$
\{u<\lambda\}\subseteq A\subseteq E\subseteq \{u\leq\lambda\}\,.
$$
But then, recalling \eqref{eq:nonfattening}, $\{u<\lambda\}=A$, $E=\{u\leq\lambda\}$, and $A=\mathrm{Int\, }E$. Moreover,  by Remark~\ref{rm:stability}, $E$ is a superflow with anisotropy $\p$, mobility $\psi$ and initial datum $E^{0,\lambda}$.
}
 Analogously, one can show that $A$ is  subflow with anisotropy $\p$, mobility $\psi$ and initial datum $E^{0,\lambda}$. We conclude that 
$E=\{u\leq\lambda\}$ is a solution in the sense of Definiton~\ref{Defsol} with initial datum $E^{0,\lambda}$ for all but countably many $\lambda$'s, thus showing that $u$ is a level set solution in the sense of Definition~\ref{deflevelset1}, with initial datum $u^0$.
\end{remark}

\section{The case of general mobilities: Existence and uniqueness by approximation}\label{sec:genmob}
In this section we prove {one of the main results of this paper:} namely the existence via approximation by $\p$-regular mobilities of a  unique solution to the level set crystalline flow with a general mobility. As a byproduct of the proof we will also obtain uniqueness, up to fattening, of the flat flow, i.e. of the flow obtained by the ATW scheme. The main results are stated and proven in Subsection~\ref{subsec:final}. In the next subsection we collect some preliminary stability estimates on the ATW scheme.

\subsection{Stability of the ATW scheme with respect to  changing  mobilities}\label{subsec:stability}

We start with the following remark:
\begin{remark}\label{regE} \textup{For any norm $\eta$ and any closed
set $E\subset\R^N$, {it is easily seen that 
\[
d^\eta_{(E)_r^\eta} \le d^\eta_E - r
\]
where we have used the notation in~\eqref{nota} (with $\po$  replaced by $\eta$).}
% Indeed, if $d^\eta_E\ge 0$
%it just stems from the fact that $\eto(\nabla d^\eta_{(E)_r^\eta})=1$~a.e.~and $d^\eta_{(E)_{r}^\eta}\le -r$ on $\partial E$, while if $d^\eta_E(x)<0$, introducing
%$y\in\partial (E)_r^\eta$ such that $d^\eta_{(E)_r^\eta}(x)=-\eta(x-y)$ and $y'\in [x,y]\cap\partial E$, we observe that $\eta(x-y')\ge -d_E(x)$ and $\eta(y'-y)\ge r$ so that $\eta
%(x-y)=\eta(x-y')+\eta(y'-y)\ge -d_E(x)+r$.
}
\end{remark}

We recall (see Remark~\ref{rm:notation}) that  given a closed set $H$, $H_{g,h}^{\psi,k}$ denotes {a} $k$-th minimizing movement starting from  $H$,  with mobility $\psi$, forcing term $g$, and time step $h$ (the given and anisotropy $\p$).  As already observed,   in the previous notation the dependence on  the anisotropy  $\p$ is  omitted since we think of $\p$ as fixed.
We finally  recall that by an admissible forcing term $g$ we mean a function   satisfying assumptions H1), H2) of 
Subsection~\ref{stass}.

The next lemma establiishes a comparison result for minimizing movements with different 
forcing terms.
\begin{lemma}\label{discong}  
Let $\psi$, $\eta$ be two norms such that    $\psi \le\beta\eta $ 
 for some  $\beta>0$, and  
let $g_1$, $g_2$ be admissible forcing terms  satisfying 
$$
g_2-g_1\le  c<+\infty \textup{ in } \R^N\times[0,+\infty)\,.
$$
 If $E\subset F$ are closed sets with $\dist^{\eto}(E,\R^N\setminus F):=\Delta >0$,  then,  for all $k\in \N$ we have

\begin{equation}\label{eq:stimg1g2}
{
\dist^{\eto}(E_{g_1,h}^{\psi,k},\R^N\setminus F_{g_2,h}^{\psi,k}) \ge
\left(\Delta+\frac{c}{L_{\eto}}\right)(1-\beta L_{\eto}h)^k - \frac{c}{L_{\eto}},
}
\end{equation}
{where $L_{\eto}$ is the Lipschitz constant of $g_1$ and $g_2$ with respect to $\eto$.}
\end{lemma}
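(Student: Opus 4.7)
The plan is to prove Lemma~\ref{discong} by induction on $k$, with $k=0$ being exactly the hypothesis. The inductive step reduces to proving the following \emph{one-step} estimate: if $E\subset F$ are closed with $\dist^{\eto}(E,F^c)\ge \Delta'>0$, then
\[
\dist^{\eto}(E^{\psi,1}_{g_1,h},(F^{\psi,1}_{g_2,h})^c) \ge \Delta'(1-\beta L_{\eto}h)-\beta c h.
\]
Setting $\delta_k = \dist^{\eto}(E^{\psi,k}_{g_1,h},(F^{\psi,k}_{g_2,h})^c)+c/L_{\eto}$, the one-step bound is equivalent to $\delta_{k+1}\ge \delta_k(1-\beta L_{\eto}h)$, which iterates immediately to $\delta_k\ge\delta_0(1-\beta L_{\eto}h)^k$ and yields \eqref{eq:stimg1g2}.

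For the one-step estimate, the main tool is the comparison principle of Proposition~\ref{prop:ATW} applied after translating the data. Fix $v\in\R^N$ with $\eto(v)<\Delta'$; then $E+v\subseteq F$, so $d^\psi_E(\,\cdot-v)=d^\psi_{E+v}(\,\cdot\,)\ge d^\psi_F(\,\cdot\,)$ everywhere (smaller set has larger signed distance). Combining this with the $L_{\eto}$-Lipschitz estimate $g_1(x-v,s)\ge g_1(x,s)-L_{\eto}\eto(v)$ and the assumption $g_1\ge g_2-c$, a direct integration over $[kh,(k+1)h]$ shows that the right-hand sides $f_i$ of the incremental problem \eqref{eq:iterk} satisfy
\[
f_1(x-v) + (c+L_{\eto}\eto(v))h \;\ge\; f_2(x) \qquad \forall x\in\R^N.
\]
Since $u_1(\,\cdot-v)+(c+L_{\eto}\eto(v))h$ solves \eqref{eq:iterk} with right-hand side $f_1(\,\cdot-v)+(c+L_{\eto}\eto(v))h$ (by translation invariance of the $\p$-total variation operator and invariance under additive constants), Proposition~\ref{prop:ATW} yields the key functional inequality
\[
u_2(x) \;\le\; u_1(x-v)+(c+L_{\eto}\eto(v))h \qquad \forall x\in\R^N.
\]

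To convert this into a set-theoretic separation, combine it with the Lipschitz regularity of $u_1$ (derived as in \eqref{eq:ineqd} but measured in the $\eto$-metric via $\psi\le\beta\eta$ and the $L_{\eto}$-Lipschitzness of $g_1$), which provides the factor $(\beta+L_{\eto}h)$ relating variations of $u_1$ to $\eto$-displacements. Optimizing over the choice of $v$ so that the translation budget $\eto(v)$ exactly balances the additive loss $(c+L_{\eto}\eto(v))h$ against this Lipschitz cost produces the $(1-\beta L_{\eto}h)$ contraction factor and the additive term $\beta c h$ in the one-step estimate. A cleaner equivalent route, which avoids tracking constants by hand, is to run the above argument on the enlarged set $(E)^{\eto}_{\Delta'}\subseteq F$ (using Remark~\ref{regE} to compare $d^\psi_{(E)^{\eto}_{\Delta'}}$ with $d^\psi_E$ via the inclusion $W^\eta(0,r)\subset W^\psi(0,\beta r)$), and read off the containment $E^{\psi,1}_{g_1,h}+W^\eta(0,\Delta'(1-\beta L_{\eto}h)-\beta c h)\subseteq F^{\psi,1}_{g_2,h}$ directly.

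The main obstacle is precisely this last bookkeeping step: the pointwise inequality on $u_1,u_2$ does not by itself give the inclusion $E^{\psi,1}_{g_1,h}+v\subseteq F^{\psi,1}_{g_2,h}$, because the additive error $(c+L_{\eto}\eto(v))h$ is positive and the Lipschitz estimate on $u_i$ only yields upper bounds outside the sublevel sets. One must therefore carefully absorb this error by shrinking the translation $v$ by a $(1-\beta L_{\eto}h)$ factor and using that $\psi\le\beta\eta$ converts $\psi$-Lipschitz bounds on $u_1$ into $\eto$-displacement bounds with the correct $\beta$-dependence; this is the source of the exact constants appearing in the statement and the heart of the proof.
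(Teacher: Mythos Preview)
Your overall strategy---induction on $k$, translation by $\tau$ with $\eto(\tau)<\Delta$, and comparison via Proposition~\ref{prop:ATW}---matches the paper's, and your functional inequality $u_2(x)\le u_1(x-v)+(c+L_{\eto}\eto(v))h$ is correct. The gap is in the final step, where both of the routes you outline break down.

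For the Lipschitz route: the estimate analogous to \eqref{eq:ineqd} only gives $\psi(\nabla u_1)\le 1+Lh$, i.e.\ a Lipschitz bound in the $\pso$-metric. From $\psi\le\beta\eta$ one obtains (by polarity) $\eto\le\beta\pso$, not the reverse inequality, so you cannot bound $\pso$-displacements by $\eto$-displacements and hence cannot convert this into an $\eto$-Lipschitz bound for $u_1$. Thus ``absorbing the error by shrinking $v$'' does not produce the required inclusion with the stated constants. For the enlarging route: the inclusion you invoke, $W^\eta(0,r)\subset W^\psi(0,\beta r)$, goes the wrong way; the correct consequence of $\eto\le\beta\pso$ is $W^\psi(0,r)\subset W^\eta(0,\beta r)$, equivalently $(E)^{\pso}_r\subseteq (E)^{\eto}_{\beta r}$.

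The paper closes the step as follows. Set $\widehat c:=c+L_{\eto}\Delta$; then for any $\tau$ with $\eto(\tau)\le\Delta$ your inequality yields $f_1(\cdot-\tau)\ge d^{\pso}_{E+\tau}+\int g_2-\widehat c h$. Apply Remark~\ref{regE} \emph{in the $\pso$-metric} to absorb the additive constant into the set: $d^{\pso}_{E+\tau}-\widehat c h\ge d^{\pso}_{(E)^{\pso}_{\widehat c h}+\tau}$. Comparison then gives $E^{\psi,1}_{g_1,h}+\tau\subseteq\bigl((E)^{\pso}_{\widehat c h}+\tau\bigr)^{\psi,1}_{g_2,h}$. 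Now use the correct inclusion $(E)^{\pso}_{\widehat c h}\subseteq(E)^{\eto}_{\beta\widehat c h}$ and monotonicity of the scheme: if $\eto(\tau)+\beta\widehat c h\le\Delta$ then $(E)^{\eto}_{\beta\widehat c h}+\tau\subseteq F$, hence $E^{\psi,1}_{g_1,h}+\tau\subseteq F^{\psi,1}_{g_2,h}$. This yields $\dist^{\eto}(E^{\psi,1}_{g_1,h},\R^N\setminus F^{\psi,1}_{g_2,h})\ge\Delta-\beta\widehat c h=\Delta(1-\beta L_{\eto}h)-\beta c h$, which is exactly your one-step estimate.
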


\begin{proof}
We start by considering  the case $k=1$. Set $\widehat c:=c+L_{\eto}\Delta$.
Let $(u,z)$ solve
\[
-h  \, \Div z + u = d^{\pso}_E + \int_0^h g_1(\cdot,s) \, ds, \quad z\in\partial\p(\nabla u).
\]
Let $\tau\in\R^N$, with $\eto(\tau)\leq \Delta$.
By our assumptions on $g_1,g_2$, one has for all $s$, $g_1(\cdot-\tau,s)
\ge g_2(\cdot,s)-c-L_{\eto}\eto(\tau)\ge g_2(\cdot,s)-\widehat c$, hence
\begin{align*}
-h \, \Div z(\cdot - \tau) + u(\cdot - \tau) &= d^{\pso}_{E +\tau} + \int_0^h g_1(\cdot - \tau ,s) \, ds
\\
&\ge d^{\pso}_{E+ \tau} +\int_0^h g_2(\cdot ,s)\, ds -  \widehat c h 
\\
&\ge d^{\pso}_{(E)^{\pso}_{\widehat c h}  + \tau} +\int_0^h g_2(\cdot ,s)\, ds,  
\end{align*}
where the last inequality is Remark \ref{regE}.
Thus by  comparison, and using 
%$W^{\psi}(0,1)\subseteq W^{\eta}(0, \beta)$,
{$\psi\le \beta \eta$, }
we deduce that
$$
E^{\psi,1}_{g_1,h} + \tau \subseteq  \bigl( (E)^{\pso}_{\widehat c h}  + \tau \bigr)^{\psi,1}_{g_2,h} \subseteq  \bigl( (E)^{\eto}_{\beta \widehat c h}  + \tau \bigr)^{\psi,1}_{g_2,h}\subseteq  
  F ^{\psi,1}_{g_2,h},
$$
provided that $(E)^{\eto}_{\beta \widehat c h}  + \tau \subseteq F$. 
The latter condition  holds true if  
$\eto(\tau) + \beta \widehat c h \le \Delta$. We deduce that
{
\[
\dist^{\eto} \left( E^{\psi,1}_{g_1,h} , \, \R^N \setminus F^{\psi,1}_{g_2,h} \right) \geq 
\Delta-\beta\widehat ch = \Delta(1-\beta L_{\eto}h) - \beta ch = 
{\Big(\Delta + \frac{c}{L_{\eto}}\Big)}(1-\beta L_{\eto}h) - \frac{c}{L_{\eto}}.
\]
}
The conclusion easily follows by induction.
\end{proof}

In the next lemma we compare the (discrete-time) solutions corresponding to different but close mobilities and  forcing terms.  
\begin{lemma}\label{lem:comparmobilities}
For any $\beta$,  $G$, $\Delta>0$, and $\theta\in (0,1)$, there exist positive   $\delta_ 0$, $h_0$, depending on  all the previous constants,  on the dimension $N$ and on the anisotropy $\p$, and there exists  $c_0>0$ depending  on the same quantities but $\Delta$, with the following property: 
Let $g$ be an admissible forcing term, with $\|g\|_\infty\leq G$,  and let $\psi_1$,   $\psi_2$ be two mobilities satisfying    
 \begin{equation}\label{eq:compani}
\psi_i  \le\ovc\p  \qquad \text{ for }  \,  i\in\{1,\, 2\}
 \end{equation} 
 and 
 \begin{equation}\label{eq:psivicino}
 \psi_2 \le \psi_1 \le (1+\delta)\psi_2  
 \end{equation}
 for some $0<\delta\leq \delta_0$. 
If  $E$ and $F$ are two closed sets with $\dist^{\po}(E,\R^N\setminus F)\ge\Delta$, then,   
setting  $\widetilde g:= g- c_0 \frac{\delta}{\Delta}$, for all $0<h\le h_0$  we have
\beq\label{whichis}
{
\dist^{\po}(E_{g,h}^{\psi_1,k},\R^N\setminus F_{\widetilde g,h}^{\psi_2,k}) 
 \ge \Delta \big(1- \beta L_{\po} h\big)^k  
}
\eeq
for all $k\in \N$ such that the right-hand side of the above inequality is larger than $\theta\Delta$. 
Here $L_{\po}$ denotes the Lipschitz constant of $g$ with respect to the metric $\po$.
\end{lemma}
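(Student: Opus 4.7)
The proof is by induction on $k$. The base case $k=0$ is the hypothesis. Writing $\Delta_k := \Delta(1-\beta L_{\po}h)^k$, $E_k := E_{g,h}^{\psi_1,k}$ and $F_k := F_{\tilde g,h}^{\psi_2,k}$, I assume inductively $\dist^{\po}(E_k,F_k^c)\ge \Delta_k$ and must show the analogous bound at step $k{+}1$, as long as $\Delta_{k+1}\ge\theta\Delta$. The main obstacle compared with Lemma~\ref{discong} is the mobility mismatch $\psi_1\ne\psi_2$: if $\psi_1=\psi_2$, the conclusion would follow directly from Lemma~\ref{discong}. The plan is to absorb the mismatch into the forcing shift $c_0\delta/\Delta$, with $c_0$ chosen a posteriori.

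To execute the inductive step I introduce the auxiliary $\psi_2$-evolution $E_k' := (E_k)_{g,h}^{\psi_2,1}$ and split the one-step comparison in two. Comparing $E_k'$ with $F_{k+1} = (F_k)_{\tilde g,h}^{\psi_2,1}$ is handled by Lemma~\ref{discong} applied with the common mobility $\psi_2$, $\eta=\p$, $g_1=g$, $g_2=\tilde g$ and $c = c_0\delta/\Delta$: using $\dist^{\po}(E_k,F_k^c)\ge\Delta_k$, this yields a preserved distance from $F_{k+1}^c$ that improves on $\Delta_k(1-\beta L_{\po}h)$ by an additive term of order $c_0\delta h/\Delta$. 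The remaining (core) step is to show that the $\psi_1$-evolution $E_{k+1} = (E_k)_{g,h}^{\psi_1,1}$ exceeds $E_k'$ in the $\po$-distance by at most $C\delta h/\Delta_k$ for a constant $C=C(\beta,G,N,\p)$. For this I exploit the inductive hypothesis: every $x\in F_k^c$ is the center of a $\p$-Wulff shape of radius $\Delta_k$ disjoint from $E_k$, so that enlargements of $E_k$ built by subtracting such Wulff shapes satisfy precisely the structural hypothesis of Lemma~\ref{lm:crucial}. Applying Lemma~\ref{lm:crucial} with both mobilities $\psi_1$ and $\psi_2$, using the Wulff-shape chain $W^{\psi_1}(x,R/(1+\delta))\subseteq W^{\psi_2}(x,R)\subseteq W^{\psi_1}(x,R)$ inherited from $\psi_2\le\psi_1\le(1+\delta)\psi_2$, and invoking the uniformity of the constants in Lemma~\ref{lm:crucial} with respect to the mobility (Remark~\ref{rm:mzerophi}), converts this $(1+\delta)$-closeness into the claimed $C\delta h/\Delta_k$ displacement. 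Choosing $c_0>C/\theta$ makes the gain of the first sub-step dominate the loss of the second, and the induction closes with the stated rate.

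The main technical obstacle is the mobility-comparison sub-step, i.e.\ turning the $(1+\delta)$-closeness of $\psi_1,\psi_2$ into a boundary displacement of order $\delta h/\Delta_k$. I expect to handle it by (a) trapping $E_{k+1}$ and $E_k'$ between explicit $\p$-Wulff-shape sub/super barriers of radius comparable to $\Delta_k$, as in Section~\ref{EWS}; (b) using the local comparison Lemma~\ref{lm:localcomp} with $\lambda=0$ to localize the argument to the $O(\sqrt h)$-thin band around $\partial E_k$ where both sublevel sets $\{u_i\le 0\}$ live, so that only the pointwise bound $|d_{E_k}^{\psi_1^\circ}-d_{E_k}^{\psi_2^\circ}|\le \delta\,d_{E_k}^{\psi_1^\circ}$ in that band matters; and (c) turning this pointwise bound into the required $\po$-displacement via the explicit rate $\sim h/\Delta_k$ coming from Lemma~\ref{lm:explicit}. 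The constants $h_0$ and $\delta_0$ are then fixed so that the hypotheses of Lemma~\ref{lm:crucial} are valid throughout the iteration range $\Delta_k\ge\theta\Delta$, producing a uniform $C$ depending only on $\beta$, $G$, $N$ and the ellipticity of $\p$.
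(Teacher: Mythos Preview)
Your approach is essentially correct and uses the same toolkit as the paper (Lemma~\ref{lm:crucial} to confine the one-step evolution, the pointwise bound $d^{\pso_2}-d^{\pso_1}\le \delta(d^{\pso_1})^+$ coming from~\eqref{eq:psivicino}, Lemma~\ref{lm:localcomp} to compare, and Lemma~\ref{discong} for the distance propagation). The organization, however, differs from the paper's in a way worth noting.

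The paper does \emph{not} split via the auxiliary set $E_k'=(E_k)^{\psi_2,1}_{g,h}$. Instead it introduces at each step the regularized set $H:=\bigl((E_k)^{\po}_{\Delta_k}\bigr)^{\po}_{-\Delta_k}$, whose complement is automatically a union of $\p$-Wulff shapes of radius $\theta\Delta$. Then Lemma~\ref{lm:crucial} confines \emph{both} one-step evolutions $H^{\psi_i,1}_{\cdot,h}$ to the thin strip $(H)^{\po}_{M_0h/(\theta\Delta)}$; on this strip the pointwise bound gives $d^{\pso_2}_H\le d^{\pso_1}_H+hc_0\delta/\Delta$ with $c_0=\beta M_0/\theta$, and a \emph{single} application of Lemma~\ref{lm:localcomp} yields the inclusion
\[
E^{\psi_1,1}_{g,h}\subseteq H^{\psi_1,1}_{g,h}\subseteq H^{\psi_2,1}_{\widetilde g,h}.
\]
The distance to $F^{\psi_2,1}_{\widetilde g,h}$ then follows from Lemma~\ref{discong} with $g_1=g_2=\widetilde g$ and $c=0$. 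So the forcing shift absorbs the mobility mismatch in one clean inclusion, and no separate ``displacement by $C\delta h/\Delta_k$'' estimate is needed.

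Your decomposition (gain from Lemma~\ref{discong} with $c<0$, then subtract a loss $C\delta h/\Delta_k$) would close, but your ``core step'' cannot be carried out on $E_k$ itself: Lemma~\ref{lm:crucial} needs the exterior Wulff-shape condition, which $E_k$ need not have. You must first pass to the regularized set (what you call ``enlargements of $E_k$ built by subtracting such Wulff shapes''), and once you do, your argument collapses to the paper's inclusion. In short: your plan is sound, but the paper's route is shorter because it recognizes that the inclusion $H^{\psi_1,1}_{g,h}\subseteq H^{\psi_2,1}_{\widetilde g,h}$ is itself a one-line consequence of local comparison, rather than something requiring barriers plus a separate displacement bound.
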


\begin{proof} 
% We divide the proof into two steps. 
%\noindent{\it Step 1.} %We assume first that $E$ is bounded.  
With the notation introduced in \eqref{nota}, set 
$H:=\bigl((E)_{\Delta}^{\po}\bigr)_{-\Delta}^{\po}$ and note that $E\subseteq H$ and $\dist^{\po}(H, \R^N\setminus F)\geq \Delta$.
Also, it is easy to see that for  $\theta\in (0,1)$ the set $\R^N\setminus H$ can be written as a union of closed $\p$-Wulff shapes
 of radius $\theta \Delta=:\Delta_0$. Thus, by Lemma~\ref{lm:crucial} (and recalling \eqref{eq:compani}) there exists $M_0$, depending on 
 $G$, $\beta$, $\p$, and the dimension $N$, and there exists $h_0$ depending on the same quantities {and} %, but also 
on $\Delta_0$, such that 
 \beq\label{stella}
 H_{\widehat g, h}^{\psi_i, 1}\subset (H)^{\po}_{\frac{M_0h}{  \Delta_0}}\quad
 \text{for $i=1,2$, for $0<h\leq h_0$ and for any admissible $\widehat g$ s.t. $\|\widehat g\|_\infty\leq   G+1$.}
 \eeq
By \eqref{eq:psivicino} it follows
that 
\[
\pso_1\le \pso_2 \le (1+\delta)\pso_1 
\]
and, in turn, one has
\begin{equation}\label{eq:comparsdist}
\begin{cases}
d^{\pso_1}_H (x)\le d^{\pso_2}_H(x)\le (1+\delta)d^{\pso_1}_H (x)& \text{if }x\not\in\mathring{H},\\[2mm]
(1+\delta)d^{\pso_1}_H (x)\le d^{\pso_2}_H(x)\le d^{\pso_1}_H (x)& \text{if }x\in\overline{H}\\
\end{cases}
\end{equation}
so that 
\[
d^{\pso_2}_H-d^{\pso_1}_H\le \delta \bigl(d^{\pso_1}_H \bigr)^+\,.
\]
In particular,
\beq
\label{eq:comparsdist2}
d^{\pso_2}_H\le d^{\pso_1}_H+h\frac{\beta M_0}{\theta } \frac{\delta  }{\Delta}\quad\text{in $(H)^{\po}_{\frac{M_0h}{\Delta_0}}$.} 
\eeq

Set $\delta_0:=\frac{\theta \Delta_0}{2\beta M_0}$, $c_0:=  \frac{\beta M_0}{\theta }$ and note that 
{
\beq\label{deltazero}
c_0 \frac{\delta  }{\Delta}\leq \frac{\theta}{2} \le \frac12 \qquad\text{for $0<\delta\leq \delta_0$.}
\eeq
}

\noop{QUESTO: SI!(corretto) Thus, setting $ \widetilde g:=g-c_0 \frac{\delta  }{\Delta}$, by
\eqref{stella} we have
$$
H_{g,h}^{\psi_1, 1},\, H_{\widetilde g,h}^{\psi_2, 1} \subset
(H)^{\po}_{\frac{M_0h}{\Delta_0}}
$$
provided that $0<\delta\leq \delta_0$, $0<h\leq h_0$
(recall  $\|g\|_\infty\leq G \leq G+\frac12$).
Thus, we may apply \eqref{eq:comparsdist2} and, being $H$  bounded, we
may apply Lemma~\ref{lm:localcomp} to deduce that}

Thus, setting $ \widetilde g:=g-c_0 \frac{\delta  }{\Delta}$, by \eqref{stella} we have
{
$$
H_{{g},h}^{\psi_1, 1},\, H_{\widetilde g,h}^{\psi_2, 1} \subset (H)^{\po}_{\frac{M_0h}{\Delta_0}}
$$
}
provided that $0<\delta\leq \delta_0$, $0<h\leq h_0$ {(recall  $\|g\|_\infty\leq G \leq G+\frac12$).}
{Thus, we may apply \eqref{eq:comparsdist2} and
%, being $H$  bounded, we may apply 
Lemma~\ref{lm:localcomp} to deduce that}
$$
E_{g,h}^{\psi_1, 1}\subseteq H_{g,h}^{\psi_1, 1}\subseteq H_{\widetilde g,h}^{\psi_2, 1}\,.
$$
In turn, by Lemma~\ref{discong} (with $g_1=g_2=\widetilde g$, $c=0$, $\psi=\psi_2$, and $\eta=\p$) we get
$$
\dist^{\po}(E_{g,h}^{\psi_1,1},\R^N\setminus F_{\widetilde g,h}^{\psi_2,1}) \geq 
\dist^{\po}(H_{\widetilde g,h}^{\psi_2,1},\R^N\setminus F_{\widetilde g,h}^{\psi_2,1})
 \ge \Delta(1-\beta L_{\po}h)\,,
$$
which is \eqref{whichis} for $k=1$.

{We can iterate this construction as long as this distance
is larger than $\Delta_0$, deducing that~\eqref{whichis} holds
as long as $(1-\beta L_{\po}h)^k\geq \theta$.}
\end{proof}
Combining Lemmas~\ref{discong} and~\ref{lem:comparmobilities}, we
{obtain} %can prove
 the following proposition.

\begin{proposition}\label{lem:comparmobilities2}
For any $\beta$,  $G$, $\Delta>0$, and $\theta\in (0,1)$, there exist positive   $\delta_ 0$, $h_0$, depending on  all the previous constants,  on the dimension $N$ and on the anisotropy $\p$, and there exists  $c_0>0$ depending  on the same quantities but $\Delta$, with the following property: 
Let $g$ be an admissible forcing term, with $\|g\|_\infty\leq G$,  and let $\psi_1$,   $\psi_2$ be two mobilities satisfying 
\eqref{eq:compani}  and \eqref{eq:psivicino} for some $0<\delta\leq \delta_0$. 
If  $E$ and $F$ are two closed sets with $\dist^{\po}(E,\R^N\setminus F)\ge\Delta$, then
   for all $0<h\le h_0$  we have
\beq\label{whichis2}
\dist^{\po}(E_{g,h}^{\psi_1,k},\R^N\setminus F_{ g,h}^{\psi_2,k}) 
 \ge \Bigl(\Delta+\frac{2c_0\de}{L_{\po}\Delta}\Bigl)(1-\beta L_{\po}h)^k- \frac{2c_0\de}{L_{\po}\Delta}
\eeq
for all $k\in \N$ such that $(1-\beta L_{\po}h)^k\geq \theta$.  
Here $L_{\po}$ denotes the Lipschitz constant of $g$ with respect to the metric $\po$.
\end{proposition}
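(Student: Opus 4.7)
The plan is to insert an intermediate set that splits the separation $\Delta$ into two equal halves, and to combine Lemma~\ref{lem:comparmobilities} (applied on the first half, to switch the mobility from $\psi_1$ to $\psi_2$ at the cost of lowering the forcing) with Lemma~\ref{discong} (applied on the second half, to restore the original forcing $g$ while keeping the mobility $\psi_2$ fixed). The additivity of the $\po$-distance across such a chain of inclusions, which follows from the fact that $\po$ is a norm, will then deliver the announced bound.

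Concretely, I would set $E':=(E)^{\po}_{\Delta/2}$, so that $E\subset E'\subset F$, $\dist^{\po}(E,\R^N\setminus E')\ge\Delta/2$, and $\dist^{\po}(E',\R^N\setminus F)\ge\Delta/2$ (the last inequality follows from the triangle inequality for $\po$). Applying Lemma~\ref{lem:comparmobilities} to the pair $(E,E')$ with separation $\Delta/2$ and with the given $\beta,G,\theta$ furnishes constants $\delta_0,h_0>0$ and $c_0>0$ (the latter being independent of $\Delta$) such that, for $\delta\le\delta_0$ and $h\le h_0$,
\[
\dist^{\po}(E_{g,h}^{\psi_1,k},\R^N\setminus (E')_{\widetilde g,h}^{\psi_2,k})\ge\frac{\Delta}{2}(1-\beta L_{\po}h)^k,
\]
where $\widetilde g:=g-2c_0\delta/\Delta$, for every $k$ with $(1-\beta L_{\po}h)^k\ge\theta$. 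Then, invoking Lemma~\ref{discong} for the pair $(E',F)$ with mobility $\psi=\psi_2$ (noting $\psi_2\le\beta\p$ by~\eqref{eq:compani}), forcings $g_1=\widetilde g$ and $g_2=g$, and $c=2c_0\delta/\Delta$, yields
\[
\dist^{\po}((E')_{\widetilde g,h}^{\psi_2,k},\R^N\setminus F_{g,h}^{\psi_2,k})\ge\Bigl(\frac{\Delta}{2}+\frac{2c_0\delta}{L_{\po}\Delta}\Bigr)(1-\beta L_{\po}h)^k-\frac{2c_0\delta}{L_{\po}\Delta}.
\]

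To conclude, I use the following additivity of the $\po$-distance: if $A\subset B$ with $\dist^{\po}(A,B^c)\ge d_1$ and $B\subset C$ with $\dist^{\po}(B,C^c)\ge d_2$, then $\dist^{\po}(A,C^c)\ge d_1+d_2$. This follows at once from the identity $W^{\po}(0,d_1)+W^{\po}(0,d_2)=W^{\po}(0,d_1+d_2)$ (a direct consequence of the triangle inequality for the norm $\po$): the hypothesis gives $a+W^{\po}(0,d_1)\subset B$ for every $a\in A$, whence $a+W^{\po}(0,d_1+d_2)=(a+W^{\po}(0,d_1))+W^{\po}(0,d_2)\subset C$. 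Applying this chain with $A=E_{g,h}^{\psi_1,k}$, $B=(E')_{\widetilde g,h}^{\psi_2,k}$, $C=F_{g,h}^{\psi_2,k}$ and summing the two displayed estimates produces exactly~\eqref{whichis2}.

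There is no real obstacle beyond careful bookkeeping: the constants $\delta_0, h_0, c_0$ of the proposition can be taken as those provided by the single application of Lemma~\ref{lem:comparmobilities} with $\Delta$ replaced by $\Delta/2$; in particular $c_0$ inherits its independence of $\Delta$ from that lemma. The validity range $(1-\beta L_{\po}h)^k\ge\theta$ stems from Step~2, while the Step~3 estimate from Lemma~\ref{discong} holds unconditionally in $k$ and thus imposes no further restriction.
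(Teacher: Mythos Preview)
Your argument is correct and essentially identical to the paper's own proof: the paper also introduces the intermediate set $H:=(E)^{\po}_{\Delta/2}$ (your $E'$), applies Lemma~\ref{lem:comparmobilities} on the pair $(E,H)$ with separation $\Delta/2$ to obtain~\eqref{dislemma1}, then Lemma~\ref{discong} on $(H,F)$ to obtain~\eqref{dislemma2}, and adds the two distances. One minor notational slip: in the paper's conventions $W^{\eta}(x,R)=\{y:\eto(y-x)\le R\}$, so the $\po$-ball you need for the additivity argument is $W^{\p}(0,r)$, not $W^{\po}(0,r)$; this does not affect the substance of your proof.
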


\begin{proof}

In what follows, $\de_0$, $h_0$, and $c_0$ are the constant provided by Lemma~\ref{lem:comparmobilities}.  With the notation introduced in $\eqref{nota}$, let $H:= (E)_{\frac{\Delta}{2}}^{\po}$, so that   
$$
\dist^{\po}(E,\R^N\setminus H)\ge\frac{\Delta}{2}, \qquad \dist^{\po}(H,\R^N\setminus F)\ge \frac{\Delta}{2}.
$$
Set $\widetilde g:= g- 2 c_0 \frac{\delta}{\Delta}$.
By \eqref{whichis}  (with    $F_{\widetilde g,h}^{\psi_2,k}$ and $\Delta$ replaced 
{with} %by
 $H_{\widetilde g,h}^{\psi_2,k}$ and $\Delta/2$, respectively)  we have 
\begin{equation}\label{dislemma1}
\dist^{\po}(E_{g,h}^{\psi_1,k},\R^N\setminus H_{\widetilde g,h}^{\psi_2,k}) 
 \ge \frac{\Delta}{2}(1-\beta L_{\po}h)^k
\end{equation}
for all $0<h\le h_0$  and
for all $k\in \N$ such that $(1-\beta L_{\po}h)^k\geq \theta$.

Moreover, by Lemma \ref{discong} (with $\eta=\p$, $g_1:=\widetilde g$, $g_2:=g$, $c:=2c_0\frac{\de}{\Delta}$,  and $E$ replaced by $H$)       
we have
\begin{equation}\label{dislemma2}
\dist^{\po}(H_{\widetilde g,h}^{\psi_2,k},\R^N\setminus F_{g,h}^{\psi_2,k}) \ge 
 \Bigl(\frac{\Delta}2+\frac{2c_0\de}{L_{\po}\Delta}\Bigl)(1-\beta L_{\po}h)^k- \frac{2c_0\de}{L_{\po}\Delta}
\end{equation}
for all $ k\in \N$.
Since

$$
\dist^{\po}(E_{g,h}^{\psi_1,k},\R^N\setminus F_{ g,h}^{\psi_2,k}) \ge \dist^{\po}(E_{g,h}^{\psi_1,k},\R^N\setminus H_{\widetilde g,h}^{\psi_2,k})  + 
\dist^{\po}(H_{\widetilde g,h}^{\psi_2,k},\R^N\setminus F_{g,h}^{\psi_2,k}) ,
$$
the conclusion follows directly by \eqref{dislemma1} and \eqref{dislemma2}.
\end{proof}
\begin{remark}[Varying anisotropies]\label{rm:proponphi}
A careful inspection of the proof of Proposition~\ref{lem:comparmobilities2} (and of Lemma~\ref{lem:comparmobilities}) together with Remark~\ref{rm:mzerophi} shows that the constants $\de_0$, $h_0$, $c_0$ can be chosen
as depending on $\p$ only through the ellipticity constants $a_1$, $a_2$ in \eqref{phia12}. This observation implies that  estimate \eqref{whichis2} holds uniformly with respect to converging sequences of anisotropies. 

  More precisely, let $\{\p_n\}$ be a  sequence of anisotropies such that $\p_n\to \p$ and let us denote, temporarily, by $E_{g,h}^{\psi, \p_n,k}$ the $k$-th minimizing movement starting from $E$, with mobility $\psi$,   forcing term $g$, time-step $h$, {\em and anisotropy $\p_n$}.
 Set $a'_1=a_1/2$, $a'_2=2a_2$,  $\beta'=2\beta$ and observe that for $n$ large $\p_n$ satisfies 
\eqref{phia12} with $a'_i$ in place of $a_i$. Moreover, if $\psi$ satisfies \eqref{eq:compani}, then it also satisfies  \eqref{eq:compani}
with $\p$, $\beta$ replaced by $\p_n$, $\beta'$, respectively, provided that $n$ is large enough. Therefore, we may find  $\delta_ 0$, $h_0$, depending on $\beta'$,  $G$, $\Delta$, $a'_1$, $a'_2$ (and the dimension $N$), and $c_0$ depending on all the same quantities but  $\Delta$, such that under the assumptions of Proposition~\ref{lem:comparmobilities2} we have for all $n$ sufficiently large
$$
\dist^{\po}(E_{g,h}^{\psi_1, \p_n,k},\R^N\setminus F_{ g,h}^{\psi_2, \p_n,k}) 
 \ge \Bigl(\Delta+\frac{2c_0\de}{L_{\po}\Delta}\Bigl)(1-\beta L_{\po}h)^k- \frac{2c_0\de}{L_{\po}\Delta}
$$
for all $k\in \N$ such that $(1-\beta L_{\po}h)^k\geq \theta$.  
 
 \end{remark}

\subsection{ Existence and uniqueness by approximation}\label{subsec:final}

In the following, given  a  uniformly continuous function $u^0$ on $\R^N$,  an   admissible forcing term $g$  and  a  mobility    $\psi$,
we denote by $u_h^\psi$  the corresponding level set minimizing movement, defined according to 
\eqref{defls}. Analogously, we use the notation $E_{\lambda, h}^\psi(t)$ (in place of $E_{\lambda, h}(t)$)  to denote the discrete-in-time evolution starting from $E^{0,\lambda}:=\{u^0\leq \lambda\}$ and with mobility $\psi$ (see Subsection~\ref{subsec:levelset}).
In the above notation we have highlighted only the dependence on $\psi$ since in the following we will establish stability properties of flat flows with respect to varying  mobilities.

We recall that  the existence theory  for level set flows (in the sense of Definition~\ref{deflevelset1}) that we have so far works only for $\p$-regular mobilities. The goal of  this section is to extend the existence theory to general mobilities. To this aim, we consider the following notion of {\em solution via approximation}: 
\begin{definition}[Level set flows via approximation]\label{deflevelset2}
 Let $\psi$, $g$, and $u^0$ be a  mobility,  an admissible forcing term, and a uniformly continuous function on $\R^N$, respectively. 
 
 We will say that a  continuous function $u^\psi:\R^N\times [0, +\infty)\to \R$ is a {\em solution via approximation} to the level set flow corresponding to \eqref{oee}, with initial datum $u^0$,  if $u^\psi(\cdot, 0)=u^0$ and if there exists  a sequence $\{\psi_n\}$ of $\p$-regular mobilities  such that $\psi_n\to\psi$  and, denoting by $u^{\psi_n}$ the unique solution to \eqref{oee} (in the sense od Definition~\ref{deflevelset1}) with mobility $\psi_n$ and initial datum $u^0$, we have $u^{\psi_n}\to u^\psi$  locally  uniformly in $\R^N\times[0,+\infty )$.
 \end{definition}
 The next theorem is the main result of this section: it shows that for any mobility $\psi$ a solution-via-approximation $u^\psi$ in the sense of the previous definition always exists; such a solution  is also unique in that it is   independent of the choice of the approximating sequence of $\p$-regular mobilities $\{\psi_n\}$ and, in fact,  coincides with the (unique) limit of the level set minimizing movement scheme $\{u^\psi_h\}$. In particular, in the case of a $\p$-regular mobility the notion of solution via approximation is consistent with that of Definition~\ref{deflevelset1}. 
\begin{theorem}\label{th:maingenmob}
Let $\psi$, $g$, and $u^0$ be as in Definition~\ref{deflevelset2}. Then, there exists a unique solution $u^\psi$ in the sense of Definition~\ref{deflevelset2} with initial datum $u^0$. Moreover,  the following holds:
\begin{itemize}
\item [(i)]  {\rm (Convergence of the level set minimizing movements scheme)}
 The solution $u^\psi$ is the locally uniform limit in $\R^N\times [0,+\infty)$, as $h\to 0^+$,  of the    level set minimizing movements  $u^\psi_h$.  
\item[(ii)] {\rm (Stability) }
Let  $\{\psi_n\}_{n\in\N}$ be a sequence of mobilities such that   $\psi_n\to  \psi$.  Then $u^{\psi_n}$ converge to $u^{\psi}$   uniformly  in $\R^N\times[0,T]$  for all $T>0$ as $n\to\infty$. 
 \end{itemize}
\end{theorem}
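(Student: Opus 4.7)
The strategy is to build an explicit approximating sequence of $\p$-regular mobilities, use Proposition~\ref{lem:comparmobilities2} to show the associated level set flows form a Cauchy sequence independent of the choice of approximation, and then transfer the resulting convergence to the minimizing movements scheme $u_h^{\psi}$ and to general sequences of mobilities.

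First, I would set $\psi_n := \psi + \tfrac{1}{n}\p$. By the characterization immediately following Definition~\ref{def:phiregular}, each $\psi_n$ is $\p$-regular with constant $\e_0 = 1/n$. Moreover $\psi_n \searrow \psi$ uniformly on compacts, and for $n\le m$ one has $\psi_m \le \psi_n \le (1+\delta_{n,m})\psi_m$, with $\delta_{n,m} \le C/n$ for a constant $C$ depending only on the ellipticity constants of $\p$ and $\psi$. Theorem~\ref{th:phiregularlevelset} produces the unique level set flows $u^{\psi_n}$ and also the approximations $u_h^{\psi_n} \to u^{\psi_n}$ as $h\to 0^+$.

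The key step is the Cauchy property of $\{u^{\psi_n}\}$ in $L^\infty(\R^N \times [0,T])$ for every $T>0$. I would fix $T>0$, $\e>0$, and $\lambda \in \R$; by uniform continuity of $u^0$ one has $\Delta := \dist^{\po}(\{u^0 \le \lambda\}, \{u^0 \ge \lambda+\e\}) > 0$ independent of $\lambda$. Choose $\theta \in (0,1)$ so that $(1-\beta L_{\po} h)^k \ge \theta$ for every $k$ with $kh \le T$ (satisfied for all $h$ small enough). Apply Proposition~\ref{lem:comparmobilities2} with $\psi_1 = \psi_n$, $\psi_2 = \psi_m$ (where $n\le m$), $E = \{u^0 \le \lambda\}$, $F = \{u^0 < \lambda+\e\}$: for $n$ large enough, $\delta_{n,m} \le \delta_0$ and the error term $2c_0\delta_{n,m}/(L_{\po}\Delta)$ is less than $\Delta/4$, yielding
\[
\dist^{\po}(E_{g,h}^{\psi_n,k}, \R^N\setminus F_{g,h}^{\psi_m,k}) \ge \tfrac{\Delta}{4}
\qquad \text{for all } kh\le T,\ h\le h_0.
\]
Via \eqref{eq:bc}, this translates into $\{u_h^{\psi_n}(\cdot,t)\le \lambda\} \subseteq \{u_h^{\psi_m}(\cdot,t) < \lambda+\e\}$ for $t\in [0,T]$. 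Varying $\lambda\in\R$ gives the pointwise inequality $u_h^{\psi_m} \le u_h^{\psi_n} + \e$ on $\R^N \times [0,T]$; sending $h\to 0^+$ (using Theorem~\ref{th:phiregularlevelset}(ii)) and swapping the roles of $n$ and $m$ by a symmetric argument yields $\|u^{\psi_n}-u^{\psi_m}\|_{L^\infty(\R^N \times [0,T])} \le \e$ for $n,m$ large. Hence $u^{\psi_n}$ converges uniformly on $\R^N \times [0,T]$ to a limit $u^{\psi}$, which is uniformly continuous since the $u^{\psi_n}$ share equicontinuity moduli (from Subsection~\ref{subsec:levelset} together with the uniform comparability of $\pso_n$). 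If $\{\tilde\psi_n\}$ is any other sequence of $\p$-regular mobilities converging to $\psi$, the same argument applied to the combined sequence $\psi_1,\tilde\psi_1,\psi_2,\tilde\psi_2,\dots$ shows the two limits coincide, proving uniqueness.

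For (i), the convergence $u_h^{\psi} \to u^{\psi}$ follows by a triangle inequality: fix $n$ and apply Proposition~\ref{lem:comparmobilities2} at the discrete level with $\psi_1 = \psi_n$, $\psi_2 = \psi$ to deduce $\|u_h^{\psi_n} - u_h^{\psi}\|_{\infty} \le \e$ on $\R^N \times [0,T]$ uniformly in $h \le h_0$, for $n$ large; combine with $u_h^{\psi_n}\to u^{\psi_n}$ (Theorem~\ref{th:phiregularlevelset}(ii)) and $u^{\psi_n}\to u^\psi$ from the previous step. For (ii), given an arbitrary $\psi_n \to \psi$, approximate each $\psi_n$ by $\psi_n^{(k)} := \psi_n + \tfrac{1}{k}\p$, noting that the constants produced by Proposition~\ref{lem:comparmobilities2} are uniform in $n$ (they depend on $\psi$ only through the ellipticity constants, which are uniformly controlled); a diagonal extraction then yields $u^{\psi_n}\to u^{\psi}$ uniformly on $\R^N \times [0,T]$. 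The main technical point is the bookkeeping in the Cauchy step: one must choose $\theta$ depending on $T$ but not on $\lambda$ or $n,m$, and verify that the error $2c_0\delta_{n,m}/(L_{\po}\Delta)$ produced by Proposition~\ref{lem:comparmobilities2} is dominated by $\Delta$ uniformly in $\lambda$, which is guaranteed by the uniform continuity of $u^0$.
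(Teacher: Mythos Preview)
Your overall strategy---using Proposition~\ref{lem:comparmobilities2} at the discrete level to compare level-set minimizing movements for nearby mobilities and then passing to the limit---matches the paper's, and your organization is essentially a reordering of its three steps. However, two of your moves do not go through as written, and both failures have the same cause: Proposition~\ref{lem:comparmobilities2} is \emph{asymmetric} in the mobilities.

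The ``swapping the roles of $n$ and $m$'' step is invalid. Condition~\eqref{eq:psivicino} requires $\psi_2\le\psi_1$, and the conclusion places $E^{\psi_1}$ inside $F^{\psi_2}$, not the reverse. With $\psi_1=\psi_n$, $\psi_2=\psi_m$ (for $n\le m$, so that $\psi_m\le\psi_n$) you correctly obtain $u_h^{\psi_m}\le u_h^{\psi_n}+\e$, but literally swapping $n\leftrightarrow m$ would require $\psi_n\le\psi_m$, which is false; and the one-sided bound alone does not yield a Cauchy sequence. The paper gets the reverse inequality differently: it repeats the same argument with the initial datum $-u^0$ (equivalently, working with superlevel sets/complements), keeping the ordered pair $(\psi_1,\psi_2)$ fixed; this produces $-u_h^{\psi_m}\le -u_h^{\psi_n}+\e$, hence the two-sided estimate. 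For the same reason, your ``combined sequence'' argument for uniqueness and the ``diagonal extraction'' for~(ii) are incomplete: two mobilities that are both close to $\psi$ need not satisfy any ordering, so Proposition~\ref{lem:comparmobilities2} cannot compare them directly. The paper's remedy (its Step~3) is to insert an auxiliary mobility $\widehat\psi_n:=\lambda_n\psi_n$ with $\lambda_n\nearrow 1$ chosen so that $\widehat\psi_n\le\psi$; then both $\psi$ and $\psi_n$ dominate $\widehat\psi_n$ and satisfy~\eqref{eq:psivicino} with $\psi_2=\widehat\psi_n$, giving $\|u_h^{\psi}-u_h^{\widehat\psi_n}\|_\infty\le\e$ and $\|u_h^{\psi_n}-u_h^{\widehat\psi_n}\|_\infty\le\e$, hence $\|u_h^{\psi}-u_h^{\psi_n}\|_\infty\le 2\e$ uniformly in small $h$. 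Your plan can be repaired with exactly these two devices, but they are genuine missing ingredients, not bookkeeping.
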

\begin{proof}
The strategy is the following: We first show that for any $\psi$ the minimizing movements $u^\psi_h$ converge to a unique function $u^\psi$, as $h\to 0^+$. Then we establish the stability property (ii), which shows, in particular, that $u^\psi$ is a solution in the sense of Definition~\ref{deflevelset2}. We split the proof of  theorem into three steps.\\
{\it Step 1.} We claim that for every $\e$, $\beta$, $T>0$ there exist positive $\delta_0$, $h_0>0$ 
(depending also on $g$, $u^0$, $\p$, and the dimension $N$) such that if $\psi_1$ and $\psi_2$ are two mobilities satisfying \eqref{eq:compani}  and \eqref{eq:psivicino} for some $0<\delta\leq \delta_0$, then 
\beq\label{step1eq}
\|u_h^{\psi_2}-u_h^{\psi_1}\|_{L^{\infty}(\R^N\times[0,T])}\leq \e \qquad\text{for all $0<h\leq h_0$.}
\eeq
To this aim, let $\omega$ be an increasing modulus of continuity for  $u^0$ with respect to $\po$ and recall that for any $\lambda\in \R$
$$
\dist^{\po}(E^{0,\lambda}, \R^N\setminus E^{0,\lambda+\e})\geq \omega^{-1}(\e)\,.
$$
Set 
$$
\theta(T):=\mathrm{(2e)}^{-\beta L_{\po} T}\,,
$$
where $L_{\po}$ denotes the spatial Lipschitz constant of the forcing term $g$ with respect to $\po$, and choose $\bar h>0$ so small that 
$(1-\beta L_{\po} h)^{-\frac1{\beta L_{\po} h}}\leq 2 \mathrm{e}$ for all $h\in (0, \bar h)$. Let $\de_0$, $h_0$, $c_0$ be the positive constants provided by Proposition~\ref{lem:comparmobilities2} and corresponding to the given $\beta$, $G:=\|g\|_\infty$, $\Delta:=\omega^{-1}(\e)$,
and $\theta:= \theta(T)$. Clearly we may assume $h_0\leq \bar h$.
 
 By Proposition~\ref{lem:comparmobilities2},  if $\psi_1$ and $\psi_2$ satisfy \eqref{eq:compani}  and \eqref{eq:psivicino} for some $0<\delta\leq \delta_0$, then  for $h\in (0, h_0]$ we   have 
\begin{align}
\dist^{\po}\bigr(E_{\lambda, h}^{\psi_1} , \R^N\setminus E_{\lambda+\e, h}^{ \psi_2}(t)\bigl) & \geq 
\biggl(\omega^{-1}(\e)+\frac{2c_0\de}{L_{\po}\omega^{-1}(\e)}\biggl)(1-\beta L_{\po}h)^{[\frac{t}h]}- \frac{2c_0\de}{L_{\po}\omega^{-1}(\e)}
  \nonumber \\
&\geq \biggl(\omega^{-1}(\e)+\frac{2c_0\de}{L_{\po}\omega^{-1}(\e)}\biggl)(2\mathrm{e})^{-\beta L_{\po}t}- \frac{2c_0\de}{L_{\po}\omega^{-1}(\e)}, \label{cauchy0}
\end{align}
for all $t\in (0, T]$.  Clearly, by choosing  $\delta_0$ smaller if needed,  we may assume that  right-hand side of \eqref{cauchy0} is positive for all $t\in (0, T]$. Recalling \eqref{eq:bc}, we conclude that
$$
\{u^{\psi_1}_h(\cdot, t)<\lambda\}\subseteq E_{\lambda,h}^{\psi_1}(t)\subseteq E_{\lambda+\e,h}^{ \psi_2}(t)\subseteq\{u^{ \psi_2}_h(\cdot, t)\leq\lambda+\e\}
$$ 
for all  $\lambda\in \R$, $h\in (0, h_0]$, and $t\in [0, T]$. This in turn implies that 
$$
u_h^{ \psi_2}(\cdot, t)\leq u_h^{\psi_1}(\cdot, t)+\e \qquad\text{for all $h\in (0, h_0]$ and $t\in [0, T]$.}
$$ 
We may now repeat the same argument by considering $-u^0$ as initial function, instead of $u^0$. This leads to the inequality
$$
-u_h^{ \psi_2}(\cdot, t)\leq- u_h^{\psi_1}(\cdot, t)+\e \qquad\text{for all $h\in (0, h_0]$ and $t\in [0, T]$,}
$$ 
which together with the previous one proves \eqref{step1eq}.

\noindent {\it Step 2.}  Here  we prove that  $\{u^\psi_h\}_h$ satisfies the Cauchy condition in $L^\infty(K\times[0,T])$ for all compact sets $K\subset\R^N$ and for all $T>0$. 

To this {purpose, let } $T$, $\e>0$,  let $\beta>0$ satisfy $\psi\leq \frac{\beta}2\p$,  and  let  $\delta_0$, $h_0$ be the corresponding constants provided by Step 1. 
Clearly we may choose a $\p$-regular mobility (see Definition~\ref{def:phiregular}) $\widehat \psi$ such that $\psi_1:=\psi$ and $\psi_2:=\widehat \psi$ satisfy \eqref{eq:compani}  and \eqref{eq:psivicino}  for some $0<\delta\leq \delta_0$.
Pick any sequence $h_n\searrow  0$. Then,  we may write
\begin{multline*}
\|u^{\psi}_{h_n}-u^{\psi}_{h_m}\|_{L^{\infty}(K\times [0,T])}\leq \|u^{\psi}_{h_n}-u^{\widehat \psi}_{h_n}\|_{L^{\infty}(K\times [0,T])}\\+\|u^{\widehat\psi}_{h_n}-u^{\widehat\psi}_{h_m}\|_{L^{\infty}(K\times [0,T])}+\|u^{\widehat \psi}_{h_m}-u^{\psi}_{h_m}\|_{L^{\infty}(K\times [0,T])}\,.
\end{multline*}
The first and the third term on the right-hand side  of the above inequality are both less than or equal to $\e$ thanks to \eqref{step1eq}, provided that $h_n$, $h_m\leq h_0$.  Recall now that by 
Theorem~\ref{th:phiregularlevelset}-(ii) the family $\bigl\{u^{\widehat\psi}_h\bigr\}_h$ satisfies the Cauchy condition in $L^\infty(K\times[0,T])$; thus also the middle term on the right-hand side  of the above inequality is smaller than 
$\e$ for $n$ and $m$ large enough. We conclude that $\|u^{\psi}_{h_n}-u^{\psi}_{h_m}\|_{L^{\infty}(K\times [0,T])}\leq 3\e$ for $n$, $m$ sufficiently large. This establishes the claim and shows that $u^\psi_h$ converges locally uniformly in $\R^N\times [0, +\infty)$. We denote its limit by $u^\psi$. 

\noindent {\it Step 3.} Let $\{\psi_n\}_n$ be a sequence of mobilities such that $\psi_n\to \psi$.
First of all, observe that we may find $\lambda_n\to  1^-$such that $\widehat\psi_n:=\lambda_n\psi_n\leq\psi$ for all $n$.

Fix $\e>0$, let $\beta>0$ be as in Step 2, and  let $\delta_0$, $h_0$ be the corresponding constants provided by Step 1.  Note that for any $0<\de\leq \de_0$  we have
\beq\label{txstep1}
\widehat\psi_n\leq \psi\leq (1+\delta)\widehat \psi_n\,, \qquad \widehat\psi_n\leq \psi_n\leq (1+\delta)\widehat \psi_n \quad\text{ and  }\quad 
\psi, \psi_n, \widehat\psi_n\leq \beta \p\,,
\eeq
provided $n$ large enough.
Thus, thanks to Step 1, for all such $n$'s and for all $h\leq h_0$ we have
$$
\|u^{\psi}_{h}-u^{\psi_n}_{h}\|_{L^{\infty}(\R^N\times [0,T])}\leq \|u^{\psi}_{h}-u^{\widehat \psi_n}_{h}\|_{L^{\infty}(\R^N\times [0,T])}+\|u^{\widehat \psi_n}_{h}-u^{\psi_n}_{h}\|_{L^{\infty}(\R^N\times [0,T])}\leq 2\e\,.
$$
Thanks to Step 2 we may send $h\to 0$ in the above inequality to infer 
that $\|u^{\psi}-u^{\psi_n}\|_{L^{\infty}(\R^N\times [0,T])}\leq 2\e$ for all $n$ sufficiently large. This concludes the proof of the theorem.
\end{proof}

In the next theorem   we collect the main properties of the level set solutions introduced in Definition~\ref{deflevelset2}.

To this aim,  we will say that a uniformly continuous initial function $u^0$ is {\em well-prepared} at $\lambda\in\R$ if the following two conditions hold:
\begin{itemize}
\item[(a)] If $H\subset \R^N$ is a closed set such that $\dist(H, \{u_0\geq \lambda\})>0$, then there exists $\lambda'<\lambda$ such that $H\subseteq
\{u_0< \lambda'\}$;
\item[(b)] If $A\subset \R^N$ is an open  set such that  $\dist(\{u_0\leq \lambda\}, \R^N\setminus A)>0$,  then there exists $\lambda'>\lambda$ such that 
$\{u_0\leq \lambda'\}\subset A$.
\end{itemize}
\begin{remark}
Note that the above assumption of well-preparedness is automatically satisfied if the set $\{u_0\leq \lambda\}$ is bounded.
\end{remark}
\begin{theorem}[Properties of the level set flow]\label{th:proplevelset}
Let  $u^\psi$ be  a solution in the sense  Definition~\ref{deflevelset2}, with initial datum $u^0$. The following properties hold true:

  {\rm (i) (Non-fattening level sets and unique flat flows)}   There exists a countable set $N\subset \R$ such that  for all  $\lambda \not \in N$
\beq\label{eq:nonfattening2}
\begin{array}{rcl}  
\{(x,t):\, u^\psi(x, t) < \lambda\} &=& \mathrm{Int\,}(\{(x,t)\,:\, u^\psi(x, t) \le \lambda\})\,,\vspace{5pt}\\ 
\overline {\{(x,t)\,:\, u^\psi(x, t) < \lambda\}} &=& \{(x,t)\,:\, u^\psi(x, t) \le \lambda\}
\end{array}
\eeq
and 
the flat flow starting from $E^{0,\lambda}$ is unique. More precisley,  we have
$$
\qquad E^\psi_{\lambda, h}\stackrel{\mathcal K}{\longrightarrow} \{(x,t)\,:\, u^\psi(x, t) \le \lambda\}\text{ and }
{ (\mathrm{Int\, }E^\psi_{\lambda, h})}^c\stackrel{\mathcal K}{\longrightarrow} \{(x,t)\,:\, u^\psi(x, t) \geq \lambda\}
$$
as $h\to 0^+$.

 {\rm (ii) (Distributional formulation when $\psi$ is $\p$-regular)} If $\psi$ is $\p$-regular, then $u^\psi$ coincides with the  distributional solution in the sense of Definition~\ref{deflevelset1}.

 {\rm (iii) (Comparison)} Assume that $u^0\leq v^0$ and denote the corresponding level set flows by $u^\psi$ and $v^\psi$, respectively. Then $u^\psi\leq v^\psi$.

 {\rm (iv) (Geometricity)} Let $f:\R\to \R$ be increasing and continuous. Then $u^\psi$ is a solution with initial datum $u^0$ if and only if $f\circ u^{\psi}$ is a solution with initial datum $f\circ u^0$.

 {\rm (v) (Independence of the initial level set function)} Assume that $u^0$ and $v^0$ are well-prepared at $\lambda$. If
$\{u^0<\lambda\}=\{v^0<\lambda\}$, then $\{u^\psi(\cdot, t)<\lambda\}=\{v^\psi(\cdot, t)<\lambda\}$ for all $t>0$. Analogously, if 
$\{u^0\leq \lambda\}=\{v^0\leq \lambda\}$, then  $\{u^\psi(\cdot, t)\leq \lambda\}=\{v^\psi(\cdot, t)\leq \lambda\}$ for all $t>0$.
\end{theorem}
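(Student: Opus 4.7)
The plan is to establish (iii) first directly from Definition~\ref{deflevelset2} and the discrete comparison, then derive (ii), (iv), (i), and (v) in that order.

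For (iii), suppose $u^0\leq v^0$ and choose $\p$-regular approximations $\psi_n\to\psi$. Then $\{u^0\leq\lambda\}\supseteq\{v^0\leq\lambda\}$ for every $\lambda\in\R$, so the discrete comparison principle (Remark~\ref{rm:dcp}) yields that the discrete evolution starting from $\{v^0\leq\lambda\}$ is contained in the one starting from $\{u^0\leq\lambda\}$ at every time; by \eqref{defls} this gives $u^{\psi_n}_h\leq v^{\psi_n}_h$. Passing to the limit $h\to 0$ via Theorem~\ref{th:maingenmob}(i), and then $n\to\infty$ via Theorem~\ref{th:maingenmob}(ii), yields $u^\psi\leq v^\psi$. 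For (ii), the constant sequence $\psi_n\equiv\psi$ is an admissible approximation in Definition~\ref{deflevelset2} when $\psi$ is $\p$-regular, so $u^\psi$ must coincide with the Definition~\ref{deflevelset1} solution provided by Theorem~\ref{th:phiregularlevelset}. For (iv), a direct inspection of \eqref{defls} shows that the discrete level set scheme commutes with continuous increasing transformations, since $\{f\circ u^0\leq\lambda\}=\{u^0\leq f^{-1}(\lambda)\}$ when $\lambda$ lies in the range of $f$ (and is $\emptyset$ or $\R^N$ otherwise), so that $(f\circ u^0)^{\psi_n}_h=f\circ u^{\psi_n}_h$; taking $h\to 0$ and $n\to\infty$ gives the claim.

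For (i), I construct $N$ exactly as in Step 2 of the proof of Theorem~\ref{th:phiregularlevelset}: the monotone family $K_\lambda:=\{(x,t)\,:\,u^\psi(x,t)\leq\lambda\}$ depends Kuratowski-continuously on $\lambda$ outside a countable set $N_1\subset\R$ (by monotonicity and equicontinuity of the distances $\dist(\cdot,K_\lambda)$), and likewise for $\{u^\psi\geq\lambda\}$ outside $N_2$; then \eqref{eq:nonfattening2} holds for $\lambda\notin N:=N_1\cup N_2$. For the uniqueness of flat flow, fix such a $\lambda$ and let $E$ be any Kuratowski cluster point of $\{E^\psi_{\lambda,h}\}$. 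By \eqref{eq:bc} and Theorem~\ref{th:maingenmob}(i),
\[
\{u^\psi_h(\cdot,t)<\lambda\}\ \subseteq\ E^\psi_{\lambda,h}(t)\ \subseteq\ \{u^\psi_h(\cdot,t)\leq\lambda\},
\]
and the local uniform convergence $u^\psi_h\to u^\psi$ allows one to pass to the Kuratowski limit, giving $\{u^\psi<\lambda\}\subseteq E\subseteq\{u^\psi\leq\lambda\}$; by \eqref{eq:nonfattening2} both bounds coincide, forcing $E=\{u^\psi\leq\lambda\}$ and hence sequential convergence of the whole family. The analogous argument applied to the complements of the interiors yields the second Kuratowski convergence.

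For (v), the plan is a sandwiching argument via well-preparedness. Assume $\{u^0<\lambda\}=\{v^0<\lambda\}$ with both functions well-prepared at $\lambda$; then $\{u^0\geq\lambda\}=\{v^0\geq\lambda\}$. For every $\lambda_1<\lambda$, the uniform continuity of $v^0$ gives $\dist^{\pso}(\{v^0\leq\lambda_1\},\{v^0\geq\lambda\})>0$, so well-preparedness (a) for $u^0$ produces $\lambda_2<\lambda$ with $\{v^0\leq\lambda_1\}\subseteq\{u^0\leq\lambda_2\}$, and symmetrically. Plugging these nested inclusions into comparison (iii), applied via suitable truncated/modified initial data, one obtains $\{v^\psi(\cdot,t)\leq\lambda_1\}\subseteq\{u^\psi(\cdot,t)\leq\lambda_2\}$ for all $t>0$; sending $\lambda_1,\lambda_2\nearrow\lambda$ along non-fattening levels supplied by (i) yields $\{v^\psi(\cdot,t)<\lambda\}\subseteq\{u^\psi(\cdot,t)<\lambda\}$, and the reverse by symmetry. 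The corresponding statement for closed sublevels uses well-preparedness (b) in the same way. I expect property (v) to be the main obstacle, since one has to carefully coordinate the two versions of well-preparedness, the strict versus closed sublevel formulations, and the exceptional set $N$ from (i), in order to close the sandwich rigorously.
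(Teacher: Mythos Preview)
Your arguments for parts (i)--(iv) are correct and essentially match the paper's. For (iii) you use the discrete comparison (Remark~\ref{rm:dcp}) and pass to the limit, while the paper invokes Theorem~\ref{th:lscomp} at the $\psi_n$-level; both routes are equivalent. For (ii) and (iv) you are more explicit than the paper, which simply says ``obvious'' and ``follows by approximation'', but the content is the same. Your construction of the exceptional set $N$ in (i) and the sandwich argument for the flat-flow convergence reproduce Step~2 of the proof of Theorem~\ref{th:phiregularlevelset}, exactly as the paper indicates.

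The gap is in (v). You correctly set up the sandwich $\{v^0\le\lambda_1\}\subseteq\{u^0\le\lambda_2\}$ via well-preparedness, but the step ``plugging these nested inclusions into comparison (iii), applied via suitable truncated/modified initial data'' does not work as stated. Property (iii) compares \emph{functions}, not individual sublevel sets, and there is no obvious way to manufacture uniformly continuous initial data $\tilde u^0\le \tilde v^0$ whose sublevel sets at the right levels reproduce $\{u^0\le\lambda_2\}$ and $\{v^0\le\lambda_1\}$ without already knowing what you are trying to prove. The paper avoids this entirely: it goes back to the approximating $\p$-regular mobilities $\psi_n$, where Theorem~\ref{th:phiregularlevelset}(iii) says $\{u^{\psi_n}\le\lambda_1\}$ is a superflow and $\{v^{\psi_n}<\lambda_2\}$ is a subflow in the distributional sense, and then applies the set-level comparison Theorem~\ref{th:compar} directly (which needs only positive initial distance, supplied by uniform continuity). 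Passing to the limit $n\to\infty$ gives the desired inclusion. An equivalent repair of your argument would be to bypass (iii) and instead invoke the discrete set comparison (Remark~\ref{rm:monotone}) on the individual sublevel evolutions $E^{\psi}_{\lambda_1,h}$ and $E^{\psi}_{\lambda_2,h}$, then use part (i) at non-fattening levels to identify the Kuratowski limits; but this is really the same mechanism as the paper's, phrased at the discrete rather than the $\psi_n$ level.
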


\begin{proof}
Property (ii) is obvious. Properties (i)   can be proven arguing as in the proof of Theorem~\ref{th:phiregularlevelset}. Property (iii) follows at once from  { the stability property of flat flows with respect to approximation with smooth mobilities, and from } Theorem~\ref{th:lscomp}. Also property (iv) follows by approximation, since clearly  it is satisfied 
when the mobility $\psi$ is $\p$-regular. 
Let us now prove property (v): Pick $\lambda_1<\lambda$ and note that by uniform continuity and by assumption we have
$$
\dist(\{u^0\leq \lambda_1\},\{v^0\geq \lambda\} )=\dist(\{u^0\leq \lambda_1\},\{u^0\geq \lambda\} )>0
$$
and thus there exists $\lambda_2\in (\lambda_1, \lambda)$ such that $\{u^0\leq \lambda_1\}\subset \{v^0< \lambda_2\}$.
Let now $\{\psi_n\}$ be an approximating sequence of $\p$-regular mobilities and recall that by 
Theorem~\ref{th:phiregularlevelset}-(iii) the set $\{(x,t)\,:\, u^{\psi_n}(x,t)\leq \lambda_1\}$ is a superflow with initial datum 
$\{u^0\leq \lambda_1\}$, while $\{(x,t)\,:\, v^{\psi_n}(x,t)< \lambda_2\}$ is a subflow with initial datum $\{v^0< \lambda_2\}$.
Thus, from Theorem~\ref{th:compar} we deduce that 
$\{(x,t)\,:\, u^{\psi_n}(x,t)< \lambda_1\}\subset  \{(x,t)\,:\, v^{\psi_n}(x,t)< \lambda_2\}$  for all $n$.
In turn, from the latter inclusion  we easily deduce  that 
$$
\{(x,t)\,:\, u^{\psi}(x,t)< \lambda_1\}\subset  \{(x,t)\,:\, v^{\psi}(x,t) \leq \lambda_2\}\subset \{(x,t)\,:\, v^{\psi}(x,t) < \lambda\}\,.
$$
By the arbitrariness of $\lambda_1$, we conclude that $\{(x,t)\,:\, u^{\psi}(x,t)< \lambda\}\subseteq \{(x,t)\,:\, v^{\psi} (x,t)< \lambda\}$.
Symmetrically, also the opposite inclusion holds.  The equality between the closed sub-level sets can be proven analogously. 
\end{proof}

\begin{remark}[Generalized motion] We observe that property (v) above allows one to consider $\Gamma_t:=\{u^\psi(\cdot, t)=0\}$ as defining a  {\em generalized motion} starting from $\Gamma_0:=\{u^0=0\}$. 
\end{remark}

\begin{remark}[Star-shaped sets, convex sets and graphs] A natural question is
to understand under which circumstances fattening does not occur. To the best of
our knowledge,  no general results are available, even for the classical mean
curvature flow. On the other hand, it is well-known  \cite[Sec.~9]{Soner93} that for the motion without forcing,
strictly star-shaped sets do not develop fattening so that, in particular, their evolution is unique. % as long as they remain strictly star-shaped.
The proof of this fact, given for instance
in~\cite{Soner93} for the mean curvature flow, works also for solutions in the sense of Definition~\ref{Defsol} when the mobility  $\psi$ is $\p$-regular, and in turn, by approximation, also for the {\em generalized motion} associated to level set solutions in the sense of Definition~\ref{deflevelset2}, when $\psi$ is general. Uniqueness also holds for motions with a time-dependent forcing
$g(t)$ \cite[Theorem~5]{BeCaChNo-volpres} as long as the set remains strictly
star-shaped.
This remark obviously applies to initial convex sets, which, in
addition, remain convex for all times,
as was shown in~\cite{BelCaChaNo,CaCha,BeCaChNo-volpres} with a spatially constant forcing term.\footnote{Convexity is preserved also with a spatially convex forcing term but uniqueness is not known in this case.}
The case of unbounded initial
convex sets was not considered in these references but can be easily addressed by approximation (and uniqueness still holds with the same proof).

In the same way, if the initial set $E_0=\{x_N\le v^0(x_1,\dots,x_{N-1})\}$ is
 the subgraph of a uniformly continuous functions $v^0$, and the forcing term
does not depend on $x_N$, then
one can show that fattening does not develop and $E(t)$ is still the subgraph of a uniformly continuous function for all $t>0$, as in the classical case~\cite{EckerHuisken,EvansSpruckIII} (see also~\cite{GigaGiga98} for the 2D crystalline case).
%In particular
%the graph of an initial convex function evolves in a unique way and
%remains convex for all times.
\end{remark}

Eventually, we extend the stability property in Theorem~\ref{th:maingenmob}-(ii) to varying anisotropies. 
\begin{proposition}\label{remstab}
Let $\psi$, $g$, and $u^0$ be as in Theorem~\ref{th:maingenmob}, 
let  $\{\psi_n\}$ and $\{\p_n\}$ be a sequences of mobilities 
and anisotropies, respectively, such that
$\psi_n\to  \psi$ and $\p_n\to  \p$ as $n\to +\infty$. Denote by  $u^{\psi_n, \p_n}$ 
the level set solution  in the sense of 
Definition~\ref{deflevelset2} with $\psi$, $\p$ replaced by $\psi_n$, $\p_n$, respectively, and with initial datum $u^0$.  
Then, $u^{\psi_n, \p_n}\to u^{\psi, \p}$ locally uniformly  in $\R^N\times[0,+\infty)$ as $n\to\infty$.
\end{proposition}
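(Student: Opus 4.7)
The plan is to adapt the three-step argument of Step~3 in the proof of Theorem~\ref{th:maingenmob}, propagating uniformity with respect to the anisotropy by invoking Remark~\ref{rm:proponphi} at the discrete level and Remark~\ref{rm:stability2} at the limit level.

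Fix $\e,T>0$. Using the characterization noted after Definition~\ref{def:phiregular}, I would first approximate $\psi$ by a $\p$-regular mobility $\widehat\psi=\psi_0+\e_0\p$ (with $\psi_0$ convex and $\e_0>0$) satisfying $\widehat\psi\le\psi\le(1+\delta)\widehat\psi$ for a small parameter $\delta$ to be chosen according to the thresholds of Proposition~\ref{lem:comparmobilities2}. I would then set $\widehat\psi^{(n)}:=\lambda_n(\psi_0+\e_0\p_n)$, with $\lambda_n\to 1^-$ chosen so that $\widehat\psi^{(n)}\le\psi_n$; by construction $\widehat\psi^{(n)}$ is $\p_n$-regular with uniform interior $\p_n$-Wulff shape radius $\lambda_n\e_0\ge\e_0/2$ for $n$ large, and $\widehat\psi^{(n)}\to\widehat\psi$ as $n\to\infty$, so that also $\psi_n\le(1+2\delta)\widehat\psi^{(n)}$ for $n$ large.

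Next, I would decompose
\begin{equation*}
\|u^{\psi_n,\p_n}-u^{\psi,\p}\|_{L^\infty(\R^N\times[0,T])} \;\le\; A_n + B_n + C_n,
\end{equation*}
with $A_n:=\|u^{\psi_n,\p_n}-u^{\widehat\psi^{(n)},\p_n}\|_\infty$, $B_n:=\|u^{\widehat\psi^{(n)},\p_n}-u^{\widehat\psi,\p}\|_\infty$, and $C_n:=\|u^{\widehat\psi,\p}-u^{\psi,\p}\|_\infty$. The term $C_n$ is $\le 2\e$ uniformly in $n$ by Step~3 of the proof of Theorem~\ref{th:maingenmob}, while $B_n\to 0$ as $n\to\infty$ by Remark~\ref{rm:stability2} applied to $\widehat\psi^{(n)}\to\widehat\psi$ and $\p_n\to\p$, since the $\p_n$-regularity of $\widehat\psi^{(n)}$ is uniform in $n$. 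For the main term $A_n$, which involves different mobilities in the \emph{same} anisotropy $\p_n$, I would pass to the discrete flows, inserting $u^{\psi_n,\p_n}_h$ and $u^{\widehat\psi^{(n)},\p_n}_h$: the two ``boundary'' differences vanish as $h\to 0^+$ at fixed $n$ by Theorem~\ref{th:maingenmob}-(i), while the middle discrete term is bounded by $2\e$ for all $n$ large and all $h\le h_0$ by exactly the argument of Step~1 of the proof of Theorem~\ref{th:maingenmob}, now applied with anisotropy $\p_n$. The crucial point is that the thresholds $\delta_0,h_0,c_0$ provided by Proposition~\ref{lem:comparmobilities2} are uniform in $n$ by Remark~\ref{rm:proponphi}. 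Sending first $h\to 0^+$ and then $n\to\infty$ yields $\limsup_n A_n\le 2\e$, and combining the three bounds gives the conclusion by arbitrariness of $\e$.

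The main obstacle is ensuring uniformity in $n$ of all quantitative estimates on the discrete scheme, in particular of the Step~1 stability bound. This is exactly the content of Remark~\ref{rm:proponphi}, whose justification amounts to tracking through the proofs of Lemmas~\ref{density},~\ref{lm:crucial}, and the covering Lemma~\ref{lemmarico} that the constants depend on $\p$ only through the ellipticity parameters $a_1,a_2$ of~\eqref{phia12}, together with the continuous dependence on $\p$ of $|W^\p(0,1)|$, $\H^{N-1}(\partial W^\p(0,1))$ and of the constant $\theta$ in~\eqref{qq1bis}. A secondary technicality is the construction of $\widehat\psi^{(n)}$ as a uniformly $\p_n$-regular approximant of $\widehat\psi$, which is handled by the explicit Minkowski-sum formula above.
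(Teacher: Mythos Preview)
Your proposal is correct and follows essentially the same route as the paper: a three-term decomposition through an intermediate mobility that is uniformly $\p_n$-regular, handling the middle term by Remark~\ref{rm:stability2}, the outer term by Theorem~\ref{th:maingenmob}-(ii), and the first term by the Step~1 estimate of Theorem~\ref{th:maingenmob} made uniform in $n$ via Remark~\ref{rm:proponphi}. The only inessential differences are (a) the concrete choice of the intermediate mobility---the paper takes the direct perturbation $\psi_n^\delta:=\psi+\delta\p_n$, which is slightly simpler than your $\widehat\psi^{(n)}=\lambda_n(\psi_0+\e_0\p_n)$---and (b) the order of operations: the paper first passes $h\to 0$ in the Step~1 estimate (obtaining the uniform bound directly at the continuous level) and only then decomposes, whereas you decompose first and send $h\to 0$ inside the $A_n$ term; both are equivalent.
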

\begin{proof} From Steps 1 and 3 in the proof of Theorem~\ref{th:maingenmob} combined with Remark~\ref{rm:proponphi},
we have that for every $\e$, $\beta$, $T>0$ there exist positive $\delta_0$, $h_0>0$ 
(depending also on $g$, $u^0$, $\p$, the dimension $N$, but not on $n$) such that if $\psi_1$ and $\psi_2$ are two mobilities satisfying \eqref{eq:compani}  and 
$\max_{|\xi|=1}|\psi_1(\xi)-\psi_2(\xi)|\leq \de_0$,
 then for all $n$ large enough
$\|u_h^{\psi_2, \p_n}-u_h^{\psi_1,\p_n}\|_{L^{\infty}(\R^N\times[0,T])}\leq \e$  for all $0<h\leq h_0$.
Sending $h\to 0$ we deduce 
$\|u^{\psi_2, \p_n}-u^{\psi_1,\p_n}\|_{L^{\infty}(\R^N\times[0,T])}\leq \e$
for $n$ large enough.  Thus, in particular, we may choose $\de>0$ so small that %denoting 
%by  $\psi_n^{\de}$ the norm whose Wulff shapes   $W^{\psi_n^{\de}}(0,1)$ is 
%    given by $\big(W^{\psi}(0,1)\big)^{\po_n}_{\de}$ (with the notation introduced in \eqref{nota}), 
{letting $\psi_n^{\de}:=\psi + \de\po_n$,}
then
\beq\label{czz}
\|u^{\psi_n, \p_n}-u^{\psi_n^{\de},\p_n}\|_{L^{\infty}(\R^N\times[0,T])}\leq \e\qquad\text{for $n$ large enough.} 
\eeq
Taking $\de$ smaller, if needed, thanks to Theorem~\ref{th:maingenmob}-(ii) we may also impose
\beq\label{czz2}
\|u^{\psi^\de, \p}-u^{\psi,\p}\|_{L^{\infty}(\R^N\times[0,T])}\leq \e\,,
\eeq
where $\psi^\de$ is defined as $\psi_n^\de$, with $\psi_n$ replaced by $\psi$. 
Note now  that by construction $\psi_n^\de$ is $\p_n$-regular with $W^{\psi^\de_n}(0,1)$ satisfying an inner $\p_n$-Wulff shape condition of radius $\de$ (and thus uniformly in $n$) and $\psi_n^\de\to\psi^\de$. By Remark~\ref{rm:stability2} we then have $u^{\psi_n^\de, \p_n}\to u^{\psi^\de, \p}$ locally uniformly. Thus, for any fixed compact set $K\subseteq\R^N$, recalling also \eqref{czz} and \eqref{czz2}, we have
$$
\limsup_{n}\|u^{\psi_n, \p_n}-u^{\psi, \p}\|_{L^{\infty}(K\times [0,T])}\leq 2\e\,.
$$
The conclusion follows by the arbitrariness of $\e$.
\end{proof}

\section{{Concluding remarks}}
We conclude the paper with the following remarks.
\begin{remark}[Comparison with the Giga-\Pozar\ solution]\label{rm:gigapozar}
When $N=3$, $\p$ is purely crystalline and $g\equiv c$, $c\in \R$, the unique level set solution in the sense of Definition~\ref{deflevelset2} coincides with the viscosity solution constructed in \cite{GigaPozar}.  

Let $\p_n$ be a sequence of smooth anisotropies such that $\{\p_n\leq 1\}$ is strictly convex for every $n$ and $\p_n\to \p$. 
 By Lemma~\ref{lem:visco} the unique viscosity level set solution $u_n$ corresponding to the motion 
$$
V=-\psi (\nu)(\kappa_{\p_n}+c)
$$
coincides with the level set solution in the sense of Definition~\ref{deflevelset1}. By Proposition~\ref{remstab}, $u_n\to u$ locally uniformly with $u$ the unique level set solution in the sense of 
Definition~\ref{deflevelset2} corresponding to 
$$
V=-\psi(\nu)(\kappa_{\p}+c)\,.
$$
But thanks to \cite[Theorem 8.9]{GigaPozar}, it turns out that $u$ is also  the viscosity solution in the sense of Giga-\Pozar.
{We expect that this
argument also holds in higher dimension, for the solutions
defined in~\cite{GigaPozar17}.}
\end{remark}

%We conclude the paper with the following observation on phase-field approximations of the crystalline flow in the case constant forcing terms. 

\begin{remark}[Approximation by anisotropic Allen-Cahn equations]\label{rm:allen}
In \cite{GOS} the authors consider the anisotropic Allen-Cahn equation
\begin{equation}\label{eqallen}
v_t = \psi(\nabla v) \left({\rm div}\big( \p(\nabla v)\nabla\p(\nabla v)\big) 
-\frac{1}{\e^2} W'(v)+\frac\lambda\e \,g\right),
\end{equation}
where $\psi,\,\p$ are respectively a smooth mobility and anisotropy, 
$g\equiv c$, $c\in \R$, is a constant forcing term,  
$W$ is a standard double-well potential with zeroes in $\pm 1$, and $\lambda$ is a
suitable constant depending only on $W$.  

Let now $u_0$ be a uniformly continuous function, let $u$ be the corresponding solution
to the level set flow given by Theorem \ref{th:phiregularlevelset}, and let $\gamma:\R\to \R$
be the (unique) solution to $-\gamma''+W'(\gamma)=0$ with $\gamma(0)=0$
and $\lim_{x\to\pm\infty}\gamma(x)=\pm 1$.

In \cite[Theorem 2.2]{GOS} it is shown  that the solutions $u^\e$ to \eqref{eqallen} with initial data
$$
u^\e_0(x) := \gamma\left(\frac 1\e d^{\p^0}_{\{u_0<0\}}(x)\right)
$$
converge as $\e\to 0$ to a family of characteristic functions $\chi_{E(t)}$
with $\partial E(t)\subset \{x:\,u(x,t)=0\}$ for all $t>0$, where $u$ is the level set solution corresponding to \eqref{oee}.
This means that the solutions to \eqref{eqallen} converge to a (generalized)
solution to \eqref{oee} which is contained in the zero-level set of $u$.

In \cite[Theorem 2.4]{GOS} the authors also show that, given two sequences $\psi_n,\,\p_n$ 
of smooth mobilities and anisotropies converging to (possibly nonsmooth) limit 
functions {$\psi,\,\p$, if} the corresponding level set solutions $u_n$,
with initial datum $u_0$, converge to a limit function $u$,
then the corresponding solutions $u^\e_n$ to \eqref{eqallen} converge
as $\e\to 0$ and $n\to \infty$ to a family of characteristic functions $\chi_{E(t)}$
with $\partial E(t)\subset \{x:\,u(x,t)=0\}$ for all $t>0$.

{Thanks to Proposition \ref{remstab}} we know that the solutions $u_n$ do indeed converge
to the unique solution $u$ given by Theorem  \ref{th:maingenmob},
so that the convergence result in \cite[Theorem 2.4]{GOS} applies to our solutions.

Notice also that the solutions $u^\e_n$ converge as $n\to \infty$
to the unique solution $u^\e$ of the (nonsmooth) Allen-Cahn inclusion corresponding to \eqref{eqallen}
(see \cite{BeNo} for a precise definition), so that the convergence result also applies to such 
solutions $u^\e$, thus significantly extending the convergence result in \cite{BeNo}.
\end{remark}

\section*{Acknowledgements}
A.~Chambolle was partially supported by the ANR, programs ANR-12-BS01-0014-01 ``GEOMETRYA'' and ANR-12-BS01-0008-01  ``HJnet''. M.~Novaga was partially supported by an invited professorship of the Ecole Polytechnique, Palaiseau. M~Morini and M.~Ponsiglione were partially supported by the GNAMPA grant  2016 ``Variational methods for nonlocal geometric flows''.

\smallskip

\bibliography{cpam-ccf}

\end{document}